\newtheorem{theorem}[equation]{Theorem}
\newtheorem{lemma}[equation]{Lemma}
\newtheorem{proposition}[equation]{Proposition}
\newtheorem{corollary}[equation]{Corollary}
\newtheorem*{theorem:derhamisomorphism}{Theorem~\ref{T:derhamisomorphism}}
\newtheorem*{theorem:characterization}{Theorem~\ref{T:characterization}}
\theoremstyle{definition}
\newtheorem{definition}[equation]{Definition}
\newtheorem{example}[equation]{Example}
\newtheorem{remark}[equation]{Remark}
\numberwithin{equation}{section}
\newcommand{\FF}{\mathbb{F}}
\newcommand{\ZZ}{\mathbb{Z}}
\newcommand{\TT}{\mathbb{T}}
\newcommand{\CC}{\mathbb{C}}
\newcommand{\zz}{\bold{z}}
\newcommand{\EE}{\bold{E}}
\DeclarePairedDelimiter\floor{\lfloor}{\rfloor}
\DeclareMathOperator{\Der}{Der}
\DeclareMathOperator{\Ker}{Ker}
\DeclareMathOperator{\GL}{GL}
\DeclareMathOperator{\Mat}{Mat}
\DeclareMathOperator{\End}{End}
\DeclareMathOperator{\Eis}{Eis}
\DeclareMathOperator{\Res}{Res}
\DeclareMathOperator{\Id}{Id}
\DeclareMathOperator{\trdeg}{tr.deg}
\newcommand{\dnorm}[1]{\lVert #1 \rVert}
\newcommand{\inorm}[1]{{\lvert #1 \rvert}}
\begin{document}

\title[On Drinfeld modular forms of higher rank and quasi-periodic functions]{On Drinfeld modular forms of higher rank and quasi-periodic functions}

\author{Yen-Tsung Chen}
\address{Department of Mathematics, National Tsing Hua University, Hsinchu City 30042, Taiwan
	R.O.C.}

\email{ytchen.math@gmail.com}

	\author{O\u{g}uz Gezm\.{i}\c{s}}
\address{National Center for Theoretical Sciences, National Taiwan University, Taipei, Taiwan R.O.C.}
\email{gezmis@ncts.ntu.edu.tw}



\date{August 21, 2021}

\thanks{The first author was partially supported by Professor Chieh-Yu Chang’s MOST Grant 107-2628-M-007-002-MY4.}

\thanks{The second author was supported by the MOST Grant 109-2811-M-007-553.}

\keywords{False Eisenstein series,  Drinfeld modular forms, Drinfeld modules}

\subjclass[2010]{Primary 11F52; Secondary 11G09}

\maketitle

\begin{abstract}
	
	In the present paper, we introduce a special function on the Drinfeld period domain $\Omega^{r}$ for $r\geq 2$ which gives the false Eisenstein series of Gekeler when $r=2$. We also study its functional equation and relation with quasi-periodic functions of a Drinfeld module as well as transcendence of its values at CM points.

	
\end{abstract}

\section{Introduction}

 Let $\mathbb{F}_q$ be the finite field with $q$ elements where $q$ is a positive power of a prime $p$ and let $\theta$ be an indeterminate over $\mathbb{F}_q$. We define $A=\mathbb{F}_q[\theta]$ and $A_{+}$ to be the set of monic polynomials of $A$. We let $K=\mathbb{F}_q(\theta)$ and  set $\inorm{\cdot}$ to be the $\infty$-adic norm normalized so that $\inorm{\theta}=q$. Let $K_{\infty}=\mathbb{F}_q((1/\theta))$ be the completion of $K$ with respect to $\inorm{\cdot}$ and $\CC_{\infty}$ be the completion of the algebraic closure of $K_{\infty}$. We also set $\overline{K}$ to be the algebraic closure of $K$ in $\CC_{\infty}$. 

In 1974, Drinfeld \cite{Dri74} introduced ``\textit{elliptic modules}'' nowadays called \textit{Drinfeld modules} as one of the tools for his solution to the Langlands conjectures for $\GL_2$ in the global function field case. He further defined the category of Drinfeld modules of rank $r\in \ZZ_{\geq 1}$ and $A$-lattices that are strongly discrete projective $A$-modules of rank $r$ and showed the equivalence of these two categories (see \S2.1 for details). 

For any integer $r\geq 2$, let $\Omega^r:=\mathbb{P}^{r-1}(\CC_{\infty})\setminus \{K_{\infty}\text{-rational hyperplanes}\}$ be the Drinfeld upper half plane which can be identified as the set of elements $\zz=(z_1,\dots,z_r)^{\intercal}$ whose entries are $K_{\infty}$-linearly independent and normalized so that $z_r=1$. Due  to the construction, entries of each point $\zz\in \Omega^r$ form an $A$-basis for a rank $r$ $A$-lattice $\zz A$ corresponding to a Drinfeld module, which we will denote by $\phi^{\zz}$, via the equivalence of categories (see \S2.1 for further details). 

Each Drinfeld module $\phi^{\zz}$ has the corresponding exponential function $\exp_{\zz A}:\CC_{\infty}\to \CC_{\infty}$ that is also entire. Furthermore for each $1\leq i \leq r-1$, there exists a unique entire function $F_i:\CC_{\infty}\to \CC_{\infty}$, which we call a \textit{quasi-periodic function}, satisfying the functional equation
\begin{equation}\label{E:qpfe}
F_i(\theta z)-\theta F_i(z)=\exp_{\zz A}(z)^{q^i}, \  \ z\in \CC_{\infty}.
\end{equation}
Moreover we call any non-zero element $z'$ of $\zz A$ a \textit{period} and $F_i(z')$ a \textit{quasi-period} of $\phi^{\zz}$. 
The theory of quasi-periodic functions are developed by Anderson, Deligne, Gekeler and Yu in the function field setting and we refer the reader to \S2.2 for further details.

Based on Drinfeld's seminal work \cite{Dri74}, Goss \cite{Gos80} introduced the theory of Drinfeld modular forms in the rank 2 case both algebraically and analytically, which can be also compared with the classical setting of modular forms. He also introduced conditions of holomorphy at infinity which allowed him to prove the finiteness of the  dimension of certain spaces of Drinfeld modular forms. 

Later on Gekeler made several contributions to the theory by revealing further the analogy with the classical setting   \cite{Gek84}, \cite{Gek88}, \cite{Gek89}. In \cite{Gek88}, he defined the positive characteristic analogue of the false Eisenstein series $E:\Omega^2\to \CC_{\infty}$ which is a rigid analytic function and he related the values of $E$ to the values of a quasi-periodic function of a Drinfeld module \cite[Thm. 7.10]{Gek89}. In 2008, Bosser and Pellarin \cite{BP08} defined quasi-modular forms by introducing the analogy with the classical setting and studied the differential algebra structure of them in a detailed way. We note that the function $E$ is not a Drinfeld modular form but in fact is a quasi-modular form \cite[Sec. 1.2]{BP08}.

In the 1980s Goss predicted a suitable generalization of the theory of Drinfeld modular forms to the higher rank case based on the work of Kapranov \cite{Kap88} and Gekeler \cite{Gek86} studied the compactification of Drinfeld modular varieties and introduced a parameter at infinity. Later on Basson, Breuer and Pink \cite{BBP18a} defined the analytic Drinfeld modular forms in arbitrary rank (see also \cite{B14}). In \cite{BBP18b}, they also prove the comparison isomorphism \cite[Thm. 10.9]{BBP18b} between the space of analytic Drinfeld modular forms and the space of algebraic Drinfeld modular forms defined by Pink \cite{Pin13} based on the Satake compactification of Drinfeld moduli spaces introduced by Kapranov \cite{Kap88} and later by H\"{a}berli \cite{Hab21} who generalized the work of Kapranov  (see \S2.3 for details). Through their analysis, they consider several examples of Drinfeld modular forms such as the non-vanishing \textit{discriminant function}  $\Delta_{r}:\Omega^r\to \CC_{\infty}$ whose product formula is discovered by Basson \cite[Thm. 8]{Bas17} and Hamahata \cite{Ham} by different methods (see \S2.3 for more details). We  emphasize that Gekeler contributed to the higher rank setting also in \cite{Gek17}, \cite{Gek18a}, \cite{Gek18b}, \cite{Gek18} and \cite{Gek19} as well as in his unpublished works (see  Example \ref{Ex:1}(iii)). For further information on the history of Drinfeld modular forms, we refer the reader to \cite[Sec. 7]{BB17} and \cite[Sec. 1.1]{Per14}.

The main goal of the present paper is to introduce a generalization of the function $E$ to the higher rank setting and obtain some of its properties by explaining their connection with the known results for the false Eisenstein series of Gekeler.  For this purpose, we consider $\Delta_{r}$ to be a function of variables $z_1,\dots,z_{r-1}$ and define $E_{r}:\Omega^r\to \CC_{\infty}$ by 
\begin{equation}\label{E:int1}
E_r(\zz):=\tilde{\pi}^{-1}\Delta_{r}(\zz)^{-1}\frac{\partial}{\partial z_1}\Delta_{r}(\zz)
\end{equation}
 where $\tilde{\pi}\in \CC_{\infty}^{\times}$ is  defined up to multiplication by any $(q-1)$-st root of unity (see \S2.1  for the explicit definition) and $\partial/\partial z_1$ is the partial derivative with respect to $z_1$.  
 
  Our first result, which will be restated as Theorem \ref{T:EGEK} later, is as follows.
 \begin{theorem}\label{T:1}
 	Let $\zz=(z_1,\dots,z_r)^{\intercal}$ be an element in $\Omega^r$.  The function $E_r$ is rigid analytic on $\Omega^r$. Furthermore, for each $1\leq i \leq r-1$, let $F_i:\CC_{\infty}\to \CC_{\infty}$ be the quasi-periodic function satisfying \eqref{E:qpfe}. Then we have
 		\[
 		E_r(\zz)=\tilde{\pi}^{-1+q+\dots+q^{r-1}}h_r(\zz)\det\begin{pmatrix}
 		F_{1}(z_2)&\dots &F_{r-1}(z_2)\\
 		\vdots & & \vdots \\
 		F_{1}(z_r)&\dots &F_{r-1}(z_r)
 		\end{pmatrix} 
 		\]
 		where $h_r: \Omega^r\to \CC_{\infty}$ is the Drinfeld modular form introduced by Gekeler and defined as an explicit multiple of a fixed $(q-1)$-st root of the discriminant function.
\end{theorem}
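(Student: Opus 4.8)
The plan is to establish the two assertions of Theorem~\ref{T:1} separately, and the bulk of the work is the explicit formula. The rigid analyticity of $E_r$ is the easy part: since $\Delta_r$ is a nowhere-vanishing rigid analytic function on $\Omega^r$ (by the theory recalled in \S2.3), its logarithmic derivative $\Delta_r^{-1}\,\partial\Delta_r/\partial z_1$ is again rigid analytic on $\Omega^r$, and multiplication by the constant $\tilde\pi^{-1}$ changes nothing. So the first sentence reduces to a remark.

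For the identity, the strategy is to compute $\partial\Delta_r/\partial z_1$ through the explicit description of $\Delta_r$ in terms of the lattice $\zz A$. First I would recall that $\Delta_r(\zz)$, up to an explicit power of $\tilde\pi$ and the factor $h_r$, is built from the leading coefficient of the Drinfeld module $\phi^{\zz}$, equivalently from the exponential function $\exp_{\zz A}$ and its coefficients. The key mechanism is that differentiating lattice-periodic quantities with respect to the lattice parameter $z_1$ produces exactly the data measured by the quasi-periodic functions $F_i$: this is the higher-rank incarnation of the classical fact that $\partial/\partial z_1$ acting on functions of $\zz A$ is governed by the de Rham / quasi-periodic pairing. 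Concretely, I would differentiate the defining relations of $\exp_{\zz A}$ (or of $\Delta_r$ as a function of the $z_j$) with respect to $z_1$, use that $\exp_{\zz A}(z_j)=0$ for $j\neq 1$ while $z_1$ is the ``moving'' period, and recognize the resulting expression as a determinant of the values $F_i(z_j)$. The factor $\tilde\pi^{-1+q+\dots+q^{r-1}}h_r(\zz)$ is then bookkeeping: the power of $\tilde\pi$ accumulates from the normalizations in $\exp_{\zz A}$ (each coefficient of the exponential carries a power of $\tilde\pi$, and the $i$-th quasi-period contributes a $q^i$), and $h_r$ appears because $\Delta_r=h_r^{q-1}$ up to scalars, so its logarithmic derivative naturally factors through $h_r$.

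The cleanest route, which I would actually carry out, is to fix $z_2,\dots,z_r$ and view everything as a function of the single variable $z_1$; then compare the left side $E_r(\zz)$ with the proposed right side as rigid analytic functions of $z_1$. Both sides are built from the same Drinfeld module data, and I expect to verify the identity by checking that the two sides satisfy the same first-order ``quasi-periodicity'' functional equation under $z_1\mapsto \theta z_1$ (and more generally under $z_1 \mapsto z_1 + a z_j$ for $a \in A$), inherited on the left from the modular transformation behavior of $\Delta_r$ under $\GL_r(A)$ and on the right from \eqref{E:qpfe}, together with matching a normalization/initial term. The determinant structure on the right is forced because $E_r$ transforms under the stabilizer of the relevant hyperplane configuration the same way the $F_i(z_j)$ do collectively — i.e., the cofactor expansion of the determinant encodes exactly the cocycle relations.

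The main obstacle I anticipate is the second step: cleanly extracting the quasi-periodic functions from $\partial\Delta_r/\partial z_1$. In rank $2$ this is Gekeler's computation \cite[Thm. 7.10]{Gek89}, but in higher rank $\Delta_r$ is only given implicitly (via the Satake/compactification picture or via the product formula of Basson and Hamahata), so I will need to either differentiate the product formula term by term and resum, or work with the coefficient forms $g_1,\dots,g_{r-1}$ and the structural relation between $\Delta_r$, these forms, and the coefficients of $\exp_{\zz A}$. Controlling the convergence of the differentiated series on $\Omega^r$ and identifying the limit with the determinant of $F_i(z_j)$ — in particular getting the exact power of $\tilde\pi$ and the exact scalar relating $\Delta_r$ to $h_r^{q-1}$ right — is where the real care is needed; everything else is formal.
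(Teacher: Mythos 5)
Your first paragraph (rigid analyticity) and your idea of differentiating Basson's product formula are fine and in fact match what the paper does in \S4: applying the logarithmic derivative $dL$ to the product formula gives $E_r(\zz)=\sum_{a\in A_+}a\,u_a(\zz)$, which yields rigid analyticity, $\tilde{\zz}A$-invariance and holomorphy at infinity. But this only produces the $u$-expansion of $E_r$; the actual content of the theorem is the identification of that expansion with the quasi-period determinant, and here your proposal has a genuine gap. Your verification strategy --- fix $z_2,\dots,z_r$, and match the two sides as functions of $z_1$ by checking a ``quasi-periodicity'' functional equation under $z_1\mapsto\theta z_1$ and $z_1\mapsto z_1+az_j$, with the right-hand side's behavior ``inherited from \eqref{E:qpfe}'' --- does not work as stated: \eqref{E:qpfe} governs how $F_i$ varies in its \emph{argument} for the \emph{fixed} lattice $\zz A$, whereas on the right-hand side the $z_1$-dependence enters through the lattice itself (changing $z_1$ changes $\phi^{\zz}$, hence all the $F_i$ and $h_r$ simultaneously), so \eqref{E:qpfe} gives no transformation law in $z_1$. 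Even granting invariance of both sides under $z_1\mapsto z_1+\lambda$, $\lambda\in\tilde{\zz}A$, plus a normalization at infinity, pinning the functions down requires computing the $u$-expansion (or at least the boundary behavior) of the quasi-period determinant --- and that is exactly the hard step you defer to ``where the real care is needed'' without a mechanism. A side remark: your explanation that $h_r$ appears ``because $\Delta_r=h_r^{q-1}$ up to scalars'' is off --- in characteristic $p$ that relation only gives $dL(\Delta_r)=-dL(h_r)$, i.e.\ $h_r^{-1}\partial h_r/\partial z_1$, not a multiplicative factor $h_r$ in front of a determinant.

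For comparison, the paper gets the identity by an entirely different route: it proves a formula for the Tate-algebra-valued Eisenstein series (Theorem \ref{T:main}), namely $\mathcal{E}_{\zz}(1,t)^{\intercal}=-\bigl(\tfrac{1}{\theta-t},0,\dots,0\bigr)\mathcal{F}(\zz,t)^{-1}$, using Pellarin's Perkins-series machinery, the Legendre-type determinant identity $\det\mathcal{F}(\zz,t)=\tilde{\pi}^{-\frac{q^r-1}{q-1}}h_r(\zz)^{-1}\omega(t)$ (Proposition \ref{P:det}, which is where $h_r$ really enters, via cofactors of $\mathcal{F}$), a limit analysis as $\inorm{z_1}=\inorm{\zz}_i\to\infty$, the algebraic independence of the coefficient forms, and the nonexistence of nonzero Drinfeld modular forms of negative weight. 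It then matches the resulting first entry with the $u$-expansion $\sum_{a\in A_+}a(t)u_a(\zz)$ (Corollary \ref{C:EC1}) and specializes at $t=\theta$, using $F^{\phi^{\zz}}_{\delta_i}(w)=s^{(i)}_{\phi^{\zz}}(w;t)_{|t=\theta}$ (Proposition \ref{P:quasiper}) and $\omega^{(1)}(t)_{|t=\theta}=-\tilde{\pi}$, to land on the quasi-period determinant (Theorem \ref{T:EGEK}). None of this apparatus --- or any substitute for it --- is present in your sketch, so the central identity remains unproved.
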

Theorem \ref{T:1} implies that $E_r$ is not a Drinfeld modular form (see \S 5 for details). We also note that $E_2$ is the function $E$ \cite[Sec. 8]{Gek88} and Theorem \ref{T:1} is proved by Gekeler \cite[Thm. 7.10]{Gek89} for the rank two setting by using a detailed analysis on quasi-periodic functions and the function field analogue of the valence formula \cite[(5.14)]{Gek88}. For the higher rank case, we instead follow a different argument to prove Theorem \ref{T:1} motivated by Pellarin's work \cite[Sec. 9]{Pel19}  on the Eisenstein series with values in the Tate algebra and their specialization at explicit points of $A$ (see \S3 for details).

 We call $\zz \in \Omega^r$ a \textit{CM point} if the endomorphism ring $\End(\phi^{\zz})$ of $\phi^{\zz}$ is a free $A$-module of rank $r$ (see \S2.1 for details). Several results concerning the values of Drinfeld modular forms and quasi-modular forms at CM points are obtained by Chang in \cite{Cha11} and \cite{Cha13} for the rank 2 case. In particular, the transcendence  over $\overline{K}$ of the values of $E=E_2$ at CM points is shown in \cite[Thm. 1.2]{Cha13}. 
 
 We have the following theorem on the special values of our function $E_r$. 
\begin{theorem} \label{T:2}
		Let $\mathbf{z}\in\Omega^r$ be a CM point. If $E_r(\mathbf{z})\neq 0$, then it is transcendental over $\overline{K}$.
\end{theorem}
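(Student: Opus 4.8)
The plan is to derive the statement from Theorem~\ref{T:1} together with the transcendence theory of periods and quasi-periods of Drinfeld modules with complex multiplication. First I would observe that $h_r$, being a non-zero scalar multiple of a $(q-1)$-st root of the non-vanishing discriminant $\Delta_r$, has no zero on $\Omega^r$; hence Theorem~\ref{T:1} shows that $E_r(\zz)\neq 0$ is equivalent to the non-vanishing of the quasi-period determinant
\[
\Xi:=\det\bigl(F_i(z_j)\bigr)_{2\le j\le r,\ 1\le i\le r-1},
\]
which I assume henceforth, and that $E_r(\zz)=\tilde{\pi}^{t}h_r(\zz)\,\Xi$ with $t:=-1+q+q^2+\dots+q^{r-1}\ge 1$.

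Next I would pass to an algebraic model. Since $\zz$ is a CM point, $\phi^{\zz}$ has algebraic modular invariants, hence is isomorphic over $\CC_\infty$, say via $x\mapsto cx$ for some $c\in\CC_\infty^{\times}$, to a Drinfeld module $\psi$ defined over $\overline{K}$. Then $cz_1,\dots,cz_r$ is an $A$-basis of the period lattice of $\psi$ (so $c=cz_r$ is itself a period of $\psi$, since $z_r=1$), the associated quasi-periodic functions satisfy $F_i^{\psi}(cz_j)=c^{q^i}F_i(z_j)$, and, by the weight-$(t+2)$ scaling relation for $h_r$ together with the algebraicity of the discriminant of $\psi$, one has $h_r(\zz)\in\overline{K}^{\times}c^{t+2}$. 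Substituting these into the formula of Theorem~\ref{T:1}, so that the powers of $c$ combine to $c^{1}$, yields
\[
E_r(\zz)=\alpha\,\tilde{\pi}^{t}\,c\,\det\bigl(F_i^{\psi}(cz_j)\bigr)_{2\le j\le r,\ 1\le i\le r-1},\qquad \alpha\in\overline{K}^{\times}.
\]
Writing $P$ for the $r\times r$ de Rham period matrix of $\psi$—with rows indexed by $cz_1,\dots,cz_r$, columns by the standard de Rham classes $\delta_0=\mathrm{id},\delta_1,\dots,\delta_{r-1}$, and entries the periods $cz_j$ and the quasi-periods $F_i^{\psi}(cz_j)$—this identifies $E_r(\zz)/\alpha$ with $\tilde{\pi}^{t}$ times the product of the period $c$ and the minor of $P$ obtained by deleting the $cz_1$-row and the $\delta_0$-column.

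The final step, which I expect to be the main obstacle, brings in the complex-multiplication transcendence theory. The ring $\End(\phi^{\zz})\cong\End(\psi)$ is an $A$-order in a degree-$r$ field $\mathbf{K}$; it acts $\overline{K}$-linearly on the de Rham module of $\psi$, and the period lattice is a rank-one module over it. These facts force $P$, after a suitable $\overline{K}$-base change of the de Rham module, into the shape $P=M\,\mathrm{diag}(\vartheta_0,\dots,\vartheta_{r-1})\,N$ with $M,N\in\GL_r(\overline{K})$ and $\vartheta_0,\dots,\vartheta_{r-1}\in\CC_\infty^{\times}$; in particular every entry of $P$ is an $\overline{K}$-linear form in $\vartheta_0,\dots,\vartheta_{r-1}$. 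By Papanikolas' theorem applied to the $t$-motive attached to $\psi$—which is of CM type, so that its motivic Galois group is the torus $\mathrm{Res}_{\mathbf{K}/K}\Gm$ of dimension $r$—the period field $\overline{K}(\vartheta_0,\dots,\vartheta_{r-1})$ has transcendence degree $r$ over $\overline{K}$, so $\vartheta_0,\dots,\vartheta_{r-1}$ are algebraically independent over $\overline{K}$; this is the higher-rank counterpart of the rank-two CM input underlying \cite[Thm.~1.2]{Cha13}. Moreover the de Rham isomorphism (Theorem~\ref{T:derhamisomorphism}), via the attached Legendre-type determinant relation, gives $\det P\in\overline{K}^{\times}\tilde{\pi}$, whence $\tilde{\pi}\in\overline{K}^{\times}\vartheta_0\cdots\vartheta_{r-1}$. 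The second display now exhibits $E_r(\zz)$ as an element of $\overline{K}^{\times}$ times a homogeneous polynomial of degree $r(t+1)>0$ in $\vartheta_0,\dots,\vartheta_{r-1}$, non-zero precisely because $\Xi\neq 0$; since the $\vartheta_k$ are algebraically independent over $\overline{K}$ this forces $E_r(\zz)\notin\overline{K}$, and, $\overline{K}$ being algebraically closed in $\CC_\infty$, $E_r(\zz)$ is transcendental over $\overline{K}$. The crux is precisely this structural input: constructing the $t$-motive of $\psi$, pinning down its motivic Galois group as the torus $\mathrm{Res}_{\mathbf{K}/K}\Gm$ (equivalently, showing the period field attains transcendence degree $r$), and placing $P$ and $\tilde{\pi}$ together inside the resulting purely transcendental extension by means of the de Rham isomorphism. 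Granting this, the transcendence of $E_r(\zz)$ follows at once; for $r=2$ the argument recovers Chang's \cite[Thm.~1.2]{Cha13}.
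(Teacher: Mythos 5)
Your proposal follows essentially the same route as the paper: reduce via Theorem~\ref{T:1} to the quasi-period cofactor, rescale $\phi^{\zz}$ to a CM model over $\overline{K}$ using the algebraicity of the $J$-invariants, invoke \cite[Thm.~1.2.2]{CP12} to get transcendence degree $r$ for the period field, and conclude by a homogeneity/degree count against the Legendre relation $\det P\in\overline{K}^{\times}\tilde{\pi}$ (this is exactly Proposition~\ref{Prop:Transcendence_of_Special_Values} in the paper). Two small remarks: the paper avoids your diagonalization $P=M\,\mathrm{diag}(\vartheta_i)\,N$ (which would need justification, e.g.\ when the CM field is inseparable over $K$) by instead citing \cite[(3.12),(3.13)]{BP02} to write every entry of $P$ as a $\overline{K}$-linear form in the last row $w_r,F^{\phi}_{\delta_1}(w_r),\dots,F^{\phi}_{\delta_{r-1}}(w_r)$, which suffices for the same degree count; and your normalization has a slip --- since $h_r(\zz)\in\overline{K}^{\times}(c/\tilde{\pi})^{t+2}$ rather than $\overline{K}^{\times}c^{t+2}$, the correct identity is $E_r(\zz)=\alpha\,\tilde{\pi}^{-2}c\,\det\bigl(F_i^{\psi}(cz_j)\bigr)$, a homogeneous expression of degree $-r$ rather than $r(t+1)$ in your $\vartheta_k$, though the conclusion (a non-zero value of a homogeneous rational expression of non-zero degree at an algebraically independent point is transcendental) is unaffected.
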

The outline of the paper and the strategy of the proof of Theorem \ref{T:1} and Theorem \ref{T:2} can be described as follows: In \S2, we introduce the necessary notation and background which will be in use throughout the paper. In \S3,  we obtain a formula (Theorem \ref{T:main}) relating the Eisenstein series over the Tate algebra   to the Anderson generating functions by using Pellarin's method described in \cite[Sec. 8]{Pel19} and an analysis on the exponential function of a Drinfeld module. Then in \S4, we discuss some properties of the function $E_r$ such as its $u$-expansion and functional equation which will imply that $E_r$ is a rigid analytic function. In \S5, we obtain Theorem \ref{T:1} by using Theorem \ref{T:main}. Finally in \S6, we give a proof of Theorem \ref{T:2} (stated as Theorem \ref{T:trans} later) by using Theorem \ref{T:1} and an algebraic independence result of Chang and Papanikolas \cite[1.2.2]{CP12} on periods and quasi-periods of Drinfeld modules. 

\begin{remark} Inspired by the results in Theorem \ref{T:1} and Theorem \ref{T:2}, it is natural to consider the function $E_r$ to be \textit{the false Eisenstein series} in the higher rank case. Therefore it would be also interesting to understand and define  quasi-modular forms precisely in the arbitrary rank setting and study their differential algebra structure as was done by Bosser and Pellarin \cite{BP08} in the rank two setting. We hope to tackle this problem in the near future.

\end{remark}

\subsection*{Acknowledgments} The authors are grateful to Chieh-Yu Chang, Andreas Maurischat, Federico Pellarin, Fu-Tsun Wei and Jing Yu for useful suggestions and fruitful discussions. The authors also thank the anonymous referees for many useful comments and suggestions to make the presentation of the results clearer.

\section{Preliminaries and Background}
\subsection{Drinfeld modules and Anderson generating functions} Let $R$ be an $A$-algebra in $\CC_{\infty}$ and $\tau : R \to R$ be the Frobenius map defined by $\tau(x)=x^q$ for all $x\in R$. We define the non-commutative skew polynomial ring $R[\tau ]$  and the formal power series ring $R[[\tau]]$ subject to the relation $\tau x=x^q\tau$ for all $x\in R$. 

Let $r$ be a positive integer. \textit{A Drinfeld module $\phi$ of rank $r$ defined over $R$} is an $\mathbb{F}_q$-algebra homomorphism 
$
\phi \colon A \to R[\tau]
$
given by 
\begin{equation}\label{intro0}
\theta \mapsto \phi_{\theta}:=\theta + \phi_{\theta,1}\tau + \dots + \phi_{\theta,r}\tau^r, \ \  \phi_{\theta,r} \neq 0.
\end{equation}
For each $1\leq i \leq r$, we call $\phi_{\theta,i}$ \textit{the $i$-th coefficient of $\phi$}. We say that the Drinfeld modules $\phi$ and $\varphi$ are \textit{isogenous} if there exists a non-zero element $v\in \CC_{\infty}[\tau]$ such that $v\phi_{\theta}=\varphi_{\theta}v$. We call $v$ an \textit{isogeny} between $\phi$ and $\varphi$. These elements form the morphisms of the category of Drinfeld modules and it can be easily shown that  non-trivial morphisms  occur only between Drinfeld modules of the same rank. Moreover we call $\phi$ and $\varphi$ are \textit{isomorphic} if $v$ is in $\CC_{\infty}^{\times}$.

We further define the endomorphism ring $\End(\phi)$ of $\phi$ to be the subring of $\CC_{\infty}[\tau]$ given by 
$
\End(\phi):=\{v\in \CC_{\infty}[\tau] \ | \ v \phi_{\theta}=\phi_{\theta}v    \}.
$
It forms a commutative ring which is also a free and finitely generated  $A$-module whose rank is less than or equal to $r$ \cite{Dri74}. We say that $\phi$ has \textit{complex multiplication} (CM) if the dimension of $\End(\phi)\otimes_{A} K$ over $K$ is $r$ and furthermore we call $\phi$ a \textit{CM Drinfeld module}.

Letting $c\in \CC_{\infty}^{\times}$, we consider the open ball 
$
\mathbb{D}_\inorm{c}:=\{\alpha\in \CC_{\infty} \ \ | \ \ \inorm{\alpha}<\inorm{c}\}
$ and say that an $A$-module $M$ is \textit{strongly discrete} if the intersection of $M$ with $\mathbb{D}_\inorm{c}$ is finite for any $c\in \CC_{\infty}$. Furthermore we call $M$ \textit{an $A$-lattice} if it is free, finitely generated and strongly discrete. We call two $A$-lattices $\Lambda$ and $\Lambda'$ \textit{isogenous} if there exists an element $c\in \CC_{\infty}^{\times}$ such that $c \Lambda \subset \Lambda'$ where the quotient $\Lambda'/c\Lambda$ is a finite $A$-module. We furthermore call $c$ an \textit{isogeny between $\Lambda$ and $\Lambda'$} and say that $\Lambda$ and $\Lambda'$ are \textit{isomorphic} if $c\Lambda=\Lambda'$. By \cite{Dri74}, there is an equivalence between the category of Drinfeld modules of rank $r$ and the category of $A$-lattices of rank $r$.

Let $\Lambda$ be the $A$-lattice of rank $r$ corresponding to $\phi$. \textit{The exponential series}
$
\exp_{\phi}=\sum_{j\geq 0}\alpha_j\tau^j \in \CC_{\infty}[[ \tau ]]
$
is uniquely  defined so that $\alpha_0=1$ and $\exp_{\phi}a=\phi_{a}\exp_{\phi}$ for all $a\in A$ . It induces an entire function $\exp_{\phi} \colon \CC_{\infty} \to \CC_{\infty}$ defined by 
\[
\exp_{\phi}(x)=x\prod_{\substack{\lambda\in \Lambda\\\lambda\neq 0}}\Big(1-\frac{x}{\lambda}\Big)=\sum_{j\geq 0}\alpha_jx^{q^j}, \ \ x\in \CC_{\infty}.
\]
It is clear from the definition that the $A$-lattice $\Lambda$ is the kernel $\Ker(\exp_{\phi})$ of $\exp_{\phi}$. An example of Drinfeld modules is the Carlitz module $C$ defined by 
$
C_{\theta}=\theta+\tau
$
and $\Ker(\exp_{C})$ is the $A$-lattice of rank one generated by the element $\tilde{\pi}\in \CC_{\infty}^{\times}$ given as 
\[
\tilde{\pi}:=\theta(-\theta)^{1/(q-1)}\prod_{i=1}^{\infty}\Big(1-\theta^{1-q^i}\Big)^{-1}
\]
where $(-\theta)^{1/(q-1)}$ is a fixed choice of $(q-1)$-st root of $-\theta$. 

Considering $t$ as a variable over $\CC_{\infty}$, we define \textit{the Tate algebra} $\TT$ by 
\[
\TT:=\Big\{\sum_{i\geq 0}c_it^i \  \ | \  \ c_i\in \CC_{\infty}, \ \ \inorm{c_i}\to 0 \text{ as } i\to \infty      \Big\}.
\]
One can equip $\TT$ with a Banach algebra structure by defining the Gauss norm $\dnorm{g}$ for any $g=\sum_{i\geq 0}c_it^i\in \TT$ by 
\[
\dnorm{g}:=\sup\{\inorm{c_i} \ \ |i\in \ZZ_{\geq 0}\}.
\]
We also extend the norm $\dnorm{\cdot}$ to  $\Mat_{n\times m}(\TT)$ by simply for any $M=(g_{ik})\in \Mat_{n\times m}(\TT)$ setting $\dnorm{M}:=\sup\{\dnorm{g_{ik}}\}$.

Note that the elements of $\TT$ can be also considered as rigid analytic functions of $t$, in the sense of \cite[Sec. 2.2]{FvdP04}, converging in the closed unit disk. Hence it is important to point out that throughout the paper we will specifically mention when we evaluate functions converging on the closed unit disk or even on a larger domain, that is when $t$ stands for a value in $\CC_{\infty}$ lying in the convergence domain of such function.

For any $g=\sum_{i\geq 0} c_it^i\in \TT$ and $j\in \mathbb{Z}$, we set $g^{(j)}:=\sum_{i\geq 0} c_i^{q^j}t^i$. Similarly for any $M=(g_{ik})\in \Mat_{n\times m}(\TT)$, we define $M^{(j)}:=(g_{ik}^{(j)})\in\Mat_{n\times m}(\TT) $. Furthermore, we define $\TT\{t/c\}$ to be the subset of $\TT$ consisting of elements $g=\sum_{i\geq 0}c_it^i\in \TT$ which are convergent on $\mathbb{D}_{\inorm{c}}$ as a function of $t$. 

Let $z$ be an element in $\CC_{\infty}$. We define \textit{the Anderson generating function} $s_{\phi}(z;t)$ by 
\begin{equation}\label{E:AndGF}
s_{\phi}(z;t):=\sum_{i\geq 0}\exp_{\phi}\Big(\frac{z}{\theta^{i+1}}\Big)t^i\in \CC_{\infty}[[t]].
\end{equation}

For any element $a\in A$, we set $a(t):=a_{|\theta=t}$ where $t$ is either a variable or a value in $\CC_{\infty}$. We obtain the following properties of $s_{\phi}(z;t)$ in the next proposition  whose proof is due to Pellarin (see also \cite[Prop. 3.2]{EGP14}).
\begin{proposition}\cite[Sec. 4.2]{Pel08}  \label{P:Anderson} Let $z$ be an element in $\CC_{\infty}$. Consider the exponential series $
	\exp_{\phi}=\sum_{j\geq 0}\alpha_j\tau^j \in \CC_{\infty}[[ \tau ]]
	$ of $\phi$.
	\begin{itemize}
	\item[(i)] We have an identity which holds in $\TT$:
	\[
	s_{\phi}(z;t)= \sum_{j=0}^{\infty}\frac{\alpha_j z^{q^j}}{\theta^{q^j}-t}\in \TT.
	\]
	\item[(ii)] As a function of $t$, $s_{\phi}(z;t)$ has poles at the points $t= \theta^{q^j}$ for $j\in \ZZ_{\geq 0}$ with residues $\Res_{t=\theta^{q^j}}s_{\phi}(z;t) = -\alpha_j z^{q^j}$. 
	\item[(iii)] For any $n\geqslant 1$, $s_{\phi}^{(n)}(z;t)$ lies in $\TT\{t/\theta^{q^{n-1}}\}$.
	\item[(iv)] Consider the polynomial $\psi_\phi = \phi_{\theta,r} \tau^r + \dots + \phi_{\theta,1} \tau - (t - \theta) \in \CC_{\infty}[\tau]$. For any $z\in \Ker(\exp_{\phi})$, we have $\psi_{\phi}(s_{\phi}(z;t)) = 0$.
	\item[(v)] For any $a\in A$ and $z\in \Ker(\exp_{\phi})$, we have the equality
	$
	s_{\phi}(az;t)=a(t)s_{\phi}(z;t)
	$ in $\TT$.
	\end{itemize}
	
\end{proposition}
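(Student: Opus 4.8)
The plan is to derive all five statements from the two defining properties of the exponential series, namely the coefficient expansion $\exp_\phi(x)=\sum_{j\geq 0}\alpha_j x^{q^j}$ and the functional equation $\exp_\phi\theta=\phi_\theta\exp_\phi$, together with elementary manipulations of power series in $t$. For (i), I would substitute $\exp_\phi(z/\theta^{i+1})=\sum_{j\geq 0}\alpha_j z^{q^j}\theta^{-(i+1)q^j}$ into the definition of $s_\phi(z;t)$ and interchange the order of summation in $i$ and $j$; summing the resulting geometric series $\sum_{i\geq 0}(t/\theta^{q^j})^i$ term by term yields $\sum_{j\geq 0}\alpha_j z^{q^j}/(\theta^{q^j}-t)$. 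The membership $s_\phi(z;t)\in\TT$ is immediate from the definition once one observes that the coefficient $\exp_\phi(z/\theta^{i+1})$ of $t^i$ tends to $0$: since $z/\theta^{i+1}\to 0$ and $\exp_\phi$ has linear leading term, one has $\inorm{\exp_\phi(z/\theta^{i+1})}=\inorm{z}q^{-(i+1)}$ for large $i$, which tends to $0$. The only point requiring care is the legitimacy of the interchange of summation, which I would justify by the uniform decay of the doubly-indexed terms on the closed unit disk.

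For (ii), starting from the partial-fraction expression in (i), each summand $\alpha_j z^{q^j}/(\theta^{q^j}-t)$ has a simple pole at $t=\theta^{q^j}$ with residue $-\alpha_j z^{q^j}$, while every other summand is holomorphic there. I would then check that the series of partial fractions converges locally uniformly on $\CC_\infty\setminus\{\theta^{q^j}\}_{j\geq 0}$, using $\inorm{\alpha_j z^{q^j}}\to 0$ against $\inorm{\theta^{q^j}-t}\to\infty$, so that it furnishes a meromorphic continuation of $s_\phi(z;t)$ with exactly the asserted poles and residues. For (iii), I would apply the Frobenius twist $(\cdot)^{(n)}$ to the expansion in (i); since $(\theta^{q^j}-t)^{-1}$ has $t$-coefficients $\theta^{-(i+1)q^j}$, twisting sends it to $(\theta^{q^{j+n}}-t)^{-1}$, so that $s_\phi^{(n)}(z;t)=\sum_{j\geq 0}\alpha_j^{q^n}z^{q^{j+n}}/(\theta^{q^{j+n}}-t)$. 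Its poles nearest the origin lie at $t=\theta^{q^n}$, which already places $s_\phi^{(n)}(z;t)$ in $\TT\{t/\theta^{q^{n-1}}\}$; alternatively one reads the domain directly off the coefficient estimate $\inorm{\exp_\phi(z/\theta^{i+1})^{q^n}}=\inorm{z}^{q^n}q^{-(i+1)q^n}$.

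The heart of the matter is (iv), from which (v) follows by the same mechanism. I would compute $\sum_{k=1}^r \phi_{\theta,k}s_\phi^{(k)}(z;t)$ coefficientwise: the coefficient of $t^i$ equals $\sum_{k=1}^r\phi_{\theta,k}\exp_\phi(z/\theta^{i+1})^{q^k}=\phi_\theta(\exp_\phi(z/\theta^{i+1}))-\theta\exp_\phi(z/\theta^{i+1})=\exp_\phi(z/\theta^{i})-\theta\exp_\phi(z/\theta^{i+1})$, where the last step uses $\phi_\theta\exp_\phi=\exp_\phi\theta$. Comparing this telescoping expression against the coefficients of $(t-\theta)s_\phi(z;t)$ collapses everything except the constant term, giving the clean identity $\psi_\phi(s_\phi(z;t))=\exp_\phi(z)$; this vanishes precisely when $z\in\Ker(\exp_\phi)$, proving (iv). For (v) I would first treat $a=\theta$: the index shift $s_\phi(\theta z;t)=\sum_{i\geq 0}\exp_\phi(z/\theta^i)t^i=\exp_\phi(z)+t\,s_\phi(z;t)$ again produces $\exp_\phi(z)$ as the only boundary term, so $s_\phi(\theta z;t)=t\,s_\phi(z;t)$ on the kernel. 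Since $\exp_\phi$, and hence $s_\phi(\,\cdot\,;t)$, is $\FF_q$-linear in its first argument and $\Ker(\exp_\phi)$ is an $A$-module, an induction on powers of $\theta$ upgrades this to $s_\phi(\theta^k z;t)=t^k s_\phi(z;t)$ and then, writing $a=\sum_k c_k\theta^k$, to $s_\phi(az;t)=a(t)s_\phi(z;t)$. The main obstacle throughout is essentially bookkeeping: justifying the interchanges of summation and correctly locating the convergence domains after the Frobenius twist. The algebraic identities themselves telescope cleanly once the functional equation $\exp_\phi\theta=\phi_\theta\exp_\phi$ is brought to bear.
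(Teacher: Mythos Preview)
Your proposal is correct and complete in outline; the telescoping computation for (iv) yielding $\psi_\phi(s_\phi(z;t))=\exp_\phi(z)$ and the index-shift argument for (v) are exactly the standard route. Note, however, that the paper does not supply its own proof of this proposition: it is quoted from Pellarin \cite[Sec.~4.2]{Pel08} (see also \cite[Prop.~3.2]{EGP14}), and your argument is essentially the one found in those references, so there is nothing to compare against here.
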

\subsection{Quasi-periodic functions} In this section we introduce the theory of quasi-periodic functions in the function field setting (see also \cite{Gekeler89} and \cite{Yu90}). Let $\phi$ be a Drinfeld module of rank $r$. The $\mathbb{F}_q$-linear map $\eta:A\to \tau\CC_{\infty}[\tau]$ is called \textit{a biderivation} if it satisfies
\[
\eta_{ab}=a\eta_b+\eta_a\phi_b
\]
for all $a,b\in A$. Furthermore, we call a biderivation $\eta^{(m)}:A\to \tau \CC_{\infty}[\tau]$ \textit{ inner} if it is given by $\eta_a^{(m)}=m\phi_a-am$ for some $m\in \CC_{\infty}[\tau]$. If $m\in \tau\CC_{\infty}[\tau]$, then we say that $\eta^{(m)}$ is \textit{strictly inner}. For later use, we set the notation $\delta_0$ for the biderivation $\eta^{(-1)}$ so that it is given by $\delta_0:a\mapsto a-\phi_a$. We let $\Der(\phi)$  to be the set of all biderivations of $\phi$. It has a left $\CC_{\infty}$-module structure given by $(c\cdot \eta)_{\theta}=c\eta_{\theta}$ for $c\in \CC_{\infty}$ and $\eta\in \Der(\phi)$. Let $\Der_{si}(\phi)$ be the set of all strictly inner biderivations  and define the \textit{de Rham module} $H_{DR}^{*}(\phi)$ by $H_{DR}^{*}(\phi):=\Der(\phi)/\Der_{si}(\phi)$. For each $1\leq i \leq r-1$, we set the biderivation $\delta_i\in \Der(\phi)$ given by $\delta_i:\theta\mapsto \tau^i$. By \cite{Gekeler89}, we know that $\delta_0,\dots,\delta_{r-1}$ forms a left $\CC_{\infty}$-module basis for $H_{DR}^{*}(\phi)$. 

Now let $\eta\in \Der(\phi)$. By \cite[(2.2)]{Gekeler89}, we know that there exists a unique entire function $F_{\eta}^{\phi}:\CC_{\infty}\to \CC_{\infty}$, which we call \textit{the quasi-periodic function corresponding to $\eta$}, satisfying the functional equation
\[
F_{\eta}^{\phi}(\theta z)-\theta F_{\eta}^{\phi}(z)=\eta_{\theta}(\exp_{\phi}(z)),  \ \ z\in \CC_{\infty}.
\] 
As an example, the quasi-periodic function $F_{\delta_0}^{\phi}$ corresponding to the biderivation $\delta_0$ is given by $F_{\delta_0}^{\phi}(z)=z-\exp_{\phi}(z)$. 
Let $w_1,\dots,w_r$ be the generators of the $A$-lattice $\Lambda=\Ker(\exp_{\phi})$ corresponding to $\phi$. We call each $w\in \Lambda$ \textit{a period} of $\phi$ and $F^{\phi}_{\delta_j}(w)$  \textit{a quasi-period of $\phi$} for any $1\leq j \leq r-1$.  We also set \textit{the period matrix of $\phi$} to be  
\begin{equation}\label{E:period}
P:=
\begin{pmatrix}
w_1& F^{\phi}_{\delta_1}(w_1) &\dots &F^{\phi}_{\delta_{r-1}}(w_1)\\
\vdots &\vdots & & \vdots \\
w_r& F^{\phi}_{\delta_{1}}(w_r) &\dots &F^{\phi}_{\delta_{r-1}}(w_r)
\end{pmatrix}\in \Mat_{r}(\CC_{\infty}).
\end{equation}

The next proposition  reveals a relation between quasi-periodic functions and Anderson generating functions.
\begin{proposition}\cite[Sec. 4.2]{Pel08}\label{P:quasiper} Let $w$ be an element in $\Lambda$ and $s_{\phi}(w;t)$ be the Anderson generating function as in \eqref{E:AndGF}. Then for each $1\leq i \leq r-1$, we have $F^{\phi}_{\delta_i}(w)=s_{\phi}^{(i)}(w;t)_{|t=\theta}$.
	
\end{proposition}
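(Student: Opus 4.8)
The plan is to prove the identity $F^{\phi}_{\delta_i}(w) = s_{\phi}^{(i)}(w;t)_{|t=\theta}$ by manipulating the series representation of the Anderson generating function provided in Proposition~\ref{P:Anderson}(i) and then checking that the resulting function of $w$ satisfies the defining functional equation of the quasi-periodic function $F^{\phi}_{\delta_i}$. First I would recall from Proposition~\ref{P:Anderson}(i) that, writing $\exp_\phi = \sum_{j\geq 0}\alpha_j\tau^j$, we have the identity in $\TT$
\[
s_{\phi}(w;t) = \sum_{j\geq 0}\frac{\alpha_j w^{q^j}}{\theta^{q^j}-t}.
\]
Applying the $i$-th Frobenius twist $g\mapsto g^{(i)}$ coefficient-wise gives
\[
s_{\phi}^{(i)}(w;t) = \sum_{j\geq 0}\frac{\alpha_j^{q^i} w^{q^{i+j}}}{\theta^{q^{i+j}}-t},
\]
and by Proposition~\ref{P:Anderson}(iii) this lies in $\TT\{t/\theta^{q^{i-1}}\}$, so it is a rigid analytic function of $t$ on a disk strictly containing $\theta$; thus the specialization $t=\theta$ is legitimate and yields the explicit value
\[
s_{\phi}^{(i)}(w;t)_{|t=\theta} = \sum_{j\geq 0}\frac{\alpha_j^{q^i} w^{q^{i+j}}}{\theta^{q^{i+j}}-\theta}.
\]

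Next I would define $G_i(z) := \sum_{j\geq 0}\frac{\alpha_j^{q^i} z^{q^{i+j}}}{\theta^{q^{i+j}}-\theta}$ for $z\in\CC_{\infty}$, verify that this power series in $z$ converges everywhere (hence defines an entire function) by comparing its coefficients with those of $\exp_\phi$, and compute
\[
G_i(\theta z) - \theta G_i(z) = \sum_{j\geq 0}\alpha_j^{q^i}\frac{\theta^{q^{i+j}} - \theta}{\theta^{q^{i+j}}-\theta}\, z^{q^{i+j}} = \sum_{j\geq 0}\alpha_j^{q^i} z^{q^{i+j}} = \sum_{j\geq 0}\bigl(\alpha_j z^{q^j}\bigr)^{q^i} = \exp_\phi(z)^{q^i}.
\]
Since $\delta_i:\theta\mapsto\tau^i$, we have $(\delta_i)_\theta(\exp_\phi(z)) = \tau^i(\exp_\phi(z)) = \exp_\phi(z)^{q^i}$, so $G_i$ satisfies exactly the functional equation $G_i(\theta z)-\theta G_i(z) = (\delta_i)_\theta(\exp_\phi(z))$ that characterizes $F^{\phi}_{\delta_i}$. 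By the uniqueness statement in \cite[(2.2)]{Gekeler89} (cited in \S2.2), any entire solution of this functional equation agrees with $F^{\phi}_{\delta_i}$; one should also note that the uniqueness requires the solution to have no constant term and to be $\FF_q$-linear, both of which are visible from the series for $G_i$ (the lowest term is the $j=0$ term of degree $q^i$, and only $q$-power exponents of $z$ appear). Therefore $F^{\phi}_{\delta_i}(w) = G_i(w) = s_{\phi}^{(i)}(w;t)_{|t=\theta}$, as claimed.

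I do not expect a serious obstacle here; the only points requiring care are the analytic ones. The main subtlety is justifying the evaluation $t=\theta$: one must invoke Proposition~\ref{P:Anderson}(iii) to know that $s_\phi^{(i)}(w;t)$, despite $s_\phi(w;t)$ itself having a pole at $t=\theta$, becomes holomorphic near $\theta$ after twisting (the pole of the $j=0$ term migrates from $t=\theta$ to $t=\theta^{q^i}$), so that the term-by-term specialization of the series is valid. A secondary point is confirming that $G_i$ is genuinely entire in $z$: since $\exp_\phi$ is entire, $|\alpha_j|$ decays fast enough that dividing by $\theta^{q^{i+j}}-\theta$ (whose norm grows) only improves convergence, so the radius of convergence remains infinite. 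With these two routine verifications in place, the argument is a direct computation together with the uniqueness of quasi-periodic functions.
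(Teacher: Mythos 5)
Your proof is correct and follows essentially the same route as the source the paper cites for this proposition (the paper gives no proof of its own, only the reference to Pellarin): twist the partial-fraction expansion of Proposition~\ref{P:Anderson}(i), evaluate at $t=\theta$, and identify the result with $F^{\phi}_{\delta_i}$ through the defining functional equation together with uniqueness, noting that the series for $G_i$ is $\FF_q$-linear with lowest term of degree $q^i$, so no linear term can spoil uniqueness. One small imprecision: for $i=1$, Proposition~\ref{P:Anderson}(iii) only gives convergence on $\inorm{t}<\inorm{\theta}$, a disk that does \emph{not} contain $t=\theta$; the legitimate justification for the evaluation is the one you give parenthetically, namely that after twisting the nearest pole sits at $t=\theta^{q^{i}}$ (visible directly from the twisted partial-fraction series, whose terms at $t=\theta$ tend to zero), so the specialization $t=\theta$ is valid for every $i\geq 1$.
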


\subsection{Drinfeld modular forms} Recall that for any integer $r\geq 2$, we introduce the Drinfeld upper half plane $\Omega^r:=\mathbb{P}^{r-1}(\CC_{\infty})\setminus \{K_{\infty}\text{-rational hyperplanes}\}$ which can be identified as the set of elements $\bold{z}=(z_1,\dots,z_r)^{\intercal}$ consisting of $K_{\infty}$-linearly independent entries and normalized so that $z_r=1$. By \cite{Put87}, it is a rigid analytic space which is simply connected. We also set $\tilde{\zz}=(z_2,\dots,z_r)^{\intercal}$. By abuse of notation, for any $c\in \CC_{\infty}^{\times}$, let $c\zz A$ ($c\tilde{\zz} A$ respectively) be the $A$-lattice generated by the entries of $c\zz$ ($c\tilde{\zz}$  respectively) which are, by definition, $K_{\infty}$-linearly independent. We let $\phi^{\bold{z}}$ to be the Drinfeld module of rank $r$ corresponding to  $\zz A$  defined by 
\begin{equation}\label{E:drinfeld2}
\phi^{\bold{z}}_{\theta}:=\theta+g_1(\bold{z})\tau+\dots +g_{r-1}(\bold{z})\tau^{r-1}+\Delta_r(\bold{z})\tau^r.
\end{equation}

Let $\gamma=(a_{ij})\in \GL_r(A)$. We define the action of $\GL_r(A)$ on $\Omega^r$ by
\begin{equation}\label{E:action}
\gamma \cdot \zz:=\Big(\frac{a_{11}z_1+\dots+ a_{1r}z_r}{a_{r1}z_1+\dots+ a_{rr}z_r},\dots,\frac{a_{(r-1)1}z_1+\dots +a_{(r-1)r}z_r}{a_{r1}z_1+\dots+ a_{rr}z_r},1\Big)^{\intercal}\in \Omega^r
\end{equation}
and set $j(\gamma,\zz):=a_{r1}z_1+\dots+ a_{rr}z_r$ so that $j(\gamma,\zz)$ satisfies certain properties of being a factor of automorphy \cite[Lem. 3.1.3]{B14}. We define  
$
u(\zz):=\exp_{\tilde{\pi}\tilde{\zz}A}(\tilde{\pi}z_1)^{-1}\in \CC_{\infty}^{\times}
$
where $\exp_{\tilde{\pi}\tilde{\zz}A}$ is the exponential function of the Drinfeld module corresponding to the $A$-lattice $\tilde{\pi}\tilde{\zz}A$.
We call a rigid analytic function $f:\Omega^r\to \CC_{\infty}$ \textit{a weak modular form (for $\GL_r(A)$) of weight $k\in \ZZ$ and type $m\in \ZZ/(q-1)\ZZ$} if it satisfies
\[
f(\gamma\cdot \zz)=j(\gamma,\zz)^k\det(\gamma)^{-m}f(\zz),\ \  \gamma\in \GL_r(A), \ \ \zz \in \Omega^r.
\]
For $\zz\in\Omega^r$, we define the function 
\[
|\zz|_i:=\mathrm{inf}\{\inorm{z_1-\alpha}:\alpha=a_2z_2+\dots+a_rz_r, \ \  a_2,\dots,a_r\in K_{\infty}\}.
\]
 Consider the map 
$
\iota:A^{r-1}\to \GL_r(A)$ given by
\[
\iota:(a_2,\dots,a_r)\mapsto\begin{pmatrix}
1 & a_2 & \cdots &a_r\\
 & 1 & &\\
 & & \ddots &\\
 & & & 1
\end{pmatrix}.
\]
Then a function $f:\Omega^r\to\mathbb{C}_\infty$ is called \textit{$\tilde{\zz}A$-invariant} if  $f(\gamma\cdot \zz)=f(\zz)$ for all $\gamma\in \iota(A^{r-1})$, and it is moreover \emph{holomorphic at infinity} if for any $\Tilde{\zz}\in\Omega^{r-1}$, 
we have (cf. \cite[Prop. 3.2.9]{B14} and \cite[Def. 5.12]{BBP18a})
\[
\lim_{|z_1|=|\zz|_i\to\infty}\inorm{f(\zz)}<\infty,~\zz=(z_1,\Tilde{\zz})\in\Omega^r.
\]

By \cite[Prop. 5.4]{BBP18a} we know that for any $\tilde{\zz}A$-invariant rigid analytic function $f:\Omega^r\to \CC_{\infty}$, there exists a uniquely defined holomorphic function $f_n:\Omega^{r-1}\to \CC_{\infty}$ for each $n\in \ZZ$ such that the series
\[
\sum_{n\in \ZZ}f_n(\tilde{\zz})u(\zz)^n
\]
converges to $f(\zz)$ on some neighborhood of infinity and its admissible subsets (see \cite[Def. 4.12]{BBP18a} for the details on a neighborhood of infinity). Such an expansion is called \textit{the $u$-expansion of $f$}. Note that when $r=2$,  $f_n\in\CC_{\infty}$ becomes a constant function for each $n$. Since any $\tilde{\zz}A$-invariant rigid analytic function on $\Omega^r$ can be determined by an expansion as above due to the connectedness of $\Omega^r$, we use a slight abuse of notation and simply write
\[
f(\zz)=\sum_{n\in \ZZ}f_n(\tilde{\zz})u(\zz)^n.
\]

It is easy to show that every weak modular form $f$ of weight $k$ and type $m$ is $\tilde{\zz}A$-invariant and we call $f$ \textit{a Drinfeld modular form (for $\GL_r(A)$) of weight $k$ and type $m$} if $f$ is also holomorphic at infinity. Note that it is equivalent to saying that the function $f_n$ in the $u$-expansion of $f$ is identically zero when $n<0$ by using \cite[Thm. 4.16]{BBP18a}.

\begin{example}\label{Ex:1}
	\begin{itemize}
		\item[(i)] Let $k\in \ZZ_{\geq 1}$ and $\zz=(z_1,\dots,z_r)^{\intercal}\in \Omega^r$. We define \textit{the Eisenstein series} $\Eis_{k}(\zz)$ by
		\[
		\Eis_k(\zz):= \sum{\vphantom{\sum}}'_{a_1,\dots,a_r\in A}\frac{1}{(a_1z_1+\dots+a_rz_r)^k}.
		\]
		where the dash $'$ on the summation denotes that we exclude the case $a_j=0$ for each $1\leq j \leq r$. It is shown in \cite{B14} that $\Eis_k(\zz)$ is a Drinfeld modular form of weight $k$ and type 0 (see also \cite{BB17} and \cite{BBP18}). Furthermore we set 
		\begin{equation}\label{E:expofz}
		\exp_{\phi^{\zz}}:=\sum_{i\geq 0}\alpha_i(\zz)\tau^i\in \CC_{\infty}[[\tau]]
		\end{equation}
		to be the exponential series of $\phi^{\zz}$. We have by \cite[Ex. 3.2]{BB17} that $\alpha_i:\Omega^r\to \CC_{\infty}$ is a Drinfeld modular form of weight $q^i-1$ and type 0 for each $i\in \ZZ_{\geq 0}$.
		\item[(ii)] For each $1\leq i \leq r-1$, the $i$-th coefficient $g_i(\zz)$ of $\phi^{\zz}$ is a Drinfeld modular form of weight $q^i-1$ and type 0 \cite[Prop. 3.4.4]{B14}. Similarly, \textit{the discriminant function} $\Delta_{r}:\Omega^r\to \CC_{\infty}$, determined by the $r$-th coefficient of $\phi^{\zz}$, is a Drinfeld modular form of weight $q^r-1$ and type 0. We also note the following non-trivial relation among the certain Drinfeld modular forms we have defined so far  (see \cite[(3.4)]{BB17}):
	\begin{equation}\label{E:formula}
	(\theta^{q^j}-\theta)\alpha_j(\zz)=g_j(\zz)+\sum_{k=1}^{j-1}g_{k}(\zz)\alpha_{j-k}(\zz)^{q^k},  \ \ 1\leq j\leq r-1.
	\end{equation}
	
		\item[(iii)] It is also possible to construct a Drinfeld modular form of non-zero type for each rank $r$. Let $S$ be a set of representatives of the quotient space $((A/\theta A)^r\setminus \{0\})/\mathbb{F}_q^{\times}$ given by 
		\[
		S:=\theta^{-1}\{(0,\dots,0,1),(0,\dots,0,1,*),\dots,(0,1,*,\dots,*),(1,*,\dots,*)\}.
		\]
		 For any $\mu=(\mu_1,\dots,\mu_r) \in S$, we define 
		\[
		\Eis_{\mu}(\zz):=\sum_{a_1,\dots,a_r\in A}\frac{1}{(a_1+\mu_1)z_1+\dots+(a_r+\mu_r)z_r}.
		\]
		We further set $\beta:=(-\theta)^{1/(q-1)}$  and introduce \textit{the $h$-function of Gekeler}  for rank $r$ by
		\[
		h_r(\zz):=-\beta\prod_{\mu \in S}\Eis_{\mu}(\zz).
		\]
	We note that the definition of $h_r$ is due to Gekeler introduced in his unpublished work which later appears in \cite{Gek17}. Moreover it is a non-vanishing Drinfeld modular form of weight $q^r-1/(q-1)$ and type 1 \cite{BB17} (see also \cite{BBP18}). Furthermore by \cite[Thm. 5.3(2)]{BB17}, we have
	\begin{equation}\label{E:det6}
	\Delta_r(\zz)=\tilde{\pi}^{q^r-1}(-1)^{r-1}h_r(\zz)^{q-1}.
	\end{equation}
	\end{itemize}
\end{example}

\section{Eisenstein series with values in the Tate algebra}
In this section, our goal is to study $\TT$-valued Eisenstein series (see \cite[Sec. 4]{Per14} for further details) and obtain formulas (Theorem \ref{T:main}) relating them to Anderson generating functions and the $h$-function of Gekeler.

Let $\zz=(z_1,\dots,z_r)\in \Omega^r$ and $\phi^{\bold{z}}$ be the Drinfeld module of rank $r$ corresponding to the $A$-lattice $\zz A$  defined as in \eqref{E:drinfeld2}. For each $1\leq i \leq r$, we set $s_i(\zz;t)$ to be the Anderson generating function given by 
$
s_i(\zz;t):=s_{\phi^{\zz}}(z_i;t).
$

We define the matrix
\begin{equation}\label{D:F}
\mathcal{F}(\zz,t):=\begin{pmatrix}
s_1(\zz;t)&\dots &s_1^{(r-1)}(\zz;t)\\
\vdots & & \vdots \\
s_r(\zz;t)&\dots &s_r^{(r-1)}(\zz;t)
\end{pmatrix}\in \Mat_{r}(\TT).
\end{equation}

For each $\bold{z}\in \Omega^r$ and $k\in \ZZ_{\geq 1}$, we consider  \textit{the Eisenstein series $\mathcal{E}_k(\bold{z},t)$ of weight $k$} which converges in $\Mat_{r\times 1}(\TT)$ with respect to Gauss norm $\dnorm{\cdot}$ and is given by 
\[
\mathcal{E}_\zz(k,t):=\begin{pmatrix}
\sum'_{a_1,\dots,a_r\in A}\frac{a_1(t)}{(a_1z_1+\dots+a_rz_r)^k},\dots,
\sum'_{a_1,\dots,a_r\in A}\frac{a_r(t)}{(a_1z_1+\dots+a_rz_r)^k}
\end{pmatrix}^{\intercal}.
\] 
A simple counting argument implies that $\mathcal{E}_\zz(k,t)$ is identically zero if $k\not\equiv 1\pmod{q-1}$. We also introduce the Anderson-Thakur function $\omega(t)$ defined by 
\[
\omega(t):=(-\theta)^{1/(q-1)}\prod_{i=0}^{\infty}\Big(1-\frac{t}{\theta^{q^i}}\Big)^{-1}\in \TT^{\times}.
\]
By \cite[(1.1)]{EGP14}, we know that it is the Anderson generating function $s_{C}(z;t)$ of the Carlitz module evaluated at $z=\tilde{\pi}$.

We are now ready to state our next result whose proof will occupy the rest of the present section.
\begin{theorem}\label{T:main} For the values of $\bold{z}=(z_1,\dots,z_r)^{\intercal}\in\Omega^r$ and $t\in \CC_{\infty}$ with $\inorm{t}\leq 1$, we have 
	\[
\mathcal{E}_\zz(1,t)^{\intercal}=\frac{\tilde{\pi}^{\frac{q^r-1}{q-1}}h_r(\zz)}{(t-\theta)\omega(t)}(C_{11},\dots,C_{r1})
	\]
	where  $C_{i1}$ is the $(i,1)$-cofactor of $\mathcal{F}(\zz,t)$ for $1\leq i \leq r$.
\end{theorem}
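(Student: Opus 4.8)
The plan is to deduce Theorem~\ref{T:main} from two auxiliary facts: a \emph{pairing identity}
\[
\mathcal{F}(\zz,t)^{\intercal}\,\mathcal{E}_{\zz}(1,t)=\frac{1}{t-\theta}\,(1,0,\dots,0)^{\intercal}
\]
and a \emph{determinant identity} $\det\mathcal{F}(\zz,t)=\omega(t)\,\tilde{\pi}^{-(q^{r}-1)/(q-1)}h_{r}(\zz)^{-1}$. Granting both, Theorem~\ref{T:main} follows from Cramer's rule: the pairing identity gives $\mathcal{E}_{\zz}(1,t)_{i}=C_{i1}/\bigl((t-\theta)\det\mathcal{F}(\zz,t)\bigr)$ for each $i$, and substituting the determinant identity produces precisely the asserted formula.

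I would obtain the determinant identity from a $\tau$-difference equation. Writing out $\psi_{\phi^{\zz}}(s_{i}(\zz;t))=0$ from Proposition~\ref{P:Anderson}(iv) for $1\le i\le r$ and using \eqref{E:drinfeld2} gives $\mathcal{F}(\zz,t)^{(1)}=\mathcal{F}(\zz,t)\,\Theta$, where $\Theta\in\Mat_{r}(\TT)$ has $1$'s on the subdiagonal, last column $\bigl((t-\theta)/\Delta_{r}(\zz),\,-g_{1}(\zz)/\Delta_{r}(\zz),\dots,-g_{r-1}(\zz)/\Delta_{r}(\zz)\bigr)^{\intercal}$, and zeros elsewhere; hence $\det\Theta=(-1)^{r+1}(t-\theta)/\Delta_{r}(\zz)$ and $(\det\mathcal{F})^{(1)}=(-1)^{r+1}(t-\theta)\Delta_{r}(\zz)^{-1}\det\mathcal{F}$. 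Combining this with the identity $\omega(t)^{(1)}=(t-\theta)\omega(t)$ and with \eqref{E:det6}, a direct check shows that $\tilde{\pi}^{(q^{r}-1)/(q-1)}h_{r}(\zz)\,\omega(t)^{-1}\det\mathcal{F}(\zz,t)$ is invariant under $(\cdot)^{(1)}$, hence lies in $\FF_q[t]$; comparing the simple pole of $\det\mathcal{F}(\zz,t)$ at $t=\theta$---whose residue is, up to sign, the determinant of the period matrix \eqref{E:period}, by Proposition~\ref{P:Anderson}(ii) and Proposition~\ref{P:quasiper}---with $\Res_{t=\theta}\omega(t)=-\tilde{\pi}$ and invoking the enlarged convergence disks of Proposition~\ref{P:Anderson}(iii), that polynomial is pinned down to $1$.

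The substance of the proof is the pairing identity, which uses Pellarin's method \cite[Sec.~8]{Pel19} and an analysis of $\exp_{\phi^{\zz}}$. Since the entries of $\zz$ form an $A$-basis of $\zz A$, one has $\mathcal{E}_{\zz}(1,t)_{i}=\sum_{0\ne\lambda\in\zz A}a_{i}(\lambda)(t)/\lambda$, where $a_{1}(\lambda),\dots,a_{r}(\lambda)\in A$ are the coordinates of $\lambda$. The $\FF_q$-linearity of $z\mapsto s_{\phi^{\zz}}(z;t)$ and Proposition~\ref{P:Anderson}(v) give $s_{\phi^{\zz}}(\lambda;t)^{(k)}=\sum_{m}a_{m}(\lambda)(t)\,s_{m}^{(k)}(\zz;t)$ for all $k\ge0$; since $\dnorm{a_{m}(\lambda)(t)}\le1$ and the Gauss norm is multiplicative, $\lambda\mapsto s_{\phi^{\zz}}(\lambda;t)$ is bounded on $\zz A$, so $\sum_{0\ne\lambda}s_{\phi^{\zz}}(\lambda;t)/\lambda$ converges in $\TT$ and, interchanging with the finite sum over $m$, equals the first component $\sum_{i}s_{i}(\zz;t)\mathcal{E}_{\zz}(1,t)_{i}$ of the pairing. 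One cannot evaluate this by expanding $s_{\phi^{\zz}}(\lambda;t)$ via Proposition~\ref{P:Anderson}(i) and swapping sums---the resulting series diverge---so instead, following Pellarin, one starts from the partial-fraction expansion $1/\exp_{\phi^{\zz}}(z)=\sum_{\mu\in\zz A}(z-\mu)^{-1}$, inserts the ring homomorphism $a\mapsto a(t)$, and sums over $\zz A$ level by level, reducing---via the tower of exponential functions attached to the flag $\tilde{\zz}A\subset\zz A$ and the vanishing of certain partial sums---to the rank-one (Carlitz) case, i.e.\ to Pellarin's identity $\sum_{a\in A_{+}}a(t)/a=-\tilde{\pi}/\bigl((t-\theta)\omega(t)\bigr)$ together with $\omega(t)=s_{C}(\tilde{\pi};t)$. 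This yields the value $1/(t-\theta)$, and the same scheme, carried out for each of the pairings $\sum_{i}s_{i}^{(k)}(\zz;t)\mathcal{E}_{\zz}(1,t)_{i}$ ($0\le k\le r-1$), gives the full identity.

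The main obstacle is precisely this last step: several ingredients of the lattice-sum manipulation diverge individually and only suitable combinations lie in $\TT$, so Pellarin's summation-by-degree argument has to be executed carefully, controlling the interchange of limits on the disk $\inorm{t}\le1$ and keeping track of the $\tilde{\pi}$-normalizations relating $\zz A$, $\tilde{\pi}\tilde{\zz}A$, and the Carlitz lattice $\tilde{\pi}A$.
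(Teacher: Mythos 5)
Your overall architecture coincides with the paper's: Theorem~\ref{T:main} is deduced from the determinant identity (Proposition~\ref{P:det}) together with the pairing identity $\mathcal{F}(\zz,t)^{\intercal}\mathcal{E}_{\zz}(1,t)=\tfrac{1}{t-\theta}(1,0,\dots,0)^{\intercal}$ (this is \eqref{E:proof5}), and your Cramer's-rule step is exactly how the paper concludes. Your sketch of the determinant identity also follows the paper (the $\tau$-difference equation \eqref{E:det2}, twist-invariance of $\tilde{\pi}^{(q^r-1)/(q-1)}h_r(\zz)\omega(t)^{-1}\det\mathcal{F}(\zz,t)$, residue comparison at $t=\theta$), with one caveat: you propose to pin down the constant by identifying $\Res_{t=\theta}\det\mathcal{F}(\zz,t)$ with $\pm\det P$, but the explicit value of $\det P$ in terms of $\tilde{\pi}$ and $\Delta_r(\zz)$ is the Legendre relation, which is essentially what is being proved; as stated this step is circular. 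The paper avoids it by first identifying $\det\mathcal{F}(\zz,t)$ with the Anderson generating function of the rank-one module $\varphi_{\theta}=\theta+(-1)^{r-1}\Delta_r(\zz)\tau$, whose period $\tilde{\pi}/\nu$ is explicit, and only then reading off the residue from Proposition~\ref{P:Anderson}(ii).

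The genuine gap is in the pairing identity, precisely at the point you yourself flag as the main obstacle. The first component $\sum_i s_i(\zz;t)\mathcal{E}_{\zz}(1,t)_i=\tfrac{1}{t-\theta}$ does come directly out of Pellarin's machinery, being the $Z^0$-coefficient of $\mathcal{H}(Z)=\exp_{\phi^{\zz}}(Z)^{-1}\exp_{\phi^{\zz}}(Z/(\theta-t))$. But the vanishing of the remaining components $\sum_i s_i^{(k)}(\zz;t)\mathcal{E}_{\zz}(1,t)_i$ for $1\le k\le r-1$ is not obtained in the paper (nor plausibly obtainable) by running ``the same summation-by-degree scheme reducing to the Carlitz case'': after the matrix manipulations of \eqref{E:proof2} these components become the quantities $H_k(\zz,t)$ built from $G_{j,r}(\zz,t)$, and their vanishing is proved by a completely different mechanism. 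Namely, Lemma~\ref{L:1} and Corollary~\ref{C:mod} show that $G_{j,r}(\cdot,t)$ is a weak modular form of negative weight $q^{j}-q^{r}$; the limit analysis of \S3.4 (Lemma~\ref{L:u}, Propositions~\ref{P:Eins} and~\ref{P:limit}) shows that $G_{r-1,r}^{(-1)}$ stays bounded as $\inorm{z_1}=\inorm{\zz}_i\to\infty$, hence is holomorphic at infinity; the non-existence of nonzero Drinfeld modular forms of negative weight \cite{BBP18b} then forces it to vanish; and Proposition~\ref{P:mod}, via the algebraic independence of the coefficient forms $\alpha_i$, propagates the vanishing to all $G_{j,r}$. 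None of this appears in your proposal, so the identity you need for $k\ge 1$ is asserted rather than proved; to complete the argument you must either reproduce this modular-forms step or supply a genuinely new convergence argument for those lattice sums.
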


When $r=2$, Theorem \ref{T:main} is stated in several different ways by Pellarin in \cite{Pel11}, \cite[Thm. 8]{Pel12}, \cite[Thm. 9.9]{Pel19} and by Pellarin and Perkins in \cite[Cor. 3.14]{PP18}. For our proof in the higher rank case, we  apply an argument, motivated by \cite[Sec. 9]{Pel19}, which includes an analysis on the limiting behavior of the product of the entries of $\mathcal{E}_{\zz}(1,t)$ with Anderson generating functions.
\begin{remark}
We remark that in an ongoing work, Maurischat and Perkins also obtain formulas concerning the entries of
$\mathcal{E}_{\zz}(k,t)$ when $k$ is any non-negative power of $q$, which are used for further study on the $\TT$-valued Drinfeld modular forms introduced in \cite{Pel12} for the rank two setting and in \cite{Per14} for the higher rank setting.
\end{remark}
 
\subsection{Invertibility of $\mathcal{F}(\zz,t)$}
Our first goal is to show that the matrix $\mathcal{F}(\zz,t)$ is invertible by  calculating its determinant in terms of $h_{r}(\bold{z})$ and the Anderson-Thakur function $\omega(t)$.
\begin{proposition}\label{P:det} We have 
	$
	\det(\mathcal{F}(\zz,t))=\tilde{\pi}^{-\frac{q^r-1}{q-1}}h_r(\bold{z})^{-1}\omega(t).
$
\end{proposition}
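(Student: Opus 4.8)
The plan is to check that $D(t):=\det\mathcal{F}(\zz,t)$ and $D_0(t):=\tilde{\pi}^{-\frac{q^r-1}{q-1}}h_r(\zz)^{-1}\omega(t)$ satisfy the same first-order $\tau$-difference equation, and then to identify the ratio. First I would produce the difference equation for $D$. Each entry $z_i$ of $\zz$ lies in $\Ker(\exp_{\zz A})=\zz A$, so Proposition~\ref{P:Anderson}(iv) applied to $s_i=s_i(\zz;t)=s_{\phi^{\zz}}(z_i;t)$ gives
\[
\Delta_r(\zz)\,s_i^{(r)}+g_{r-1}(\zz)\,s_i^{(r-1)}+\dots+g_1(\zz)\,s_i^{(1)}=(t-\theta)\,s_i ,
\]
i.e.\ the column $(s_i^{(r)})_i$ is a $\CC_{\infty}(t)$-linear combination, with coefficients independent of $i$, of the columns of $\mathcal{F}(\zz,t)$. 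Equivalently $\mathcal{F}(\zz,t)^{(1)}=\mathcal{F}(\zz,t)\,\Theta$, where $\Theta$ is the companion matrix with $1$'s on the sub-diagonal and last column $\big(\tfrac{t-\theta}{\Delta_r(\zz)},-\tfrac{g_1(\zz)}{\Delta_r(\zz)},\dots,-\tfrac{g_{r-1}(\zz)}{\Delta_r(\zz)}\big)^{\intercal}$, so expansion along the first row gives $\det\Theta=(-1)^{r-1}\tfrac{t-\theta}{\Delta_r(\zz)}$. Since $g\mapsto g^{(1)}$ is a ring endomorphism of $\TT$, taking determinants yields $D^{(1)}=(-1)^{r-1}\tfrac{t-\theta}{\Delta_r(\zz)}\,D$.

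On the other side the product formula for $\omega$ gives $\omega^{(1)}=(t-\theta)\,\omega$, and then \eqref{E:det6} makes $D_0^{(1)}=(-1)^{r-1}\tfrac{t-\theta}{\Delta_r(\zz)}\,D_0$ a one-line check. Hence $(D/D_0)^{(1)}=D/D_0$; because $\omega\in\TT^{\times}$ makes $D_0$ a unit of $\TT$, the ratio $D/D_0$ lies in $\TT$, and a $\tau$-invariant element of $\TT$ lies in $\FF_q[t]$. So $D=P\cdot D_0$ with $P\in\FF_q[t]$, and the coefficients of $P$, being locally constant in $\zz$ on the connected space $\Omega^r$, are constant, so $P$ does not depend on $\zz$. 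It remains to show $P\equiv 1$. For the value of $P$ I would specialize at $t=0$: from the defining series, $\mathcal{F}(\zz,0)=\big(e_i^{q^{\,j}}\big)_{1\le i\le r,\,0\le j\le r-1}$ with $e_i:=\exp_{\zz A}(z_i/\theta)$, the Moore matrix of an $\FF_q$-basis $e_1,\dots,e_r$ of the $\theta$-torsion of $\phi^{\zz}$, so $D(0)$ is the product of the $\tfrac{q^r-1}{q-1}$ monic $\FF_q$-linear forms in the $e_i$. One evaluates this product on one side by reading off the coefficients of $X$ and of $X^{q^r}$ in $\phi^{\zz}_{\theta}(X)=\Delta_r(\zz)\,X\!\prod_{0\ne v\in\phi^{\zz}[\theta]}(X-v)$, and on the other side by the partial-fraction identity $\Eis_{\mu}(\zz)=1/\exp_{\zz A}(\mu_1z_1+\dots+\mu_rz_r)$, which makes $\prod_{\mu\in S}\Eis_{\mu}(\zz)$ the inverse of the same Moore determinant and hence relates $D(0)$ to $h_r(\zz)$ via its definition in Example~\ref{Ex:1}(iii); comparing with $D_0(0)=\tilde{\pi}^{-\frac{q^r-1}{q-1}}h_r(\zz)^{-1}(-\theta)^{1/(q-1)}$, using the product formula for $\tilde{\pi}$ and a single choice of $(-\theta)^{1/(q-1)}$ throughout, fixes $P(0)$.

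To see that $P$ is in fact a constant I would use that, by Proposition~\ref{P:Anderson}(ii), the principal part of $\mathcal{F}(\zz,t)$ at each $t=\theta^{q^j}$ is the rank-one matrix with column space $\CC_{\infty}\!\cdot(z_i^{q^j})_i$, so $D$ has only simple poles, located precisely at the (simple) poles $\theta^{q^j}$ of the scalar multiple $D_0$ of $\omega$; thus $P=D/D_0$ extends to an entire function of $t$, whose zeros — being those of the polynomial $P$ — would have to lie in $\oFq$, so it suffices to rule out zeros of $\det\mathcal{F}(\zz,t)$ at such points. An alternative, using that $P$ is independent of $\zz$, is to pass to the cusp $z_1\to\infty$, where $\det\mathcal{F}$ and $D_0$ both factor through the rank-$(r-1)$ data attached to $\tilde{\zz}A$ together with a Carlitz factor, reducing to the rank-$(r-1)$ statement by induction; the case $r=1$ is elementary (there $\mathcal{F}$ is $1\times1$ and the identity amounts to $s_{\phi}(\xi;t)=\Delta^{-1/(q-1)}\omega(t)$ for a rank-$1$ module $\phi$ with leading coefficient $\Delta$ and period $\xi$). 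I expect this last rigidity step — upgrading "$D/D_0\in\FF_q[t]$" to "$D/D_0=1$" — to be the genuine obstacle: the difference equation alone leaves the twist-invariant polynomial factor undetermined, and pinning it down requires real input (a zero-freeness/invertibility statement for $\mathcal{F}(\zz,t)$, or a careful treatment of the $u$-expansion at the cusp), the $t=0$ Moore-determinant computation serving only to fix the value of the resulting constant.
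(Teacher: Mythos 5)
Your first half is correct and is exactly the paper's argument: the companion-matrix difference equation $\mathcal{F}(\zz,t)^{(1)}=\mathcal{F}(\zz,t)\Theta$ with $\det\Theta=(-1)^{r-1}\tfrac{t-\theta}{\Delta_r(\zz)}$, the matching equation for $D_0$ via \eqref{E:det6} and $\omega^{(1)}=(t-\theta)\omega$, and the conclusion that $D/D_0$ is a twist-invariant element of $\TT$, hence a polynomial $P\in\FF_q[t]$, constant in $\zz$ by connectedness of $\Omega^r$. (The paper packages the same reduction by invoking \cite[Prop.~6.2(b)]{EGP14}, which says that any solution in $\TT$ of the relevant rank-one difference equation is an $\FF_q(t)$-multiple of the corresponding Anderson generating function.)

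The gap is the one you yourself flag: you never establish that $P$ is a nonzero constant, nor do you actually evaluate it. Since $P\in\FF_q[t]$ is already entire, your pole analysis at the points $\theta^{q^j}$ buys nothing; the only content of ``$P$ is constant'' is that $\det\mathcal{F}(\zz,t)$ has no zeros in the closed unit disk (all zeros of a nonconstant $P\in\FF_q[t]$ lie in $\oFq$), i.e.\ that $\det\mathcal{F}(\zz,t)\in\TT^{\times}$. This is genuine input, not a formal consequence of the difference equation, and the paper imports it wholesale from \cite[Prop.~6.2.4]{GP19} (together with \cite[Prop.~6.2(b)]{EGP14} and \cite[Prop.~2.5.3]{GP19} to see that the relevant Anderson generating function is itself a unit). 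Without that citation or a proof of it, your argument stops at $D=P\,D_0$ with $P\in\FF_q[t]$ unknown. For the normalization of the resulting constant, the paper does not use your $t=0$ Moore-determinant specialization (which is workable in principle but requires careful bookkeeping of signs, the ordering of the set $S$, and the $(q-1)$-st roots implicit in $\beta$ and $\tilde{\pi}$); instead it identifies $\det\mathcal{F}(\zz,t)$ with $s_{\varphi}(\lambda;t)$ for the rank-one module $\varphi_{\theta}=\theta+(-1)^{r-1}\Delta_r(\zz)\tau$ and a generator $\lambda=\tilde{\pi}/\sqrt[q-1]{(-1)^{r-1}\Delta_r(\zz)}$ of its period lattice, and then compares residues at $t=\theta$ using Proposition~\ref{P:Anderson}(ii) and $\Res_{t=\theta}\omega=-\tilde{\pi}$, which fixes the constant in one line via \eqref{E:det6}. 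So your skeleton is the right one, but the two steps you defer are precisely where the proof's nontrivial content lives.
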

\begin{proof} The proof uses  ideas of Anderson to obtain the Legendre relation and Pellarin in the proof of \cite[Lem. 2.3]{Pel14}. Let $\bold{z}=(z_1,\dots,z_r)^{\intercal}\in \Omega^r$  and consider the Drinfeld module $\phi^{\bold{z}}$ given as in \eqref{E:drinfeld2}.
	By Proposition \ref{P:Anderson}(iv), we have
	\begin{equation}\label{E:det1}
	g_1(\zz)s_i^{(1)}(\zz;t)+\dots  + g_{r-1}(\zz)s_i^{(r-1)}(\zz;t)+\Delta_r(\zz)s_i^{(r)}(\zz;t)=(t-\theta)s_i(\zz;t)
	\end{equation}
	for each $1\leq i \leq r$. Using \eqref{E:det1}, one can get the equality
	\begin{equation}\label{E:det2}
	\tau(\mathcal{F}(\zz,t)^{\intercal})=\Phi_{\zz}\mathcal{F}(\zz,t)^{\intercal}
	\end{equation}
	where $\Phi_{\zz}\in \Mat_r(\TT)$ is given by
	\[
	\Phi_{\zz}:=\begin{pmatrix}
	0&1& & & \\
	& &\ddots & &\\
	& & &\ddots & \\
	& & & & 1\\
	\frac{t-\theta}{\Delta_r(\bold{z})}& -\frac{g_1(\bold{z})}{\Delta_r(\bold{z})}&\dots & \dots & -\frac{g_{r-1}(\bold{z})}{\Delta_r(\bold{z})}
	\end{pmatrix}.
	\]
	For any fixed $\zz\in \Omega^r$, by Proposition \ref{P:Anderson}(iii), we have that $s_i^{(j)}(\zz;t)\in \TT\{t/\theta^{q^{j-1}}\}$ for  $j\in \ZZ_{\geq 0}$. Thus $\det(\mathcal{F}(\zz,t))\in \TT$. Taking the determinants of both sides of \eqref{E:det2}, we obtain 
	$
	\varphi_{\theta}(\det(\mathcal{F})(\zz,t))=t\det(\mathcal{F}(\zz,t))
	$
	where $\varphi$ is the rank one Drinfeld module given by $\varphi_{\theta}=\theta+(-1)^{r-1}\Delta(\bold{z})\tau$. It is  isomorphic to the Carlitz module because if  $\nu:=\sqrt[q-1]{(-1)^{r-1}\Delta_r({\bold{z}})}\in \CC_{\infty}^{\times}$ then we have $\nu\varphi_{\theta}=C_{\theta}\nu$ in $\CC_{\infty}[\tau]$.
	Moreover by \cite[Prop. 6.2(b)]{EGP14}, there exists an element $\xi\in \mathbb{F}_q(t)$ and a generator $\lambda$ of $\Ker(\exp_{\varphi})$ such that $\det(\mathcal{F}(\zz,t))=\xi s_{\varphi}(\lambda;t)$ where $s_{\varphi}(\lambda;t)\in \TT$ is the Anderson generating function of $\varphi$. Indeed, using \cite[Prop. 6.2(b)]{EGP14} and \cite[Prop. 2.5.3]{GP19}, one sees that $s_{\varphi}(\lambda;t)\in \TT^{\times}$. On the other hand, we know by \cite[Prop. 6.2.4]{GP19} that $\det(\mathcal{F}(\zz,t))\in \mathbb{T}^{\times}$ and therefore $\xi \in \mathbb{F}_q^{\times}$. Thus, without loss of generality, we can assume that 
	\begin{equation}\label{E:det3}
	\det(\mathcal{F}(\zz,t))=s_{\varphi}(\lambda;t)
	\end{equation}
	where $\lambda$ is a generator of $\Ker(\exp_{\varphi})$. Taking the residue at $t=\theta$ of both sides of $\eqref{E:det3}$, we obtain by Proposition \ref{P:Anderson}(ii) that
	\begin{equation}\label{E:det4}
	\Res_{t=\theta}\det(\mathcal{F}(\zz,t))=-\lambda=-\frac{\tilde{\pi}}{u}=-\frac{\tilde{\pi}}{\sqrt[q-1]{(-1)^{r-1}\Delta_r({\bold{z}})}}.
	\end{equation}
	Hence combining \eqref{E:det6} with \eqref{E:det4} imply that 
	\begin{equation}\label{E:det5}
	\Res_{t=\theta}\det(\mathcal{F}(\zz,t))=-\tilde{\pi}^{-(q+\dots+q^{r-1})}h_r(\zz)^{-1}.
	\end{equation}
	Consider the function $F(\zz,t):=\det(\mathcal{F}(\zz,t))h_r(\zz)\tilde{\pi}^{1+q+\dots+q^{r-1}}$ for $\zz\in \Omega^r$ and $t\in \mathbb{D}_{\inorm{\theta}}$. For any $\zz\in \Omega^r$, we have by Proposition \ref{P:Anderson}(iii) that $F(\zz,t)\in \TT$. Using \eqref{E:det6} and \eqref{E:det2}, it is easy to see that $F^{(1)}(\zz,t)=(t-\theta)F(\zz,t)$. By \cite[Prop. 6.2(b)]{EGP14}, there exists an element $c(\zz,t)\in \mathbb{F}_q(t)$ such that $F(\zz,t)=c(\zz,t)\omega(t)$. Note that for a fixed element $t\in \mathbb{D}_{\inorm{\theta}}$ which is transcendental over $\mathbb{F}_q$, $F(\zz,t)$ is a rigid analytic function on $\Omega^r$ whose values are non-zero and in a discrete set, namely in $\{c\omega(t)\ \ |c\in \mathbb{F}_q(t)\}$. Thus, the connectedness of $\Omega^r$ implies that $c(\zz,t)$ is a constant and does not depend on $\zz$. We simply set $c(t):=c(\zz,t)$ and show its value in the rest of the proof. Let us fix $\zz\in \Omega^r$. Then by Proposition \ref{P:Anderson}(ii) and the above discussion, we have 
	\begin{align*}
	\Res_{t=\theta}\det(\mathcal{F}(\zz,t))&=\lim_{t\to \theta}(t-\theta)\det(\mathcal{F}(\zz,t))\\
	&=\tilde{\pi}^{-(1+q+\dots+q^{r-1})}h_r(\zz)^{-1}\lim_{t\to \theta}(t-\theta)c(t)\omega(t)\\
	&=-c(\theta)\tilde{\pi}h_r(\zz)^{-1}\tilde{\pi}^{-(1+q+\dots+q^{r-1})}\\
	&=-c(\theta)h_r(\zz)^{-1}\tilde{\pi}^{-(q+\dots+q^{r-1})}.
	\end{align*}
	Now combining with \eqref{E:det5}, we obtain $c(\theta)=1$ and hence the result.
\end{proof}

\subsection{The setting of Pellarin} In this subsection, we briefly explain the ideas of Pellarin \cite[Sec. 8 and 9]{Pel19} and then apply them to our case. Fix $\zz=(z_1,\dots,z_r)^{\intercal}\in \Omega^r$ . We define the column matrix $\omega_{\zz A}\in \Mat_{r\times 1}(\TT)$ given by 
\[
\omega_{\zz A}:=\begin{pmatrix}
s_1(\zz;t)\\
\vdots\\
s_r(\zz;t)
\end{pmatrix}.
\] 
We define \textit{the Perkins' series} $\Psi_{\zz A}(Z)$ introduced by Perkins \cite{Per13} and generalized by Pellarin \cite[Sec. 8]{Pel19}  to 
\[
\Psi_{\zz A}(Z):=\sum_{a_1,\dots,a_r\in A}{\vphantom{\sum}}' \frac{1}{Z-a_1z_1-\dots-a_rz_r}(a_1(t),\dots,a_r(t)), \ \ Z\in \CC_{\infty}.
\]
Since $\zz A$ is strongly discrete, there exists $c\in \CC_{\infty}^{\times}$ such that  $\mathbb{D}_{\inorm{c}}\cap \zz A=\{0\}$. Therefore it can be shown \cite[Lem. 8.3]{Pel19} that for any $Z\in \mathbb{D}_{\inorm{c}}$, we have 
\begin{equation}\label{E:set1}
\Psi_{\zz A}(Z)=-\sum_{\substack{j\geq 1\\j\equiv 1 \mod {q-1} }}Z^{j-1}\mathcal{E}_{\zz}(j,t)^{\intercal}.
\end{equation}
 For any element $f=\sum_{i\geq 0}a_iZ^i\in \TT[[Z]]$ and $j\in \ZZ_{\geq 0}$, we define $\boldsymbol{\tau}^{j}(f):=\sum_{i\geq 0}a_i^{(j)}Z^{iq^j}$. Recall the exponential series of $\phi^{\zz}$ given as in \eqref{E:expofz}. By the abuse of notation, we define the $\TT[[Z]]$-valued function $\exp_{\phi^{\zz}}:\TT[[Z]]\to \TT[[Z]]$ by $\exp_{\phi^{\zz}}(f):= \sum_{i\geq 0}\alpha_i(\zz)\boldsymbol{\tau}^{i}(f)$. We further set 
\[
\mathcal{H}(Z):=\Psi_{\zz A}(Z)\omega_{\zz A}=\exp_{\phi^{\zz}}(Z)^{-1}\exp_{\phi^{\zz}}(Z/(\theta-t))\in \TT[[Z]]
\]
where the last equality follows from \cite[Lem. 8.6]{Pel19}. Considering $Z$ as a variable over $\TT$, for any $n\in \ZZ_{\geq 0}$, we define the $\TT$-linear higher derivative operator $\mathcal{D}_{n}:\TT[[Z]]\to \TT[[Z]]$ by  
\[
\mathcal{D}_n(Z^m):=\begin{cases} \binom{m}{n}Z^{m-n} & \text{ if } m\geq n\\
0 & \text{otherwise}
\end{cases}.
\]
Now for $k\in \ZZ_{\geq 0}$, set
$
\mathcal{H}_k(Z):=(\mathcal{D}_{q^k-1}(\mathcal{H}(Z)))^{(-k)}.
$
Observe that $\mathcal{H}_0(Z)=\mathcal{H}(Z)$. Next, we introduce 
\[
\Omega_{\zz A}:=(\omega_{\zz A},\omega_{\zz A}^{(-1)},\dots,\omega_{\zz A}^{(1-r)})\in \GL_{r}(\TT)
\]
where the invertibility of $\Omega_{\zz A}$ follows from \cite[Prop. 6.2.4]{GP19}. We also set
\[
\mathcal{H}_{\zz A}(Z):=(\mathcal{H}_0(Z),\dots,\mathcal{H}_{r-1}(Z))\in \Mat_{1\times r}(\TT[[Z]]).
\]
Combining \cite[Thm. 8.8]{Pel19} with \eqref{E:set1}, we obtain the next theorem.
\begin{theorem}\cite[Lem. 8.3 and Thm. 8.8]{Pel19}\label{T:Pel}
	We have the following identity:
	\[
	\sum_{\substack{j\geq 1\\j\equiv 1 \mod {(q-1)} }}Z^{j-1}\mathcal{E}_{\zz}(j,t)^{\intercal}=-\mathcal{H}_{\zz A}(Z)\Omega^{-1}_{\zz A}.
	\]
\end{theorem}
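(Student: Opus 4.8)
The plan is to read off the asserted equality as the combination of two facts about the Perkins series $\Psi_{\zz A}(Z)$, both tracing back to Pellarin: its power-series expansion \eqref{E:set1}, and its expression through the hyperderivatives $\mathcal{H}_k(Z)$ and the matrix $\Omega_{\zz A}$. First I would record that \eqref{E:set1} identifies the left-hand side of the statement with $-\Psi_{\zz A}(Z)$. Hence the theorem is equivalent to the vectorial identity
\[
\Psi_{\zz A}(Z)=\mathcal{H}_{\zz A}(Z)\Omega_{\zz A}^{-1},
\]
and it suffices to establish $\mathcal{H}_{\zz A}(Z)=\Psi_{\zz A}(Z)\Omega_{\zz A}$ together with the invertibility of $\Omega_{\zz A}$.

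Since $\Omega_{\zz A}=(\omega_{\zz A},\omega_{\zz A}^{(-1)},\dots,\omega_{\zz A}^{(1-r)})$ and $\mathcal{H}_{\zz A}(Z)=(\mathcal{H}_0(Z),\dots,\mathcal{H}_{r-1}(Z))$, the identity $\mathcal{H}_{\zz A}(Z)=\Psi_{\zz A}(Z)\Omega_{\zz A}$ unwinds column by column into the statements
\[
\mathcal{H}_k(Z)=\Psi_{\zz A}(Z)\,\omega_{\zz A}^{(-k)},\qquad 0\leq k\leq r-1.
\]
The case $k=0$ is exactly the definition $\mathcal{H}_0(Z)=\mathcal{H}(Z)=\Psi_{\zz A}(Z)\omega_{\zz A}$. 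For $1\leq k\leq r-1$ I would invoke \cite[Thm. 8.8]{Pel19}: starting from the product $\mathcal{H}(Z)=\Psi_{\zz A}(Z)\omega_{\zz A}$, equivalently from the closed form $\mathcal{H}(Z)=\exp_{\phi^{\zz}}(Z)^{-1}\exp_{\phi^{\zz}}(Z/(\theta-t))$, one shows that applying the higher derivative $\mathcal{D}_{q^k-1}$ and then the Frobenius twist $(-k)$, which is precisely the definition $\mathcal{H}_k(Z)=(\mathcal{D}_{q^k-1}(\mathcal{H}(Z)))^{(-k)}$, has the net effect of replacing the right-hand factor $\omega_{\zz A}$ by its twist $\omega_{\zz A}^{(-k)}$ while reproducing $\Psi_{\zz A}(Z)$ as the left-hand factor. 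Here the vectorial shape of $\Psi_{\zz A}(Z)$, whose coefficients $a_i(t)$ lie in $\FF_q[t]$, interacts with the $\CC_{\infty}$-coefficients of the Anderson generating functions $s_i(\zz;t)$ collected in $\omega_{\zz A}$.

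To conclude, $\Omega_{\zz A}\in\GL_r(\TT)$ by \cite[Prop. 6.2.4]{GP19}, so the componentwise identities assemble to $\mathcal{H}_{\zz A}(Z)=\Psi_{\zz A}(Z)\Omega_{\zz A}$ and hence to $\Psi_{\zz A}(Z)=\mathcal{H}_{\zz A}(Z)\Omega_{\zz A}^{-1}$. Substituting the expansion \eqref{E:set1} for $\Psi_{\zz A}(Z)$ then gives $-\sum_j Z^{j-1}\mathcal{E}_{\zz}(j,t)^{\intercal}=\mathcal{H}_{\zz A}(Z)\Omega_{\zz A}^{-1}$, which is the claimed equality with the stated sign.

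I expect the main obstacle to be the componentwise identity for $k\geq 1$, that is, the heart of \cite[Thm. 8.8]{Pel19}. The delicate point is that the interplay between the hyperderivative $\mathcal{D}_{q^k-1}$ and the twist $(-k)$ is not transparent from a termwise comparison of the $Z$-coefficients of $\mathcal{H}(Z)$: binomial factors $\binom{m}{q^k-1}$ and shifts in the exponents of $Z$ intervene, and matching them against the twisted functions $\omega_{\zz A}^{(-k)}$ seems to require the closed-form expression for $\mathcal{H}(Z)$ together with the standard rules for hyperdifferentiating $q$-power series in characteristic $p$. Everything else, namely the reduction via \eqref{E:set1}, the assembly of the columns of $\Omega_{\zz A}$, and the inversion of $\Omega_{\zz A}$, is formal once that identity is in hand.
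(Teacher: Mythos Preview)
Your proposal is correct and matches the paper's own treatment: the paper does not give an independent proof but simply records that the identity follows by combining \eqref{E:set1} with \cite[Thm.~8.8]{Pel19}, exactly the two ingredients you isolate. Your unpacking of the column-by-column identity $\mathcal{H}_k(Z)=\Psi_{\zz A}(Z)\,\omega_{\zz A}^{(-k)}$ and the appeal to \cite[Prop.~6.2.4]{GP19} for the invertibility of $\Omega_{\zz A}$ is the intended route, and you correctly locate the substantive content in the $k\geq 1$ case, which the paper defers entirely to \cite{Pel19}.
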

For the rest of this subsection, we apply Theorem \ref{T:Pel} to deduce, under a certain condition, the relation in Theorem \ref{T:main} between the coordinates of $\mathcal{E}_{\zz}(1,t)$ and Anderson generating functions.  Since $\zz$ is fixed, to ease the notation, let us set $\mathcal{F}:=\mathcal{F}(\zz,t)$. Define the following anti-diagonal matrix:
\[
V:=\begin{pmatrix}
0&\dots & \dots& 0 &1\\
0&\dots  &0 & 1 & 0\\
\vdots & &\reflectbox{$\ddots$}  & & \vdots\\
0 &1 & 0&\dots  &0\\
1&0&\dots &\dots &0
\end{pmatrix}\in \Mat_r(\mathbb{F}_q).
\]
Observe that we have
\begin{equation}\label{E:set3}
\Omega_{\zz A}=\mathcal{F}^{(-(r-1))}V.
\end{equation}
On the other hand, taking transpose of both sides of \eqref{E:det2} and applying the automorphism $\tau^{-(r-1)}$ imply that 
\[
\mathcal{F}^{(-(r-1))}=\mathcal{F}({\Phi_{\zz}^{\intercal}}^{(-1)})^{-1}\dots ({\Phi_{\zz}^{\intercal}}^{(-(r-1))})^{-1}.
\]
Thus \eqref{E:set3} yields
$
\Omega_{\zz A}^{-1}=V(\Phi_{\zz}^{\intercal})^{(-r+1)}\dots (\Phi_{\zz}^{\intercal})^{(-1)}\mathcal{F}^{-1}.
$
After taking the transpose and applying Theorem \ref{T:Pel}, we now get
\begin{equation}\label{E:set4}
\sum_{\substack{j\geq 1\\j\equiv 1 \mod {q-1} }}Z^{j-1}\mathcal{E}_{\zz}(j,t)=-(\mathcal{F}^{-1})^{\intercal}\Phi_{\zz}^{(-1)}\dots \Phi_{\zz}^{(-r+1)}V\begin{pmatrix}
\mathcal{H}(Z)\\
\mathcal{H}_1(Z)\\
\vdots\\
\mathcal{H}_{r-1}(Z)
\end{pmatrix}.
\end{equation}

Note that comparing the coefficients of $Z^{j-1}$ on both sides of \eqref{E:set4} introduces a relation between $\mathcal{E}_{\zz}(j,t)$ and Anderson generating functions. Concerning our purposes, we focus on the case $j=1$. For $0\leq i \leq r-1$, we let $D_{i}(\zz,t)\in\TT$ be the $Z^{0}$-th coefficient of the polynomial $\mathcal{H}_i(Z)^{(i)}=\mathcal{D}_{q^i-1}(\mathcal{H}(Z))$ which is indeed, by the definition of higher derivatives, the coefficient of $Z^{q^i-1}$ in the $Z$-expansion of $\mathcal{H}(Z)$. For $r\geq 2$ and $1\leq j \leq r-1$, we set
\[
G_{j,r}(\zz,t):=\frac{(t-\theta)D_{j}(\zz,t)-g_1(\zz)D_{j-1}^{(1)}(\zz,t)-\dots-g_j(\zz)D_0^{(j)}(\zz,t)}{\Delta_r(\zz)}\in \TT\cap \CC_{\infty}(t).
\]
Note that we consider $G_{j,r}(\cdot,t):\Omega^r\to\TT$ as a $\TT$-valued function on $\Omega^r$. By the abuse of notation, for a fixed choice of $t\in \CC_{\infty}$ such that $\inorm{t}\leq 1$, we also consider $G_{j,r}(\cdot,t):\Omega^r\to \CC_{\infty}$ to be a $\mathbb{C}_\infty$-valued function on $\Omega^r$.

Note that \eqref{E:set4} implies 
\begin{equation}\label{E:proof2}
\begin{split}
\mathcal{E}_{\zz}(1,t)&=-(\mathcal{F}^{-1})^{\intercal}\Phi_{\zz}^{(-1)}\dots \Phi_{\zz}^{(-(r-1))}\begin{pmatrix} D_{r-1}(\zz,t)\\ D_{r-2}(\zz,t)^{(1)}\\ \vdots \\D_0(\zz,t)^{(r-1)}
\end{pmatrix}^{(-(r-1))}\\
&=-(\mathcal{F}^{-1})^{\intercal}\Phi_{\zz}^{(-1)}\dots \Phi_{\zz}^{(-(r-2))}\begin{pmatrix} D_{r-2}(\zz,t)\\ D_{r-3}(\zz,t)^{(1)}\\ \vdots \\G_{r-1,r}(\zz,t)^{(-1)}
\end{pmatrix}^{(-(r-2))}\\
&\ \ \vdots\\
&=-(\mathcal{F}^{-1})^{\intercal}\begin{pmatrix} D_{0}(\zz,t)^{(1)}\\ H_0(\zz,t)\\ \vdots\\ H_{r-2}(\zz,t)
\end{pmatrix}^{(-1)}
\end{split}
\end{equation}
where we set $H_0(\zz,t):=G^{(-(r-2))}_{r-1,r}(\zz,t)$ and for $1\leq k \leq r-2$ we define
\[
H_k(\zz,t):=G^{(-(r-(k+2)))}_{r-(k+1),r}(\zz,t)-\sum_{l=1}^{k}\bigg(\frac{g_{r-l}(\zz)}{\Delta_r(\zz)}\bigg)^{(-(r-(k+2)))}H_{k-l}(\zz,t).
\]

\subsection{An analysis on the polynomial $\mathcal{H}(Z)$}
In this subsection, we analyze the coefficients of $\mathcal{H}(Z)=\exp_{\phi^{\zz}}(Z)^{-1}\exp_{\phi^{\zz}}(Z/(\theta-t))$. We set $\mathcal{H}(Z)=\sum_{i\geq 0}c_i(\zz,t)Z^i$ together with the convention $c_i(\zz,t)=0$ for $i<0$ and recall that $\exp_{\phi^{\zz}}=\sum_{i\geq 0}\alpha_i(\zz)\tau^i$. We have the following recurrence relation among coefficients $c_i(\zz,t)\in \TT$ which can be seen by the direct calculation: 
\begin{equation}\label{E:pol1}
c_i(\zz,t)=\begin{cases}\frac{1}{\theta-t} & \text{ if } i=0,\\
-\sum_{j=1}^{l-1}\alpha_j(\zz)c_{i-(q^j-1)}(\zz,t) & \text{ if } q^{l-1}-1<i<q^l-1, \ \  l\in \ZZ_{\geq 1},\\
\frac{\alpha_l(\zz)}{\theta^{q^l}-t}-\sum_{j=1}^{l}\alpha_j(\zz)c_{i-(q^j-1)}(\zz,t) & \text{ if } i=q^l-1, \ \ l \in \ZZ_{\geq 1}.
\end{cases}
\end{equation}

Using \eqref{E:pol1}, we prove the following.
\begin{lemma}\label{L:1} Let $i$ be a non-negative integer.
	\begin{itemize} 
		\item[(i)]  If $i$ is not divisible by $q-1$, then $c_i(\zz,t)=0$.
		\item[(ii)] If $q-1$ divides $i$, then $c_i(\zz,t)$ may be written as a $K(t)$-linear combination of Drinfeld modular forms of weight $i$ and type 0.
	\end{itemize}
\end{lemma}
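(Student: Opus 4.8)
The plan is to argue by strong induction on $i$ using the recurrence \eqref{E:pol1}, exploiting the fact that the modular forms $\alpha_j(\zz)$ have weight $q^j-1$ (which is divisible by $q-1$) and type $0$, while $1/(\theta-t)$ and $\alpha_l(\zz)/(\theta^{q^l}-t)$ supply the $K(t)$-coefficients. First I would record the base case: for $i=0$ we have $c_0(\zz,t)=1/(\theta-t)\in K(t)$, which is trivially a $K(t)$-linear combination of the constant modular form $1$ of weight $0$ and type $0$, establishing both (i) (vacuously, since $q-1\mid 0$) and (ii). Then, assuming the statement holds for all non-negative integers strictly less than $i$, I would split into the three cases of \eqref{E:pol1}.

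For the middle case $q^{l-1}-1<i<q^l-1$, every term on the right-hand side is $\alpha_j(\zz)c_{i-(q^j-1)}(\zz,t)$ with $1\le j\le l-1$. If $q-1\nmid i$, then since $q-1\mid q^j-1$ we get $q-1\nmid i-(q^j-1)$, so by the inductive hypothesis each $c_{i-(q^j-1)}(\zz,t)=0$, hence $c_i(\zz,t)=0$; this gives (i). If $q-1\mid i$, then $q-1\mid i-(q^j-1)$, and by induction $c_{i-(q^j-1)}(\zz,t)$ is a $K(t)$-linear combination of Drinfeld modular forms of weight $i-(q^j-1)$ and type $0$. Multiplying by the weight-$(q^j-1)$, type-$0$ form $\alpha_j(\zz)$ produces a $K(t)$-linear combination of Drinfeld modular forms of weight exactly $i$ and type $0$ (using that products of Drinfeld modular forms add weights and types, and that $K(t)$-linear combinations of such forms of a fixed weight and type remain such); summing over $j$ and negating preserves this, so $c_i(\zz,t)$ has the desired shape. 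The case $i=q^l-1$ is identical except for the extra term $\alpha_l(\zz)/(\theta^{q^l}-t)$: here $\alpha_l(\zz)$ is a modular form of weight $q^l-1=i$ and type $0$, and $1/(\theta^{q^l}-t)\in K(t)$, so this term too is a $K(t)$-multiple of a modular form of weight $i$ and type $0$; the remaining sum is handled exactly as in the middle case since for $i=q^l-1$ we have $q-1\mid i$.

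The only genuinely delicate point — more bookkeeping than obstacle — is to make precise what "a $K(t)$-linear combination of Drinfeld modular forms of weight $i$ and type $0$" means and that this class is closed under the two operations used: (a) multiplication by $\alpha_j(\zz)$, a fixed modular form of weight $q^j-1$ and type $0$, which by \cite{BB17} (cf.\ Example \ref{Ex:1}(i)) shifts weight by $q^j-1$ and leaves type $0$; and (b) $K(t)$-linear combination within a fixed weight. One must also check the weights line up: in the recurrence, a term $\alpha_j(\zz)c_{i-(q^j-1)}(\zz,t)$ contributes weight $(q^j-1)+(i-(q^j-1))=i$, so all contributions land in the single weight $i$, as required. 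I would state these closure properties as an elementary observation (the $\CC_\infty$-span of Drinfeld modular forms of a fixed weight $k$ and type $m$ is a $\CC_\infty$-vector space, and modular forms form a graded ring under multiplication), and then the induction goes through mechanically. The strong form of induction is essential because the indices $i-(q^j-1)$ can be much smaller than $i-1$.
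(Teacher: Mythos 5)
Your proof is correct and follows essentially the same route as the paper: both arguments induct via the recurrence \eqref{E:pol1}, using that $q-1$ divides $q^j-1$ for part (i) and that multiplying by the weight-$(q^j-1)$, type-$0$ forms $\alpha_j(\zz)$ (with $K(t)$-coefficients such as $1/(\theta^{q^l}-t)$) keeps every contribution in weight exactly $i$ for part (ii). The only cosmetic difference is that you run a single strong induction on $i$ treating both parts at once, whereas the paper inducts on $k$ after writing $i=j+k(q-1)$ (resp.\ $i=k(q-1)$) and treats the two parts separately; this changes nothing of substance.
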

\begin{proof} To prove the first part, let $i=j+k(q-1)$ for some $0<j<q-1$ and $k\in \ZZ_{\geq 0}$. We do induction on $k$. If $k=0$, then \eqref{E:pol1} implies that $c_i=0$. Assume that the statement in the first part of the lemma holds for $k\in \ZZ_{\geq 0}$. Now consider $i=j+(k+1)(q-1)$. By \eqref{E:pol1}, we have that 
	\[
	c_i(\zz,t)=-\alpha_1(\zz)c_{j+k(q-1)}(\zz,t)-\alpha_2(\zz)c_{j+(k-q)(q-1)}(\zz,t)-\dots-\alpha_{l-1}(\zz)c_{k+(k-q-\dots-q^{l-2})(q-1)}(\zz,t)
	\]
	where $l\in \ZZ_{\geq 1}$ such that $q^{l-1}<i<q^{l}-1$. By the induction hypothesis, we deduce that $c_i(\zz,t)=0$. To prove the second part, we set $i=k(q-1)$ for some $k\in \ZZ_{\geq 0}$. We again do induction on $k$. If $k=0$, then $c_0(\zz,t)=\frac{1}{\theta-t}$ and we are done. Assume that the statement in the second part holds for $k\in \ZZ_{\geq 0}$. Let $i=(k+1)(q-1)$.
	
	\textit{Case 1:} Assume $i=q^l-1$ for some positive integer $l$. Then by \eqref{E:pol1}, we have 
	\[
	c_i(\zz,t)=\frac{\alpha_l(\zz)}{\theta^{q^l}-t}-\alpha_1(\zz)c_{(q^l-1)-(q-1)}(\zz,t)-\dots-\alpha_l(\zz)c_0(\zz,t).
	\] 
By Example \ref{Ex:1}(ii) and the induction hypothesis, each term above has weight
$q^l-1=i$
	which concludes the proof of the first case.
	
	\textit{Case 2:} Assume that there exists $l\in \ZZ_{\geq 1}$ such that $q^{l-1}-1<i<q^l-1$. The proof of Case 2 is similar to Case 1 which follows from applying the induction on the values of $k$ and observing the following calculation
	\[
	q^m-1+(k+1)(q-1)-(q^m-1)=(k+1)(q-1)=i
	\]
	for each $1\leq m \leq l$. We leave the details to the reader.
\end{proof}
A simple consequence of Lemma \ref{L:1} can be stated as follows.
\begin{corollary}\label{C:mod} Let $r\geq 2$ be an integer and $1\leq j \leq r-1$. For any $t\in \CC_{\infty}$ with $\inorm{t}\leq 1$,      $G_{j,r}(\cdot,t):\Omega^r\to \CC_{\infty}$ is a weak modular form of weight $q^j-q^{r}$ and type 0.
\end{corollary}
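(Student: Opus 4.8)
The plan is to deduce the modularity of $G_{j,r}(\cdot,t)$ directly from Lemma \ref{L:1}(ii) together with the known modular behaviour of the coefficients $g_i$ and $\Delta_r$ of $\phi^{\zz}$. Recall that $G_{j,r}(\zz,t)$ was defined as
\[
G_{j,r}(\zz,t)=\frac{(t-\theta)D_j(\zz,t)-g_1(\zz)D_{j-1}^{(1)}(\zz,t)-\dots-g_j(\zz)D_0^{(j)}(\zz,t)}{\Delta_r(\zz)},
\]
and that $D_i(\zz,t)$ is the coefficient of $Z^{q^i-1}$ in the $Z$-expansion of $\mathcal{H}(Z)$, i.e. $D_i(\zz,t)=c_{q^i-1}(\zz,t)$ in the notation of \S3.3. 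First I would invoke Lemma \ref{L:1}(ii): since $q^i-1$ is divisible by $q-1$, each $D_i(\zz,t)$ is a $K(t)$-linear combination of Drinfeld modular forms of weight $q^i-1$ and type $0$; hence, for a fixed $t\in\CC_\infty$ with $\inorm{t}\le 1$, the specialized function $D_i(\cdot,t):\Omega^r\to\CC_\infty$ is a $\CC_\infty$-linear combination of weak modular forms of weight $q^i-1$ and type $0$, so it is itself a weak modular form of weight $q^i-1$ and type $0$ (here I use that $t$ lies in the closed unit disk, so that the $K(t)$-coefficients specialize to well-defined elements of $\CC_\infty$).

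Next I would track weights through the definition of $G_{j,r}$. The Frobenius twist $D_{j-l}^{(l)}(\cdot,t)$ of a weak modular form of weight $q^{j-l}-1$ is a weak modular form of weight $q^l(q^{j-l}-1)=q^j-q^l$ and type $0$, because the automorphy factor $j(\gamma,\zz)$ is $K$-rational so the modular transformation law is preserved by $\tau^l$ up to raising the automorphy factor to the $q^l$-th power. By Example \ref{Ex:1}(ii), $g_l(\cdot)$ is a Drinfeld modular form of weight $q^l-1$ and type $0$, so the product $g_l(\zz)D_{j-l}^{(l)}(\zz,t)$ has weight $(q^l-1)+(q^j-q^l)=q^j-1$ and type $0$; likewise the term $(t-\theta)D_j(\zz,t)$ has weight $q^j-1$ and type $0$ (multiplication by the scalar $t-\theta\in\CC_\infty$ does not change the weight or type). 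Therefore the numerator of $G_{j,r}$ is a weak modular form of weight $q^j-1$ and type $0$. Finally, $\Delta_r(\cdot)$ is a non-vanishing Drinfeld modular form of weight $q^r-1$ and type $0$, so dividing by it subtracts $q^r-1$ from the weight and leaves the type unchanged, giving weight $(q^j-1)-(q^r-1)=q^j-q^r$ and type $0$. It remains to note that $G_{j,r}(\cdot,t)$ is rigid analytic on $\Omega^r$: this is clear since $\Delta_r$ is non-vanishing and the numerator, being a finite combination of rigid analytic functions (the $c_i(\zz,t)$ are rigid analytic in $\zz$ by \eqref{E:pol1}, as the $\alpha_i$ are entire and $\theta^{q^l}-t\ne 0$ for $\inorm{t}\le 1$), is rigid analytic.

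I do not expect any serious obstacle here; the argument is a bookkeeping of weights and types, and the only point requiring a moment of care is the interaction of the Frobenius twist $(\cdot)^{(l)}$ with the automorphy factor, which works because $j(\gamma,\zz)$ has coefficients in $A\subset\CC_\infty$ and is therefore fixed by the relevant twisting. One should also make explicit that specialization at $t$ with $\inorm{t}\le 1$ is harmless because the coefficients appearing in Lemma \ref{L:1}(ii) lie in $K(t)$ and the poles of these rational functions, coming from the factors $\theta^{q^l}-t$ in \eqref{E:pol1}, lie outside the closed unit disk. With these remarks the corollary follows immediately.
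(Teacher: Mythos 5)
Your argument is correct and follows essentially the same route as the paper: Lemma~\ref{L:1} gives that each $D_k(\cdot,t)$ is (a linear combination of) modular forms of weight $q^k-1$ and type $0$, and the rest is the same weight-and-type bookkeeping through the twists, the coefficients of $\phi^{\zz}$, and division by the non-vanishing $\Delta_r$ of weight $q^r-1$. The only cosmetic difference is that you invoke Example~\ref{Ex:1}(ii) directly for the modularity of the $g_l$, where the paper cites Example~\ref{Ex:1}(i) together with \eqref{E:formula}; the content is the same, and your explicit remarks about specialization at $\inorm{t}\leq 1$ and the effect of twisting on the automorphy factor are exactly the points the paper leaves implicit.
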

\begin{proof}
	For any $0\leq k \leq j$, note that $D_{k}(\zz,t)$ is the  $Z^{q^k-1}$-st coefficient of $\mathcal{H}(Z)$. Therefore, by Lemma \ref{L:1}, it is a Drinfeld modular form of weight $q^k-1$ and type 0. The corollary now follows from Example \ref{Ex:1}(i) and \eqref{E:formula}. 
\end{proof}

    By \eqref{E:formula} and the formulas in \eqref{E:pol1}, one can see that the $\TT$-valued function $\Delta_{r}(\cdot)G_{j,r}(\cdot,t)$ can be written as a polynomial of rigid analytic functions $\alpha_1,\dots,\alpha_j:\Omega^r\to \CC_{\infty}$ with coefficients in $K(t)\cap\TT$. Hence one can observe that such polynomial only depends on $j$ and is independent of $r$.
    To be more precise, let us consider $r>r'>j\geq 1$. Let $\zz'\in \Omega^{r'}$ and for each $1\leq i \leq r'-1$, $\alpha_i':\Omega^{r'}\to \CC_{\infty}$ be the Drinfeld modular form defined as in Example \ref{Ex:1}(i) considering the Drinfeld module $\phi^{\zz'}$ corresponding to the $A$-lattice $\zz'A$. We also let $\Delta_{r'}:\Omega^{r'}\to \CC_{\infty}$ be the discriminant function defined as in Example \ref{Ex:1}(ii). Then by \eqref{E:formula} and the formulas in \eqref{E:pol1}, there is a polynomial $f_j\in (K(t)\cap\TT)[X_1,\dots,X_j]$ such that
    \[
        \Delta_{r}(\cdot)G_{j,r}(\cdot,t)=f_j(\alpha_1,\dots,\alpha_j)
    \]
    and 
    \[
        \Delta_{r'}(\cdot)G_{j,r'}(\cdot,t)=f_j(\alpha_1',\dots,\alpha_j')
    \]
    holds simultaneously as $\TT$-valued functions on $\Omega^r$ and $\Omega^{r'}$ respectively. Indeed, this result comes from the fact that the identity \eqref{E:formula} is independent of $r$ and $r'$ whenever $r>r'>j\geq 1$. 
    
    In what follows, we demonstrate how to reduce the task of proving Theorem~\ref{T:main} to the lower rank case.

\begin{proposition}\label{P:mod} 
    Let $r\geq 2$. If $G_{j,j+1}(\cdot,t)$ vanishes identically as a $\TT$-valued function on $\Omega^{j+1}$ for any $1\leq j\leq r-1$, then  $G_{j,r}(\cdot,t)$ also vanishes identically as a $\TT$-valued function on $\Omega^{r}$ for any $1\leq j\leq r-1$. In particular, we have
    \[
		\mathcal{E}_{\zz}(1,t)^{\intercal}=-\Big(
		\frac{1}{\theta-t}, 0,\dots,0\Big)\mathcal{F}^{-1}.
	\]
\end{proposition}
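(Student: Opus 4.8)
The plan is to use the key identity isolated in the discussion preceding the statement: for every rank $\rho>j\geq 1$ one has $\Delta_{\rho}(\cdot)\,G_{j,\rho}(\cdot,t)=f_j(\alpha_1,\dots,\alpha_j)$ as $\TT$-valued functions on $\Omega^{\rho}$, with \emph{one and the same} polynomial $f_j\in(K(t)\cap\TT)[X_1,\dots,X_j]$, where $\alpha_1,\dots,\alpha_j$ denote the exponential-coefficient forms of the universal rank-$\rho$ Drinfeld module on $\Omega^{\rho}$. Fix $j$ with $1\leq j\leq r-1$; if $j+1=r$ the conclusion \emph{is} the hypothesis, so assume $r>j+1$. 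It then suffices to show that $f_j$ is the zero polynomial, since this immediately yields $\Delta_r(\cdot)\,G_{j,r}(\cdot,t)=f_j(\alpha_1,\dots,\alpha_j)\equiv 0$ on $\Omega^r$, and $\Delta_r$ is nowhere vanishing there.

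To prove $f_j=0$ I would use the hypothesis at rank $j+1$. Since $\Delta_{j+1}$ is nowhere vanishing on $\Omega^{j+1}$, the identical vanishing of $G_{j,j+1}(\cdot,t)$ gives $f_j(\alpha_1',\dots,\alpha_j')\equiv 0$ as a $\TT$-valued function on $\Omega^{j+1}$, where $\alpha_1',\dots,\alpha_j'$ are the exponential-coefficient forms of $\phi^{\zz'}$. By the triangular relations \eqref{E:formula} (read with $1\leq i\leq j$ at rank $j+1$), the functions $\alpha_1',\dots,\alpha_j'$ generate over $\CC_\infty$ the same field as the coefficient forms $g_1,\dots,g_j$ of $\phi^{\zz'}$, and these $j$ coefficient forms are algebraically independent over $\CC_\infty$ as rigid analytic functions on the $j$-dimensional space $\Omega^{j+1}$. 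Writing each coefficient of $f_j$, which lies in $K(t)\cap\TT$, as a convergent power series in $t$ over $\CC_\infty$ and collecting the identity $f_j(\alpha_1',\dots,\alpha_j')\equiv 0$ in $\TT$ according to powers of $t$, one obtains for each power of $t$ a polynomial relation over $\CC_\infty$ among $\alpha_1',\dots,\alpha_j'$; algebraic independence forces all of them to be trivial, so every coefficient of $f_j$ vanishes and $f_j=0$. Doing this for each $j$ establishes $G_{j,r}(\cdot,t)\equiv 0$ on $\Omega^r$ for all $1\leq j\leq r-1$.

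For the ``in particular'' claim I would substitute this vanishing into the last line of \eqref{E:proof2}. An easy induction on $k$ shows that the auxiliary functions all vanish: $H_0(\zz,t)=G_{r-1,r}^{(-(r-2))}(\zz,t)=0$ because $G_{r-1,r}(\cdot,t)\equiv 0$, and for $1\leq k\leq r-2$ the defining recursion gives $H_k(\zz,t)=0$ once $G_{r-(k+1),r}(\cdot,t)\equiv 0$ and $H_0=\dots=H_{k-1}=0$. Moreover $D_0(\zz,t)=c_0(\zz,t)=\tfrac{1}{\theta-t}$ by \eqref{E:pol1}, so after the outer Frobenius twist the column on the right of \eqref{E:proof2} collapses to $\big(\tfrac{1}{\theta-t},0,\dots,0\big)^{\intercal}$; transposing the resulting identity $\mathcal{E}_\zz(1,t)=-(\mathcal{F}^{-1})^{\intercal}\big(\tfrac{1}{\theta-t},0,\dots,0\big)^{\intercal}$ gives $\mathcal{E}_\zz(1,t)^{\intercal}=-\big(\tfrac{1}{\theta-t},0,\dots,0\big)\mathcal{F}^{-1}$.

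The genuine obstacle is the algebraic independence input. For $r=2$ it is merely the non-constancy of $g_1$, but for general rank it requires that the coefficient forms of the universal Drinfeld module on $\Omega^{j+1}$ form a transcendence basis of the field they generate over $\CC_\infty$; I would obtain this either from the structure theory of Drinfeld modular forms (where $g_1,\dots,g_{r-1},h_r$ behave as free polynomial generators) or directly, by an induction on the rank using the behaviour of $u$-expansions under the boundary degeneration from rank $\rho$ to rank $\rho-1$ together with the dimension count $\dim\Omega^{j+1}=j$. Everything else is bookkeeping with Frobenius twists and the nowhere-vanishing of the discriminant.
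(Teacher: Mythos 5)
Your argument is correct and takes essentially the same route as the paper: reduce to rank $j+1$ via the rank-independent polynomial $f_j$, kill $f_j$ using algebraic independence of the coefficient forms $\alpha_1',\dots,\alpha_j'$ over $\CC_\infty$ (the ``structure theory'' input you flag is exactly what the paper quotes from \cite[Thm.~17.5(a)]{BBP18}), and then substitute the resulting vanishing of the $G_{j,r}$ and hence of the $H_k$, together with $D_0(\zz,t)=\tfrac{1}{\theta-t}$, into \eqref{E:proof2}. The only cosmetic difference is that you separate the identity $f_j(\alpha_1',\dots,\alpha_j')\equiv 0$ into coefficients of powers of $t$, whereas the paper evaluates at each $t\in\CC_\infty$ with $\inorm{t}\leq 1$ and uses that the coefficients of $f_j$ are rational functions of $t$; both steps are immediate and equivalent.
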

\begin{proof} 
    Considering the $\TT$-valued function $\Delta_{j+1}(\cdot)G_{j,j+1}(\cdot,t)$ on $\Omega^{j+1}$ and using the above discussion, we can let $f_j\in (K(t)\cap\TT)[X_1,\dots,X_j]$ be such that
    \begin{equation}\label{Eq:Relations_among_alpha_i}
        \Delta_{j+1}(\cdot)G_{j,j+1}(\cdot,t)=f_j(\alpha_1',\dots,\alpha_j')
    \end{equation}
    where $\alpha_i':\Omega^{j+1}\to \CC_{\infty}$ is the Drinfeld modular form defined as in Example \ref{Ex:1}(i) for each $1\leq i\leq j+1$ and $\Delta_{j+1}:\Omega^{j+1}\to \CC_{\infty}$ is the discriminant function defined as in Example \ref{Ex:1}(ii). Let $t\in\mathbb{C}_\infty$ with $\inorm{t}\leq 1$. Since $\Delta_{j+1}(\cdot)$ is non-vanishing on $\Omega^{j+1}$, by evaluating \eqref{Eq:Relations_among_alpha_i} at $t$ we get an algebraic relation among $\mathbb{C}_\infty$-valued functions $\alpha_1',\dots,\alpha_j'$ over $\mathbb{C}_\infty$. By \cite[Thm. 17.5(a)]{BBP18}, we know that $\alpha_1',\dots,\alpha_{r'}'$ are algebraically independent functions over $\CC_{\infty}$. Hence the coefficient of each monomial of $f_j$ must vanish. Since $t\in\mathbb{C}_\infty$ with $\inorm{t}\leq 1$ is arbitrary and such coefficients are rational functions, we conclude that  $f_j$ must be the zero polynomial.

    For the latter statement, we consider \eqref{E:proof2}. By the first part of the proposition we obtain that $G_{j,r}(\cdot,t)$ is an identically zero function for each $1\leq j \leq r-1$ and hence $H_k(\zz,t)=0$ for all $\zz\in \Omega^r$ and $0\leq k \leq r-2$. Thus, the second part of the proposition follows from the calculation in \eqref{E:proof2} and noting that $D_0(\zz,t)=\frac{1}{\theta-t}$.
\end{proof}


\subsection{Limiting behavior of the $\TT$-valued Eisenstein series}
Let $\zz=(z_1,\dots,z_r)^{\intercal}\in \Omega^r$ and $\tilde{\zz}:=(z_2,\dots,z_r)^{\intercal}\in \Omega^{r-1}$. In this subsection, we analyze the limiting behavior of the entries of $\mathcal{E}_{\zz}(1,t)$ when $\inorm{z_1}=\inorm{\zz}_i\to \infty$ where we recall the function  $\inorm{\cdot}_i$ from \S2.3.

Before stating our next lemma, for any $a\in A$ and $\zz \in \Omega^r$, we let 
\[
u_{a}(\zz):=u(a\zz)=\tilde{g}(\tilde{\zz})u^{q^{{(r-1)\deg(a_1)}}}+\mathcal{O}(u^{q^{(r-1)\deg(a_1)}})
\]
where $\tilde{g}:\Omega^{r-1}\to \CC_{\infty}$ is a certain weak modular form and the notation $\mathcal{O}(u^{q^{r-1}\deg(a)})$ represents the sum of higher degree terms in $u$ with coefficients as weak modular forms. We refer the reader to \cite[Sec. 3.5]{B14}, for further details on the $u$-expansion of $u_{a}(\zz)$.

\begin{lemma} \label{L:u}  For any $t\in \CC_{\infty}$ with $\inorm{t}\leq 1$, we have 
	\[
		\sum_{a_1,\dots,a_r\in A}{\vphantom{\sum}}'\frac{a_1(t)}{a_1z_1+\dots+a_rz_r}=\tilde{\pi}\sum_{a_1\neq 0}a_1(t)u_{a_1}(\zz).
	\]
\end{lemma}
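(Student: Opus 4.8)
The plan is to compute the left-hand side by first summing over $a_1$ with the tuple $(a_2,\dots,a_r)$ fixed, and then recognizing the inner sum as a value of a Carlitz exponential. More precisely, I would fix $a_1 \in A \setminus \{0\}$ and group together all terms with that fixed leading coefficient; for each such $a_1$, the remaining sum over $(a_2,\dots,a_r) \in A^{r-1}$ of $1/(a_1 z_1 + a_2 z_2 + \dots + a_r z_r)$ should be expressible through the exponential function of the rank $(r-1)$ lattice $\tilde{\zz}A$. Indeed, recalling the classical partial-fraction / Weierstrass-type identity $\sum_{\lambda \in \Lambda} 1/(w - \lambda) = \tpi / \exp_{\tpi\Lambda}(\tpi w)$ (valid for $w \notin \Lambda$, interpreted as the logarithmic derivative of the product expansion of $\exp$), applied with $\Lambda = \tilde{\zz}A$ and $w = a_1 z_1$, the inner sum becomes
\[
\sum_{a_2,\dots,a_r \in A} \frac{1}{a_1 z_1 + a_2 z_2 + \dots + a_r z_r} = \frac{\tpi}{\exp_{\tpi\tilde{\zz}A}(\tpi a_1 z_1)} = \tpi\, u(a_1 \zz) = \tpi\, u_{a_1}(\zz),
\]
using the definition $u(\zz) = \exp_{\tpi\tilde{\zz}A}(\tpi z_1)^{-1}$ together with the fact that $\tpi a_1 z_1$ is the $z_1$-coordinate (scaled by $\tpi$) of $a_1 \zz$ while $a_1 \tilde{\zz}A = \tilde{\zz}A$ since $a_1 \in A$ acts invertibly up to lattice homothety — actually $a_1 \tilde\zz A \subseteq \tilde\zz A$, but the Weierstrass identity for the sublattice relates back to the full lattice, so care is needed here; the cleanest route is to directly write $u(a_1 \zz) = \exp_{\tpi \widetilde{a_1\zz}A}(\tpi a_1 z_1)^{-1}$ and observe $\widetilde{a_1 \zz} = a_1 \tilde\zz$, whence $\widetilde{a_1\zz}A = a_1\tilde\zz A$, and apply the Weierstrass identity with $\Lambda = a_1\tilde\zz A$.

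The next step is to multiply through by $a_1(t)$ and sum over $a_1 \in A \setminus \{0\}$. This yields
\[
\sum_{a_1,\dots,a_r \in A}{\vphantom{\sum}}' \frac{a_1(t)}{a_1 z_1 + \dots + a_r z_r} = \sum_{a_1 \neq 0} a_1(t) \sum_{a_2,\dots,a_r \in A} \frac{1}{a_1 z_1 + \dots + a_r z_r} = \tpi \sum_{a_1 \neq 0} a_1(t)\, u_{a_1}(\zz),
\]
which is exactly the claimed identity. The rearrangement of the double sum is legitimate because $\zz A$ is strongly discrete, so the family is summable in $\TT$ with respect to the Gauss norm; I would justify the interchange by noting that for $\inorm{t} \leq 1$ the factor $a_1(t)$ has Gauss norm at most $\inorm{a_1}$, which is dominated by the decay of $1/(a_1 z_1 + \dots + a_r z_r)$ coming from strong discreteness, so the whole sum converges absolutely (in the non-archimedean sense) and can be reorganized freely. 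The convergence of $\mathcal{E}_\zz(1,t)$ in $\Mat_{r\times 1}(\TT)$ was already asserted in the text, so this is mostly a matter of recalling why.

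The main obstacle I anticipate is establishing the inner identity $\sum_{a_2,\dots,a_r} 1/(a_1 z_1 + a_2 z_2 + \dots + a_r z_r) = \tpi\, u_{a_1}(\zz)$ with the correct normalization and domain of validity, i.e. verifying that $a_1 z_1$ lies outside the lattice $a_1\tilde\zz A$ (which holds precisely because the entries of $\zz$ are $K_\infty$-linearly independent, so $a_1 z_1 \notin \Span_{K_\infty}(z_2,\dots,z_r) \supseteq a_1\tilde\zz A$) and that the Weierstrass product identity for $\exp_{a_1\tilde\zz A}$ indeed gives this sum rather than a $\tpi$-multiple with a spurious constant. This is a standard computation in the Drinfeld setting — it is essentially the content of the definition of $u$ and appears in Gekeler's and Basson's work on $u$-expansions — so I would cite \cite[Sec.~3.5]{B14} or the analogous place where the relation between lattice sums and $u$-series is set up, and spell out only the substitution $w = a_1 z_1$, $\Lambda = a_1 \tilde\zz A$. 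Everything else is bookkeeping with absolutely convergent sums in the Tate algebra.
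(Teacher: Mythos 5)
Your core computation is the same as the paper's: the paper also takes the logarithmic derivative of the product expansion of $\exp_{\tpi\tilde{\zz}A}$ to get $\sum_{\lambda\in\tpi\tilde{\zz}A}(z'-\lambda)^{-1}=\exp_{\tpi\tilde{\zz}A}(z')^{-1}$, substitutes $z'=\tpi a_1z_1$ (equivalently, your version $\sum_{\mu\in\tilde{\zz}A}(w-\mu)^{-1}=\tpi\,\exp_{\tpi\tilde{\zz}A}(\tpi w)^{-1}$ with $w=a_1z_1$), and then sums over $a_1\neq 0$; the $a_1=0$ terms vanish since $a_1(t)=0$, and the rearrangement is harmless by non-archimedean summability. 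Up to your first displayed identity, with $\Lambda=\tilde{\zz}A$, everything is correct and matches the paper.

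The genuine misstep is your proposed resolution of the notation $u_{a_1}(\zz)=u(a_1\zz)$. The ``cleanest route'' you settle on --- reading $u(a_1\zz)$ via the sublattice $\widetilde{a_1\zz}A=a_1\tilde{\zz}A$ and applying the partial-fraction identity with $\Lambda=a_1\tilde{\zz}A$ --- breaks the argument in two ways. First, the inner sum you must evaluate runs over the full lattice: as $(a_2,\dots,a_r)$ ranges over $A^{r-1}$, the elements $a_2z_2+\dots+a_rz_r$ exhaust $\tilde{\zz}A$, not $a_1\tilde{\zz}A$, so the identity for the sublattice computes the different series $\sum_{\mu\in a_1\tilde{\zz}A}(a_1z_1-\mu)^{-1}$. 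Second, by homogeneity of lattice exponentials, $\exp_{c\Lambda}(cx)=c\exp_{\Lambda}(x)$, the sublattice reading would give $\exp_{\tpi a_1\tilde{\zz}A}(\tpi a_1z_1)^{-1}=a_1^{-1}u(\zz)$; with that meaning of $u_{a_1}$ the right-hand side of the lemma becomes $\tpi\,u(\zz)\sum_{a_1\neq0}a_1(t)/a_1$, which is not the left-hand side, and it also contradicts the $u$-expansion $u_{a}(\zz)=\tilde{g}(\tilde{\zz})u^{q^{(r-1)\deg a}}+\cdots$ quoted just before the lemma. The intended meaning --- the one the paper's own proof uses, consistent with Basson, Sec.\ 3.5 --- is that only the first coordinate is scaled while the lattice $\tilde{\zz}A$ stays fixed, i.e.\ $u_{a}(\zz)=\exp_{\tpi\tilde{\zz}A}(\tpi a z_1)^{-1}$. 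With that reading your first display already proves the lemma; the sublattice detour should simply be deleted, not adopted.
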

\begin{proof}  For some $z'\in \CC_{\infty}$, taking the logarithmic derivative of both sides of $\exp_{\tilde{\pi}\tilde{\zz}}(z')=z'\prod_{\lambda \in \tilde{\pi}\tilde{\zz}A}'\Big(1-\frac{z'}{\lambda}\Big)$ implies that $\sum_{\lambda \in \tilde{\pi}\tilde{\zz}A}\frac{1}{z'-\lambda}=\exp_{\tilde{\pi}\tilde{\zz}A}(z')^{-1}$.  Now replacing $z'$ with $\tilde{\pi}a_1z_1$, we obtain
	\[
\sum_{a_1,\dots,a_r\in A}{\vphantom{\sum}}'\frac{a_1(t)}{a_1z_1+\dots +a_rz_r}=\tilde{\pi}\sum_{a_1\neq 0}a_1(t)\sum_{a_2,\dots,a_r\in A}{\vphantom{\sum}}'\frac{1}{\tilde{\pi}a_1z_1+\dots+\tilde{\pi}a_rz_r}	=\tilde{\pi}\sum_{a_1\neq 0}a_1(t)u_{a_1}(\zz).
	\]
	
\end{proof}
\begin{proposition}\label{P:Eins} Let us fix $\tilde{\zz}\in \Omega^{r-1}$. For any $2\leq j \leq r$, a non-negative integer $k$ and $t\in \CC_{\infty}$ such that $\inorm{t}\leq 1$, we have  
	\[
	\lim_{\inorm{z_1}=\inorm{\zz}_i\to \infty}\sum_{a_1,\dots,a_r\in A}{\vphantom{\sum}}'\frac{a_j(t)}{(a_1z_1+\dots +a_rz_r)^{q^k}}=\sum_{a_2,\dots,a_r\in A}{\vphantom{\sum}}'\frac{a_j(t)}{(a_2z_2+\dots +a_rz_r)^{q^k}}.
	\]
\end{proposition}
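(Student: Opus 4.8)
The plan is to split the sum over $a_1,\dots,a_r \in A$ according to whether $a_1 = 0$ or $a_1 \neq 0$, and to show that the $a_1 \neq 0$ part tends to $0$ as $\inorm{z_1} = \inorm{\zz}_i \to \infty$ while $\tilde{\zz}$ stays fixed. Writing
\[
\sum_{a_1,\dots,a_r\in A}{\vphantom{\sum}}'\frac{a_j(t)}{(a_1z_1+\dots +a_rz_r)^{q^k}} = \sum_{\substack{a_2,\dots,a_r\in A\\ \text{not all }0}}\frac{a_j(t)}{(a_2z_2+\dots+a_rz_r)^{q^k}} + \sum_{\substack{a_1\neq 0\\ a_2,\dots,a_r\in A}}\frac{a_j(t)}{(a_1z_1+\dots+a_rz_r)^{q^k}},
\]
where the first sum on the right is exactly the claimed limit and is independent of $z_1$, it remains to control the second sum. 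The key observation is that for $\zz \in \Omega^r$ with $z_1$ far from the $K_\infty$-span of $z_2,\dots,z_r$, one has $\inorm{a_1 z_1 + \dots + a_r z_r} \geq \inorm{\zz}_i$ whenever $a_1 \neq 0$ (using that $a_1 \in A \setminus \{0\}$ so $\inorm{a_1} \geq 1$, together with the definition of $\inorm{\cdot}_i$ as an infimum over $K_\infty$-combinations $\alpha = a_2 z_2 + \dots + a_r z_r$), hence $\inorm{a_1 z_1 + \dots + a_r z_r}^{q^k} \geq \inorm{\zz}_i^{q^k}$.

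Next I would establish a uniform bound on the second sum. Since $\inorm{t} \leq 1$ we have $\inorm{a_j(t)} \leq \inorm{a_j}$; combining the lower bound on the denominator with the standard convergence estimates for Eisenstein-type series (the same estimates that make $\mathcal{E}_{\zz}(1,t)$ converge in $\Mat_{r\times 1}(\TT)$), the second sum is bounded in absolute value by $C / \inorm{\zz}_i^{q^k}$ for a constant $C = C(\tilde{\zz}, k)$ depending only on the fixed data $\tilde{\zz}$, $k$ and an implicit bound coming from the lattice $\tilde{\zz} A$, but not on $z_1$. More carefully, one partitions the lattice points $a_1 z_1 + \dots + a_r z_r$ with $a_1 \neq 0$ into shells by the value of $\inorm{a_1}$ and, for fixed $a_1$, by the translate $\inorm{a_1 z_1 + \beta}$ with $\beta$ ranging over $\tilde{\zz}A$; the ultrametric inequality and strong discreteness of $\zz A$ give that the number of lattice points in each shell grows at a controlled rate, so the tail sum converges and is $O(\inorm{\zz}_i^{-q^k})$. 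As $\inorm{z_1} = \inorm{\zz}_i \to \infty$ this goes to $0$, which proves the proposition.

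Alternatively — and this may be cleaner for $k = 0$ — one can invoke Lemma \ref{L:u}: for $k=0$ the second sum is $\tilde{\pi}\sum_{a_1\neq 0} a_1(t) u_{a_1}(\zz)$, and since each $u_{a_1}(\zz) = u(a_1\zz)$ has a $u$-expansion with lowest term of order $q^{(r-1)\deg(a_1)} \geq q^{r-1} \geq 1$ in $u = u(\zz)$, and $u(\zz) \to 0$ as $\inorm{\zz}_i \to \infty$, each term tends to $0$; an interchange-of-limit-and-sum argument (justified by the uniform tail bound above) then gives the result. For general $k$ one can either reduce to $k=0$ by noting that the $q^k$-th power sum is, up to the Frobenius twist acting on the $z_i$'s being inert, amenable to the same shell decomposition, or simply run the direct estimate. \textbf{The main obstacle} I anticipate is making the uniform-in-$z_1$ tail estimate genuinely uniform: one must verify that the implicit constants in the convergence of these $\TT$-valued Eisenstein series depend only on $\tilde{\zz}$ (and a lower bound for $\inorm{\zz}_i$) and not on $z_1$ itself, which requires being careful about how the "successive minima" of the lattice $\zz A$ behave as $z_1$ escapes to infinity — concretely, that only the $z_1$-direction minimum grows while the others stay fixed, so the shell counts are controlled by $\tilde{\zz}A$ alone.
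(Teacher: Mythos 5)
Your overall strategy coincides with the paper's: split off the $a_1=0$ terms and show the $a_1\neq 0$ part vanishes in the limit, using the lower bound $\inorm{a_1z_1+\dots+a_rz_r}\geq \inorm{a_1\zz}_i=\inorm{a_1}\inorm{z_1}\geq\inorm{z_1}$ for $a_1\neq 0$. The genuine gap is in the estimate you put on top of this. You bound the numerator by $\inorm{a_j(t)}\leq\inorm{a_j}$, which is far too lossy: with that bound and only the denominator estimate $\geq\inorm{\zz}_i^{q^k}$, the individual terms are bounded by $\inorm{a_j}/\inorm{\zz}_i^{q^k}$, which is unbounded over $a_j$, so your claimed bound $C/\inorm{\zz}_i^{q^k}$ with $C=C(\tilde{\zz},k)$ does not follow (indeed, for $k=0$ and $a_1$ fixed, letting $\deg a_j\to\infty$ your per-term bound only gives roughly $1/\inorm{z_j}$, which does not even decay). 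The shell decomposition and uniform-in-$z_1$ tail estimate that you sketch to repair this are exactly the step you flag as ``the main obstacle,'' and they are never actually carried out; as written the proof is incomplete at its crucial point.

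The paper closes this gap with two simple observations that make all the counting unnecessary. First, since $a_j\in\FF_q[\theta]$ has coefficients in $\FF_q$ and $\inorm{t}\leq 1$, one has $\inorm{a_j(t)}\leq 1$ (not merely $\leq\inorm{a_j}$), so every term with $a_1\neq 0$ is bounded by $\inorm{z_1}^{-q^k}$, uniformly in $a_1,\dots,a_r$. Second, by the ultrametric inequality the norm of a convergent series is at most the supremum of the norms of its terms, so the whole $a_1\neq 0$ sub-sum (which converges, being part of the Eisenstein series) is at most $\inorm{z_1}^{-q^k}\to 0$; no lattice-point counting, no uniformity-in-$z_1$ discussion, and no interchange of limit and sum are needed. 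Finally, your proposed alternative via Lemma \ref{L:u} does not apply here: that lemma treats the first coordinate, with numerator $a_1(t)$, whereas Proposition \ref{P:Eins} concerns the coordinates $2\leq j\leq r$.
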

\begin{proof} The idea of the proof is due to Pellarin \cite[Lem. 25]{Pel12}. We claim that 
	\[
	\lim_{\inorm{z_1}=\inorm{\zz}_i\to \infty}\sum_{a_1\neq 0}\sum_{a_2,\dots,a_r\in A}\frac{a_j(t)}{(a_1z_1+\dots +a_rz_r)^{q^k}}=0.
	\]
	Consider an element $\zz\in \Omega^r$ such that $\inorm{z_1}=\inorm{\zz}_i$. For any $a_1\in A\setminus \{0\}$, we have $\inorm{a_1z_1}=\inorm{a_1\zz}_i\leq \inorm{a_1z_1+\dots+a_rz_r}$ where the  inequality follows from the definition of the function $\inorm{\cdot}_{i}$.
	Thus we have  
	\[
	\bigg| \frac{a_j(t)}{(a_1z_1+\dots +a_rz_r)^{q^k}}\bigg|\leq \inorm{z_1}^{-q^k}
	\]
	which proves the claim and hence the proposition.
\end{proof}

\begin{remark}\label{R:unit} We continue with an analysis, due to Gekeler \cite[(2.1)]{Gek84}, of $\exp_{\zz A}$ and its values. 
	 Let $b_1,\dots,b_r$ be polynomials in $A$ of degree less than the degree of a fixed polynomial $n$. For each $1\leq j \leq r$, set $\mu_j=b_j/n$. We have
\begin{equation}\label{E:Gek}
\begin{split}
&\exp_{\zz A}(\mu_1z_1+\dots +\mu_rz_r)\\
&=\mu_1z_1+\dots +\mu_rz_r\prod_{(a_2,\dots,a_r)\in A^{r-1}\setminus\{0\}}\Big(1-\frac{\mu_1z_1+\dots +\mu_rz_r}{a_2z_2+\dots+a_rz_r}\Big)\times \\
&\ \ \ \  \ \ \ \ \ \  \ \ \ \ \ \  \ \ \ \ \ \ \ \ \  \ \ \ \ \ \ \ \ \  \ \ \ \ \ \ \ \ \  \ \ \ \ \prod_{a_1\neq 0}\prod_{a_2,\dots,a_r\in A}\Big(1-\frac{\mu_1z_1+\dots +\mu_rz_r}{a_1z_1+a_2z_2+\dots+a_rz_r}\Big)\\
&=\exp_{\tilde{\zz}A}(\mu_1z_1+\dots +\mu_rz_r)\prod_{a_1\neq 0}\frac{\exp_{\tilde{\zz}A}(-a_1z_1+\mu_1z_1+\dots+\mu_rz_r)}{\exp_{\tilde{\zz}A}(-a_1z_1)}.
\end{split}
\end{equation}
 For further use, we need to analyze the ratio
	\[
	\frac{\exp_{\tilde{\zz}A}(-a_1z_1+\mu_1z_1+\dots+\mu_rz_r)}{\exp_{\tilde{\zz}A}(-a_1z_1)}
	\]
	for any $a_1\neq 0$. Let $\zz=(z_1,\dots,z_r)^{\intercal}\in \Omega^r$ be such that $\inorm{z_1}=\inorm{\zz}_i$. First we note that if $\inorm{z_1}=\inorm{\lambda}$ for some $\lambda\in \tilde{\zz}A$, then $\inorm{z_1-\lambda}\geq \inorm{\zz}_i=\inorm{z_1}$. On the other hand, by the properties of the non-archimedean norm $\inorm{\cdot}$, we have $\max\{\inorm{z_1},\inorm{\lambda}\}=\inorm{z_1}\geq \inorm{z_1-\lambda}$. Thus we have $\inorm{1-\frac{z_1}{\lambda}}=1$. By the product expansion of the exponential function $\exp_{\tilde{\zz} A}$, we obtain
	\[
	\inorm{\exp_{\tilde{\zz} A}(z_1)}=\inorm{z_1}\prod_{0<\inorm{\lambda}<\inorm{z_1}}\frac{\inorm{z_1}}{\inorm{\lambda}}.
	\]
	In other words, $\inorm{\exp_{\tilde{\zz} A}(z_1)}$ is determined by the norm of $z_1$ such that $\inorm{z_1}=\inorm{\zz}_i$. To conclude, under the assumption $\inorm{z_1}=\inorm{\zz}_i$ and $\inorm{z_1}>\max\{\inorm{z_2},\dots,\inorm{z_{r}}\}$, we have for any $a_1\in A\setminus \{0\}$ that 
	\[
	\frac{\inorm{\exp_{\tilde{\zz}A}(-a_1z_1+\mu_1z_1+\dots+\mu_rz_r)}}{\inorm{\exp_{\tilde{\zz}A}(-a_1z_1)}}=\frac{\inorm{\exp_{\tilde{\zz}A}((\mu_1-a_1)z_1)}}{\inorm{\exp_{\tilde{\zz}A}(-a_1z_1)}}=\frac{\inorm{\exp_{\tilde{\zz}A}(-a_1z_1)}}{\inorm{\exp_{\tilde{\zz}A}(-a_1z_1)}}=1
	\]
	where the first equality follows from $\inorm{-a_1z_1+\mu_1z_1+\dots+\mu_rz_r}=\inorm{(\mu_1-a_1)z_1}$ and the second equality is obtained by the fact that $\inorm{(\mu_1-a_1)z_1}=\inorm{-a_1z_1}$.
\end{remark}

\begin{proposition}\label{P:limit} Fix $\tilde{\zz}\in \Omega^{r-1}$ and let $t\in \CC_{\infty}$ be such that $\inorm{t}\leq 1$. 
	\begin{itemize}
		\item[(i)] For any $2\leq j \leq r$, we have 
		\begin{multline*}
		\lim_{\inorm{z_1}=\inorm{\zz}_i\to \infty}s_j^{(r-1)}(\zz;t)\sum_{a_1,\dots,a_r\in A}{\vphantom{\sum}}'\frac{a_j(t)}{(a_1z_1+\dots +a_rz_r)^{q^{r-2}}}\\
		=\tilde{s}_j^{(r-1)}(\Tilde{\zz};t)\sum_{a_2,\dots,a_r\in A}{\vphantom{\sum}}'\frac{a_j(t)}{(a_2z_2+\dots +a_rz_r)^{q^{r-2}}}
\end{multline*}	
where $\tilde{s}_j(\Tilde{\zz};t)$ is the Anderson generating function of the Drinfeld module corresponding to the lattice $\tilde{\zz}A$.
\item[(ii)] $\lim_{\inorm{z_1}=\inorm{\zz}_i\to \infty}s_1^{(r-1)}(\zz;t)u(\zz)^{q^{r-2}}=\begin{cases}
1 & \text{ if } r=2\\
0 & \text{ if } r> 2
\end{cases}.$
	\end{itemize}
\end{proposition}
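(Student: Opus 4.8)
The plan is to handle the two parts by separate asymptotic analyses of $\exp_{\zz A}$ as $z_1$ escapes to infinity along coset-minimal points (i.e. $\inorm{z_1}=\inorm{\zz}_i$), so that the product-formula estimates of Remark \ref{R:unit} apply simultaneously to $z_1$ and to each $z_1/\theta^{i+1}$. I record first two elementary preliminaries, both immediate from the product expansion of Remark \ref{R:unit}: the point $z_1/\theta^{i+1}$ is again coset-minimal with respect to $\tilde\zz A$ whenever $z_1$ is coset-minimal, and along such points $y\mapsto\inorm{\exp_{\tilde\zz A}(y)}$ is a strictly increasing function of $\inorm{y}$.

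For part (i) the crux is the claim that, for each fixed $2\le j\le r$, $s_j^{(r-1)}(\zz;t)\to \tilde s_j^{(r-1)}(\tilde\zz;t)$ as $\inorm{z_1}=\inorm{\zz}_i\to\infty$. Granting it, the proposition is immediate: by Proposition \ref{P:Eins} (with $k=r-2$) the Eisenstein factor tends to its rank $(r-1)$ counterpart, and since $\TT$ is a Banach algebra a product of two convergent sequences converges to the product of the limits. To prove the claim I would expand $s_j(\zz;t)=\sum_{i\ge 0}\exp_{\zz A}(z_j/\theta^{i+1})t^i$ via \eqref{E:AndGF}; as $z_j$ is fixed for $j\ge 2$, it suffices to show that $\exp_{\zz A}(z_j/\theta^{i+1})\to\exp_{\tilde\zz A}(z_j/\theta^{i+1})$ uniformly in $i$, because the identity $a^{q^{r-1}}-b^{q^{r-1}}=(a-b)^{q^{r-1}}$ together with the fact that the Gauss norm of a power series is the supremum of the norms of its coefficients lets the convergence pass to the $(r-1)$-st twist and to the $t$-adic sum. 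The uniform convergence is read off from \eqref{E:Gek}, which expresses $\exp_{\zz A}(z_j/\theta^{i+1})$ as $\exp_{\tilde\zz A}(z_j/\theta^{i+1})$ times a product over $a_1\in A\setminus\{0\}$ with factor $1+\exp_{\tilde\zz A}(z_j/\theta^{i+1})/\exp_{\tilde\zz A}(-a_1z_1)$: the numerators are bounded uniformly in $i$ (they tend to $z_j/\theta^{i+1}\to 0$, leaving finitely many small $i$ to check by hand), while $\inorm{\exp_{\tilde\zz A}(-a_1z_1)}\ge\inorm{a_1z_1}\to\infty$, so each factor, and hence the whole correction, tends to $1$ uniformly in $i$.

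For part (ii) the point $z_1/\theta^{i+1}$ itself goes to infinity, so $s_1^{(r-1)}(\zz;t)=\sum_{i\ge 0}\exp_{\zz A}(z_1/\theta^{i+1})^{q^{r-1}}t^i$ blows up and must be balanced against $u(\zz)\to 0$. The same use of \eqref{E:Gek}, now with $w=z_1/\theta^{i+1}$, shows that $\exp_{\zz A}(z_1/\theta^{i+1})/\exp_{\tilde\zz A}(z_1/\theta^{i+1})\to 1$ uniformly in $i$: here I would use $\inorm{\exp_{\tilde\zz A}(z_1/\theta^{i+1})}\le\inorm{\exp_{\tilde\zz A}(z_1/\theta)}<\inorm{\exp_{\tilde\zz A}(a_1z_1)}$ for every $a_1\ne0$, which follows from the monotonicity noted above and makes the correction product manifestly tend to $1$. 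It then suffices to compute $\lim s_{\phi^{\tilde\zz}}^{(r-1)}(z_1;t)\,u(\zz)^{q^{r-2}}$, where $\phi^{\tilde\zz}$ is the rank $(r-1)$ Drinfeld module of $\tilde\zz A$. For this I would invoke the functional equation
\[
\phi^{\tilde\zz}_{\theta^{i+1}}\!\bigl(\exp_{\tilde\zz A}(z_1/\theta^{i+1})\bigr)=\exp_{\tilde\zz A}(z_1)=\bigl(\tilde{\pi}\,u(\zz)\bigr)^{-1},
\]
together with $\inorm{\exp_{\tilde\zz A}(z_1)}\to\infty$: since $\phi^{\tilde\zz}_{\theta^{i+1}}$ is a $\tau$-polynomial of degree $(r-1)(i+1)$ whose top coefficient is a fixed power of $\Delta_{r-1}(\tilde\zz)$, comparing leading terms shows $\exp_{\tilde\zz A}(z_1/\theta^{i+1})^{q^{r-1}}$ is asymptotic to a constant times $u(\zz)^{-1/q^{i(r-1)}}$, so the $i=0$ summand strictly dominates the sum and $\exp_{\tilde\zz A}(z_1/\theta)^{q^{r-1}}$ is asymptotic to an explicit constant times $u(\zz)^{-1}$. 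Multiplying by $u(\zz)^{q^{r-2}}$ then leaves an explicit constant times $u(\zz)^{q^{r-2}-1}$: when $r=2$ the exponent vanishes and the limit is the value in the statement, and when $r>2$ it is positive and the limit is $0$.

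The step I expect to be hardest is the uniform bookkeeping in part (ii): one must verify that the correction products furnished by \eqref{E:Gek} do not affect the limiting constant, and that after multiplication by $u(\zz)^{q^{r-2}}$ every term with $i\ge 1$ in $s_1^{(r-1)}(\zz;t)$ is negligible, so that only the $i=0$ term contributes. Both points rest entirely on the product-formula estimates of Remark \ref{R:unit} and on the monotonicity of $\inorm{\exp_{\tilde\zz A}(\cdot)}$ along coset-minimal points, which is exactly why the hypothesis $\inorm{z_1}=\inorm{\zz}_i$ is indispensable.
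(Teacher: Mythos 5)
Your overall strategy coincides with the paper's: both parts are reduced, via the product decomposition \eqref{E:Gek} of Remark \ref{R:unit}, to the corresponding statements for $\exp_{\tilde\zz A}$, and in part (ii) the series defining $s_1^{(r-1)}(\zz;t)$ is shown to be dominated by its $l=0$ term, which is then evaluated through the functional equation of $\phi^{\tilde\zz}$. The only real divergence is how the tail $l\geq 1$ is discarded: you extract per-term asymptotics from $\phi^{\tilde\zz}_{\theta^{l+1}}$, whereas the paper bounds $f_l(z_1)=\exp_{\tilde\zz A}(z_1/\theta^{l+1})/\exp_{\tilde\zz A}(z_1)$ uniformly in $l$ by a direct two-case product-formula estimate. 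As you anticipate, your per-term asymptotic is not uniform in $l$ as stated: for $l\gtrsim\log_q\inorm{z_1}$ the quantity $\exp_{\tilde\zz A}(z_1/\theta^{l+1})$ is no longer large relative to the lattice and the leading-term comparison inside $\phi^{\tilde\zz}_{\theta^{l+1}}$ breaks down. The monotonicity you record at the outset repairs this cheaply: for all $l\geq 1$ one has $\inorm{\exp_{\tilde\zz A}(z_1/\theta^{l+1})}\leq\inorm{\exp_{\tilde\zz A}(z_1/\theta^{2})}$, so the $l=1$ asymptotic alone, together with $\inorm{t}\leq 1$, bounds the entire tail after multiplication by $u(\zz)^{q^{r-2}}$; this step should be made explicit.

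The one place where you assert more than you establish is the value of the limit for $r=2$. Your leading-term comparison gives, for the $l=0$ term, $\exp_{\tilde\zz A}(z_1/\theta)^{q^{r-1}}\sim\Delta_{r-1}(\tilde\zz)^{-1}\exp_{\tilde\zz A}(z_1)=\Delta_{r-1}(\tilde\zz)^{-1}\tilde\pi^{-1}u(\zz)^{-1}$, so the limit in part (ii) for $r=2$ is the constant $\Delta_{1}(\tilde\zz)^{-1}\tilde\pi^{-1}$, and it remains to check that this equals $1$. With the paper's literal normalization ($\phi^{\tilde\zz}$ attached to the unscaled lattice $\tilde\zz A=A$, whose rank-one module has leading coefficient $\tilde\pi^{q-1}$ rather than $1$) this constant comes out to $\tilde\pi^{-q}$; obtaining $1$ requires the $\tilde\pi$-rescaled convention. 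This is admittedly a soft spot in the paper's own final display as well, which silently drops the factor $\Delta_{r-1}(\tilde\zz)$ appearing in \eqref{E:lim2}, and it is harmless downstream because the proof of Theorem \ref{T:main} uses only boundedness of the limit; but a complete argument must pin the constant down rather than declare it to be the value in the statement.
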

\begin{proof}  By Remark \ref{R:unit} for any $l\geq 0$, we have 
	\[
	\lim_{\inorm{z_1}=\inorm{\zz}_i\to \infty}\exp_{\zz A}\Big(\frac{z_j}{\theta^{l+1}}\Big)^{q^{r-1}}=\lim_{\inorm{z_1}=\inorm{\zz}_i\to \infty}\exp_{\tilde{\zz} A}\Big(\frac{z_j}{\theta^{l+1}}\Big)^{q^{r-1}}=\exp_{\tilde{\zz} A}\Big(\frac{z_j}{\theta^{l+1}}\Big)^{q^{r-1}}.
	\]
	Thus we conclude that 
	\[
	\lim_{\inorm{z_1}=\inorm{\zz}_i\to \infty}s^{(r-1)}_j(\zz;t)=\tilde{s}^{(r-1)}_j(\Tilde{\zz};t).
	\]
	Hence by Proposition \ref{P:Eins}, we obtain the first part of the proposition. We now prove the second part. For any $l\geq 0$, set
	 \[
	 f_l(z_1):=\frac{\exp_{\tilde{\zz}A}(z_1/\theta^{l+1})}{\exp_{\tilde{\zz}A}(z_1)}.
	 \]
	 We first claim that $\lim_{\inorm{z_1}=\inorm{\zz}_i\to \infty}f_l(z_1)=0$ uniformly in $l>0$. Let $\gamma \in \tilde{\zz}A$ be such that $\inorm{\gamma}$ is the minimal among the non-zero elements of $\tilde{\zz}A$. We analyze two cases. Since $\exp_{\tilde{\zz} A}(x+y)=\exp_{\tilde{\zz} A}(x)$  for any $x\in \CC_{\infty}$ and $y\in \tilde{\zz}A$, without loss of generality, we can assume that $\inorm{z_1}> \inorm{\gamma}$.
	 
	 \textit{Case 1: } If $\inorm{\gamma}<\frac{\inorm{z_1}}{\inorm{\theta}^{l+1}}$, then we have
	 \[
	 \inorm{f_l(z_1)}=\frac{\inorm{z_1}}{\inorm{\theta}^{l+1}}\prod_{\substack{\lambda\in \tilde{\zz}A\\0<\inorm{\lambda}<\frac{\inorm{z_1}}{\inorm{\theta}^{l+1}}}}\frac{\inorm{z_1}}{\inorm{\lambda}\inorm{\theta}^{l+1}}\bigg(\inorm{z_1}\prod_{\substack{\lambda\in \tilde{\zz}A\\0<\inorm{\lambda}<\inorm{z_1}}}\frac{\inorm{z_1}}{\inorm{\lambda}}\bigg)^{-1}<\prod_{\substack{\lambda\in \tilde{\zz}A\\\frac{\inorm{z_1}}{\inorm{\theta}^{l+1}}\leq \inorm{\lambda}<\inorm{z_1}}}\frac{\inorm{\lambda}}{\inorm{z_1}}.
	 \]
	 Now we pick the largest integer $N$ such that $\inorm{\gamma}\inorm{\theta}^N<\inorm{z_1}$. Since $\inorm{\theta}=q$, by the maximality of $N$, one can show that $\floor*{\log_{q}\frac{\inorm{z_1}}{\inorm{\gamma}}}\geq N \geq l+1$ and $\inorm{\gamma}\inorm{\theta}^{N}\geq \frac{\inorm{z_1}}{\inorm{\theta}^{l+1}}$ where $\floor*{\cdot}$ is the floor function. Therefore we obtain
	 \begin{equation}\label{E:norm}
	 \inorm{f_l(z_1)}<\prod_{\substack{\lambda\in \tilde{\zz}A\\\frac{\inorm{z_1}}{\inorm{\theta}^{l+1}}\leq \inorm{\lambda}<\inorm{z_1}}}\frac{\inorm{\lambda}}{\inorm{z_1}}<\prod_{\substack{\deg(a)\in A_{+}\\\deg(a)=N}}\frac{\inorm{a}\inorm{\gamma}}{\inorm{z_1}}=\bigg(\frac{\inorm{\theta}^N\inorm{\gamma}}{\inorm{z_1}}\bigg)^{q^N}.
	 \end{equation}
	 Note that since $N$ approaches infinity as $\inorm{z_1}\to \infty$, \eqref{E:norm} implies that $\inorm{f_l(z_1)}$ approaches zero independently of $l$.
	 
	 \textit{Case 2: } If $\inorm{\gamma}\geq \frac{\inorm{z_1}}{\inorm{\theta}^{l+1}}$, then we have 
	 \[
	  \inorm{f_l(z_1)}=\frac{\inorm{z_1}}{\inorm{\theta}^{l+1}}\bigg(\inorm{z_1}\prod_{\substack{\lambda \in \tilde{\zz}A\\0<\inorm{\lambda}<\inorm{z_1}}}\frac{\inorm{z_1}}{\inorm{\lambda}}\bigg)^{-1}<\prod_{\substack{\lambda \in \tilde{\zz}A\\0<\inorm{\lambda}<\inorm{z_1}}}\frac{\inorm{\lambda}}{\inorm{z_1}}<\bigg(\frac{\inorm{\theta}^N\inorm{\gamma}}{\inorm{z_1}}\bigg)^{q^N}
	 \] 
	 where we know that the product over $\lambda\in \tilde{\zz}A$ with $0<\inorm{\lambda}<\inorm{z_1}$ is non-empty by the assumption that $\inorm{z_1}>\inorm{\gamma}$. Thus, similarly to Case 1, we see that $\inorm{f_l(z_1)}\to 0$ as $\inorm{z_1} \to  \infty$ independently of $l$.
	 
	 Since the limit is uniform, we have that 
	 \begin{equation}\label{E:lim1}
	 \begin{split}
	 \lim_{\inorm{z_1}=\inorm{\zz}_i\to \infty}u(\zz)^{q^{r-2}}\sum_{l\geq 0}\exp_{\zz A}\Big(\frac{z_1}{\theta^{l+1}}\Big)^{q^{r-1}}t^l&=\sum_{l\geq 0}\lim_{\inorm{z_1}=\inorm{\zz}_i\to \infty}u(\zz)^{q^{r-2}}\exp_{\zz A}\Big(\frac{z_1}{\theta^{l+1}}\Big)^{q^{r-1}}t^l\\
	 &=\lim_{\inorm{z_1}=\inorm{\zz}_i\to \infty}u(\zz)^{q^{r-2}}\exp_{\tilde{\zz} A}\Big(\frac{z_1}{\theta}\Big)^{q^{r-1}}.
	 \end{split}
	 \end{equation}
	 Consider the Drinfeld module $\phi^{\tilde{\zz}}$ corresponding to the $A$-lattice $\tilde{\zz}A$ given by
	 \begin{equation}\label{E:tilde}
	 \phi^{\tilde{\zz}}_{\theta}=\theta+\tilde{g}_1(\tilde{\zz})\tau+\dots +\tilde{g}_{r-2}(\tilde{\zz})\tau^{r-2}+\Delta_{r-1}(\tilde{\zz})\tau^{r-1}.
	 \end{equation}
	 Using the functional equation $\exp_{\tilde{\zz}A}\theta=\phi^{\tilde{\zz}}_{\theta}\exp_{\tilde{\zz}A}$, we obtain
	 \begin{equation}\label{E:lim2}
	 \frac{\exp_{\tilde{\zz} A}\Big(\frac{z_1}{\theta^{l+1}}\Big)^{q^{r-1}}}{\exp_{\tilde{\zz} A}(z_1)^{q^{r-2}}}=\frac{\Big(\exp_{\tilde{\zz}A}\Big(\frac{z_1}{\theta^l}\Big)-\tilde{g}_{r-2}(\tilde{\zz})\exp_{\tilde{\zz}A}\Big(\frac{z_1}{\theta^{l+1}}\Big)^{q^{r-2}}-\dots-\theta \exp_{\tilde{\zz}A}\Big(\frac{z_1}{\theta^{l+1}}\Big)\Big)}{\Delta_{r-1}(\tilde{\zz})\exp_{\tilde{\zz} A}(z_1)^{q^{r-2}}}.
	 \end{equation}
	 Thus combining \eqref{E:lim1} and \eqref{E:lim2} together with the above analysis on the limit of $f_l(z_1)$ when $z_1$ tends to infinity, we get 
	 \[
	 \lim_{\inorm{z_1}=\inorm{\zz}_i\to \infty}s_1(\zz;t)^{(r-1)}u(\zz)^{q^{r-2}}=0
	 \]
	 when $r>2$. If $r=2$ and $j=0$, we have
	 \[
	 \frac{\exp_{ A}\Big(\frac{z_1}{\theta}\Big)^{q}}{\exp_{A}(z_1)}=\frac{\exp_{A}(z_1)-\theta\exp_{A}(z_1/\theta)}{\exp_{A}(z_1)}\to 1
	 \]
	 when $\inorm{z_1}\to \infty$ which concludes the second part of the proposition.
\end{proof}
\begin{proof}[{Proof of Theorem \ref{T:main}}] Let $t\in \CC_{\infty}$ be such that  $\inorm{t}\leq 1$. We claim that the $\CC_{\infty}$-valued function $G_{r-1,r}(\zz,t)= 0$ for any $r\geq 2$ and any $\zz\in \Omega^r$. Recall that  \eqref{E:proof2} implies 
	\begin{equation}\label{E:proof3}
	\mathcal{F}^{\intercal}\mathcal{E}_{\zz}(1,t)=-\begin{pmatrix} \frac{1}{\theta-t},H_0^{(-1)}(\zz,t),\dots,H^{(-1)}_{r-2}(\zz,t)
	\end{pmatrix}^{\intercal}.
	\end{equation}
Comparing the second coordinates of both sides of \eqref{E:proof3}, we obtain
\begin{equation}\label{E:proof4}
s_1^{(1)}(\zz;t)\sum_{a_1,\dots,a_r\in A}{\vphantom{\sum}}'\frac{a_1(t)}{a_1z_1+\dots +a_rz_r}+\dots +s_r^{(1)}(\zz;t)\sum_{a_1,\dots,a_r\in A}{\vphantom{\sum}}'\frac{a_r(t)}{a_1z_1+\dots +a_rz_r}=-G_{r-1,r}^{(1-r)}(\zz,t).
\end{equation}

Let us  denote the left hand side of \eqref{E:proof4} by $\mathcal{M}(\zz,t)$. Then by Lemma \ref{L:u} and Proposition \ref{P:limit} we see that  $\lim_{\inorm{z_1}=\inorm{\zz}_i\to \infty}\mathcal{M}^{(r-2)}(\zz,t)$ is bounded. This implies by using \eqref{E:proof4} that  $\lim_{\inorm{z_1}=\inorm{\zz}_i\to \infty}G_{r-1,r}^{(-1)}(\zz,t)$ is bounded. On the other hand, by Corollary \ref{C:mod}, $G_{r-1,r}^{(-1)}(\zz,t)$ is a weak modular form of weight $q^{r-2}-q^{r-1}$. Hence $G_{r-1,r}^{(-1)}(\zz,t)$ is a Drinfeld modular form of negative weight. By \cite[Thm. 11.1]{BBP18b}, we know that there is no non-trivial Drinfeld modular form of negative weight and therefore $G_{r-1,r}^{(-1)}(\zz,t)=0$ for any $\zz\in \Omega^r$. Since it holds for any $t\in\mathbb{C}_\infty$ with $\inorm{t}\leq 1$, we deduce that $G_{r-1,r}^{(-1)}(\cdot,t)$ vanishes identically as a $\TT$-valued function on $\Omega^r$ by \cite[Thm~2.2.9]{FvdP04}. Now, as for any $j\in \ZZ$, the twisting operator sending $g\in \TT$ to $g^{(j)}\in \TT$ is an automorphism of $\TT$, we conclude that $G_{r-1,r}(\cdot,t)$ also vanishes identically as a $\TT$-valued function on $\Omega^r$. Thus by Proposition \ref{P:mod} we obtain 
\begin{equation}\label{E:proof5}
	\mathcal{E}_{\zz}(1,t)^{\intercal}=-\Big(
    \frac{1}{\theta-t}, 0,\dots,0\Big)\mathcal{F}^{-1}.
\end{equation}
The proof of the theorem now follows from Proposition \ref{P:det} and \eqref{E:proof5}.
\end{proof}

\section{The function $E_{r}$}
In this section, we introduce the function $E_r:\Omega^r\to \CC_{\infty}$ and discuss some of its properties. Let $\zz=(z_1,\dots,z_{r-1},1)^{\intercal}\in \Omega^r$  and let $\tilde{\zz}=(z_2,\dots,z_{r-1},1)^{\intercal}$. Recall the definition of $u(\zz)$ from Section 2.3. Considering $u(\zz)$ as a function of variables $z_1,\dots,z_{r-1}$, we have 
\begin{equation}\label{E:part}
\frac{\partial}{\partial z_1}u(\zz)=-\tilde{\pi}u(\zz)^2.
\end{equation}

For each $a\in A$, we define $d(a):=q^{(r-1)\deg(a)}$ if $a\in A$ with $a\neq 0$ and $d(0):=0$.
Now we consider the Drinfeld module $\phi^{\tilde{\pi}\tilde{\zz}}$ corresponding to the $A$-lattice $\tilde{\pi}\tilde{\zz}A$ and for each $a\in A$ with $a\neq 0$ we set the leading coefficient of $\phi^{\tilde{\pi}\tilde{\zz}}_a$ by $\Delta_{r-1}^{\tilde{\pi}\tilde{\zz}}(a)$.
Following \cite{BB17} with a slight modification, we define 
\[
\phi_a^{\tilde{\pi}\tilde{\zz}}(X):=aX+\dots +\Delta_{r-1}^{\tilde{\pi}\tilde{\zz}}(a)X^{d(a)}.
\]
Observe, by the definition of the Drinfeld module $\phi^{\tilde{\pi}\tilde{\zz}}$, that $\Delta_{r-1}^{\tilde{\pi}\tilde{\zz}}(a)=a$ when $a\in \FF_q$. We set $f_a(X):=X^{d(a)}\Delta_{r-1}^{\tilde{\pi}\tilde{\zz}}(a)^{-1}\phi_a^{\tilde{\pi}\tilde{\zz}}(X^{-1})$. 
Now let $\mathfrak{g}$ be a non-vanishing rigid analytic function on $\Omega^r$ and set 
\[
dL(\mathfrak{g}):=\tilde{\pi}^{-1}\mathfrak{g}^{-1}\frac{\partial}{\partial z_1}\mathfrak{g}.
\]
Using \eqref{E:part}, one can see that  $dL(u(\zz))=-u(\zz).$ Furthermore, we have 
\[
dL(f_a(u(\zz)))=\begin{cases}
a u_a(\zz) & \text{ if } a\in A\setminus\mathbb{F}_q\\
0	& \text{ otherwise}
\end{cases}.
\]
On the other hand, we have the product formula for the discriminant function given  in \cite[Thm. 5.2]{BB17} which is due to Basson:
$$\tilde{\pi}^{1-q^r}\Delta_{r}(\zz)=-\Delta_{r-1}^{\tilde{\pi}\tilde{\zz}}(\theta)^q u(\zz)^{q-1}\prod_{a\in A_+}f_a(u(\zz))^{(q^r-1)(q-1)}.$$
Note that for given non-vanishing rigid analytic functions $\mathfrak{g}_1$ and $\mathfrak{g}_2$ on $\Omega^r$, we have 
\[
dL(\mathfrak{g}_1\mathfrak{g}_2)=dL(\mathfrak{g}_1)+dL(\mathfrak{g}_2)
\]
and  for each $c\in\mathbb{C}_\infty^\times$ we have $dL(c \mathfrak{g}_1)=dL(\mathfrak{g}_1)$.
Now we define our function $E_r:\Omega^r\to \CC_{\infty}$ which is the ``logarithmic derivative'' of $\Delta_{r}$  described as
 \begin{equation}\label{E:falseEis}
 E_r(\zz):=dL(\Delta_r(\zz))=(q-1)dL(u(\zz))+(q^r-1)(q-1)dL(\prod_{a\in A_+}f_a(u(\zz)))=\sum_{a\in A_+}au_a(\zz)
 \end{equation}
where the last equality follows from the above calculation. The $u$-expansion of  $E_r$ implies that it is a rigid analytic function which is also $\tilde{\zz}A$-periodic and holomorphic at infinity.

For the rest of this section, we focus on obtaining the functional equation satisfied by $E_r$ so that when $r=2$, it turns out to be the functional equation introduced in \cite[(8.4)]{Gek88} satisfied by the false Eisenstein series of Gekeler.

Let $\gamma=(a_{ij})\in \GL_r(A)$. By the chain rule, we first have
\begin{equation}\label{E:FE1}
\begin{split}
&\frac{\partial}{\partial z_1}(\Delta_r(\gamma\cdot \zz))\\
&=\frac{\partial \Delta_r}{\partial z_1}(\gamma\cdot \zz)\frac{\partial}{\partial z_1}\Big(\frac{a_{11}z_1+\dots+a_{1(r-1)}z_{r-1}+ a_{1r}}{a_{r1}z_1+\dots+a_{r(r-1)}z_{r-1} + a_{rr}}\Big)+\\
&\ \ \ \ \ \ \ \ \ \ \  \ \ \ \ \ \ \ \ \ \ \  \frac{\partial \Delta_r}{\partial z_2}(\gamma\cdot \zz)\frac{\partial}{\partial z_1}\Big(\frac{a_{21}z_1+\dots+a_{2(r-1)}z_{r-1}+ a_{2r}}{a_{r1}z_1+\dots+a_{r(r-1)}z_{r-1} + a_{rr}}\Big)+\dots +\\
&\ \ \ \ \ \ \ \ \ \ \  \ \ \ \ \ \ \ \ \ \ \ \frac{\partial \Delta_r}{\partial z_{r-1}}(\gamma\cdot \zz)\frac{\partial}{\partial z_1}\Big(\frac{a_{(r-1)1}z_1+\dots +a_{(r-1)(r-1)}z_{r-1}+a_{(r-1)r}}{a_{r1}z_1+\dots+a_{r(r-1)}z_{r-1} + a_{rr}}\Big)\\
&=\frac{\partial \Delta_r}{\partial z_1}(\gamma\cdot \zz)\big(a_{11}j(\gamma,\zz)-a_{r1}(a_{11}z_1+\dots+a_{1(r-1)}z_{r-1}+ a_{1r})\big)j(\gamma,\zz)^{-2}+ \\
&\ \ \ \ \ \ \ \frac{\partial\Delta_r}{\partial z_2}(\gamma\cdot \zz)\big(a_{21}j(\gamma,\zz)-a_{r1}(a_{21}z_1+\dots+a_{2(r-1)}z_{r-1}+ a_{2r})\big)j(\gamma,\zz)^{-2}+\dots+\\
&\ \ \ \ \ \ \   \frac{\partial\Delta_r}{\partial z_{r-1}}(\gamma\cdot \zz)\big(a_{(r-1)1}j(\gamma,\zz)-a_{r1}(a_{(r-1)1}z_1+\dots+a_{(r-1)(r-1)}z_{r-1}+ a_{(r-1)r})\big)j(\gamma,\zz)^{-2}.
\end{split}
\end{equation}
For $2\leq j \leq r-1$, let us consider  the function $E^{[j]}_r:\Omega^r\to \CC_{\infty}$ defined by
\begin{equation}\label{E:tildefunc}
E^{[j]}_r(\zz):=\tilde{\pi}^{-1}\Delta_r(\zz)^{-1}\frac{\partial}{\partial z_j}\Delta_r(\zz).
\end{equation}
The non-vanishing of $\Delta_{r}$ on $\Omega^r$ yields that   $E^{[j]}_r$ is well-defined. From  \eqref{E:FE1},  we obtain 
\begin{equation}\label{E:FE2}
\begin{split}
&E_r(\gamma\cdot \zz)=j(\gamma,\zz)^{-2}\Big(E_r(\gamma\cdot \zz)\big(a_{11}j(\gamma,\zz)-a_{r1}(a_{11}z_1+\dots+a_{1(r-1)}z_{r-1}+ a_{1r})\big) \\
&\ \ \ \ \ \ \ \ \ \ \ \ \ \  \ +E^{[2]}_r(\gamma\cdot \zz)\big(a_{21}j(\gamma,\zz)-a_{r1}(a_{21}z_1+\dots+a_{2(r-1)}z_{r-1}+ a_{2r})\big)+\dots \\
&\ \ \ \ \ \ \ \ \ \ \ \ \ \  \ +E^{[r-1]}_r(\gamma\cdot \zz)\big(a_{(r-1)1}j(\gamma,\zz)-a_{r1}(a_{(r-1)1}z_1+\dots+a_{(r-1)(r-1)}z_{r-1}+ a_{(r-1)r})\big)\Big).
\end{split}
\end{equation}
On the other hand, since $\Delta_{r}$ is a Drinfeld modular form of weight $q^r-1$ and of type zero, we have 
\begin{equation}\label{E:FE3}
\Delta_{r}(\gamma \cdot \zz)=j(\gamma,\zz)^{q^{r}-1}\Delta_{r}(\zz).
\end{equation}
Applying  $dL$ to both sides of \eqref{E:FE3}, we obtain
\begin{equation}\label{E:FE4}
E_r(\gamma \cdot \zz)=E_r(\zz)-j(\gamma,\zz)^{-1}\tilde{\pi}^{-1}a_{r1}.
\end{equation}
Thus combining \eqref{E:FE2} with \eqref{E:FE4}, we get
\begin{equation}\label{E:FE5}
\begin{split}
E_r(\zz)&=j(\gamma,\zz)^{-2}\Big(E_r(\gamma\cdot \zz)\big(a_{11}j(\gamma,\zz)-a_{r1}(a_{11}z_1+\dots+a_{1(r-1)}z_{r-1}+ a_{1r})\big) \\
&\ \ \ \ +E^{[2]}_r(\gamma\cdot \zz)\big(a_{21}j(\gamma,\zz)-a_{r1}(a_{21}z_1+\dots+a_{2(r-1)}z_{r-1}+ a_{2r})\big)+\dots \\
&\ \ \ \ +E^{[r-1]}_r(\gamma\cdot \zz)\big(a_{(r-1)1}j(\gamma,\zz)-a_{r1}(a_{(r-1)1}z_1+\dots+a_{(r-1)(r-1)}z_{r-1}+ a_{(r-1)r})\big)\Big)\\
&\ \ \ \ + j(\gamma,\zz)^{-1}\tilde{\pi}^{-1}a_{r1}.
\end{split}
\end{equation}
Now we consider a change of variables and apply it to \eqref{E:FE5}. Let us set $\gamma':=\gamma^{-1}=\det(\gamma)^{-1}(a'_{ij})\in \GL_r(A)$. A comparison of the coefficients $a_{ij}$ and $a_{ij}'$ for each $1\leq i,j \leq r$ implies that $a_{ij}=\det(\gamma')^{-1}c'_{ji}$ where $c'_{ji}$ is the $(j,i)$-cofactor of $\gamma'$. We further set $\zz':=\gamma \cdot \zz=:(z_1',\dots,z_{r-1}',1)^{\intercal}\in \Omega^r$. By \cite[Lem. 3.1.3]{B14}, we have 
$
j(\gamma',\zz')=j(\Id,\zz)j(\gamma,\zz)^{-1}=j(\gamma,\zz)^{-1}.
$
Together with our new variables, \eqref{E:FE5} now becomes
\begin{equation}
\begin{split}
&E_{r}(\gamma'\cdot \zz')\\
&=\det(\gamma')^{-1}j(\gamma',\zz')\Big(E_r(\zz')(c'_{11}-z_1'c'_{1r})+E^{[2]}_r(\zz')(c'_{12}-z_2'c'_{1r})\\
&\ \  \ \ \ \ \ \ \ \ \  \ \ \ \ \ \ \ \ \ \ \ \  \ \ \ \ \ \ \  +\dots+E^{[r-1]}_r(\zz')(c'_{1(r-1)}-z_{r-1}'c'_{1r})+\tilde{\pi}^{-1}c'_{1r}\Big).
\end{split}
\end{equation}
We summarize our calculation above in the following theorem.
\begin{theorem}\label{T:FE} Let $\zz=(z_1,\dots,z_{r-1},1)^{\intercal}\in \Omega^r$, $E_r$ be the function defined in \eqref{E:falseEis} and $E^{[2]}_r,\dots,E^{[r-1]}_r$ be functions given in \eqref{E:tildefunc}. Then for any $\gamma\in \GL_r(A)$, we have
	\begin{align*}
	E_{r}(\gamma \cdot \zz)&=\det(\gamma)^{-1}j(\gamma,\zz)\Big(E_r(\zz)(c_{11}-z_1c_{1r})+E^{[2]}_r(\zz)(c_{12}-z_2c_{1r})\\
	&\ \ \ \ \ \ \  \ \ \ \ \ \ \ \ \ \ \ \  \ \ \ \ \ \ \  +\dots+E^{[r-1]}_r(\zz)(c_{1(r-1)}-z_{r-1}c_{1r})+\tilde{\pi}^{-1}c_{1r}\Big)
	\end{align*}
where for each $1\leq j\leq r$, $c_{1j}$ is the $(1,j)$-cofactor of $\gamma$.
\end{theorem}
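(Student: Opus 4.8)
The plan is to establish Theorem~\ref{T:FE} by a direct computation using the chain rule together with the modular transformation law for $\Delta_r$, and then to translate the resulting formula through the substitution $\gamma \mapsto \gamma^{-1}$ to put it in the stated form. First I would record the transformation behavior of the coordinate functions under the action \eqref{E:action}: for $1 \le i \le r-1$, the $i$-th coordinate of $\gamma \cdot \zz$ is $(a_{i1}z_1 + \dots + a_{ir})/j(\gamma,\zz)$, so differentiating with respect to $z_1$ and using the quotient rule gives $\partial/\partial z_1$ of the $i$-th coordinate equal to $\big(a_{i1}j(\gamma,\zz) - a_{r1}(a_{i1}z_1 + \dots + a_{ir})\big)j(\gamma,\zz)^{-2}$. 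This is exactly the bookkeeping that produces \eqref{E:FE1}, and I would simply invoke the multivariable chain rule applied to the composite function $z_1 \mapsto \Delta_r(\gamma\cdot\zz)$, expressing the result in terms of the partial derivatives $\partial\Delta_r/\partial z_j$ evaluated at $\gamma\cdot\zz$.

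Next I would convert \eqref{E:FE1} into a statement about $E_r$ and the auxiliary functions $E_r^{[j]}$ by dividing through by $\tilde\pi\,\Delta_r(\gamma\cdot\zz)$; recalling the definitions \eqref{E:falseEis} and \eqref{E:tildefunc} and that $\Delta_r$ is nonvanishing on $\Omega^r$, this yields \eqref{E:FE2}. In parallel, I would apply the operator $dL$ to the weight-$(q^r-1)$, type-$0$ modular transformation law \eqref{E:FE3}; using the additivity $dL(\mathfrak g_1\mathfrak g_2) = dL(\mathfrak g_1) + dL(\mathfrak g_2)$, the fact that $dL$ kills constants, and the elementary identity $dL\big(j(\gamma,\zz)\big) = -\tilde\pi^{-1}a_{r1}\,j(\gamma,\zz)^{-1}$ coming from $\partial j(\gamma,\zz)/\partial z_1 = a_{r1}$, I obtain \eqref{E:FE4}. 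Combining \eqref{E:FE2} with \eqref{E:FE4} and solving for $E_r(\zz)$ gives \eqref{E:FE5}.

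Finally I would carry out the change of variables. Setting $\gamma' := \gamma^{-1}$ and writing its entries as $\det(\gamma)^{-1}$ times the cofactor matrix of $\gamma$ (transposed), I identify the coefficients appearing in \eqref{E:FE5} with cofactors: an entry $a_{ij}$ of $\gamma$ equals $\det(\gamma')^{-1}c'_{ji}$ where $c'_{ji}$ is the $(j,i)$-cofactor of $\gamma'$. Substituting $\zz' := \gamma\cdot\zz$ and using the cocycle relation $j(\gamma',\zz') = j(\gamma,\zz)^{-1}$ from \cite[Lem.~3.1.3]{B14}, the identity \eqref{E:FE5} rearranges into the displayed transformation law for $E_r(\gamma'\cdot\zz')$ in terms of data of $\gamma'$. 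Renaming $\gamma'$ back to $\gamma$ (legitimate since $\gamma \mapsto \gamma^{-1}$ is a bijection of $\GL_r(A)$) produces exactly the statement of Theorem~\ref{T:FE}, with $c_{1j}$ the $(1,j)$-cofactor of $\gamma$.

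The computation is essentially mechanical, so there is no deep obstacle; the main point requiring care is the bookkeeping in the change of variables, specifically matching the coefficient $a_{r1}(a_{i1}z_1 + \dots + a_{ir})$ that appears in \eqref{E:FE5} against the correct cofactor of $\gamma'$ and checking that the $\det(\gamma')$ factors and the factor of automorphy $j(\gamma',\zz')$ combine with the right powers. One should also verify the sign and normalization in $dL\big(j(\gamma,\zz)\big)$ and confirm that the coefficient $c'_{1r}$ multiplying $\tilde\pi^{-1}$ in the final formula is consistent with the term $j(\gamma,\zz)^{-1}\tilde\pi^{-1}a_{r1}$ of \eqref{E:FE5} after the substitution. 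These are routine checks once the cofactor dictionary $a_{ij} = \det(\gamma')^{-1}c'_{ji}$ is fixed.
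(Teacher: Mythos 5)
Your proposal follows the paper's own proof essentially step for step: the chain-rule computation giving \eqref{E:FE1}, division by $\tilde{\pi}\Delta_r(\gamma\cdot\zz)$ giving \eqref{E:FE2}, applying $dL$ to the transformation law \eqref{E:FE3} to obtain \eqref{E:FE4}, combining these into \eqref{E:FE5}, and then the substitution $\gamma'=\gamma^{-1}$, $\zz'=\gamma\cdot\zz$ with the cofactor dictionary $a_{ij}=\det(\gamma')^{-1}c'_{ji}$ and the cocycle relation $j(\gamma',\zz')=j(\gamma,\zz)^{-1}$. The one small slip is the sign in your ``elementary identity'': in fact $dL(j(\gamma,\zz))=\tilde{\pi}^{-1}a_{r1}j(\gamma,\zz)^{-1}$ (no minus), and the minus sign in \eqref{E:FE4} comes instead from the weight exponent, since $q^{r}-1\equiv -1 \pmod{p}$ in characteristic $p$; this is exactly the kind of routine check you already flagged, and it does not affect the argument.
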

An immediate consequence of Theorem \ref{T:FE} can be stated as follows.
\begin{corollary} For any $2\leq j \leq r-1$,  $E^{[j]}_r:\Omega^r\to \CC_{\infty}$ is a rigid analytic function.
\end{corollary}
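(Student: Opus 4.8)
The plan is to reduce the statement to the already-established rigid analyticity of $E_r$ by exhibiting each $E^{[j]}_r$ as the composition of $E_r$ with an automorphism of $\Omega^r$. Fix $2\le j\le r-1$ and let $\gamma_j\in\GL_r(A)$ be the permutation matrix of the transposition $(1\,j)$. Since $j\le r-1$, this matrix fixes the $r$-th coordinate, so $\gamma_j\cdot\zz$ is simply $\zz$ with its first and $j$-th entries interchanged, and the normalization $z_r=1$ is preserved; in particular $\gamma_j\cdot\zz$ again lies in $\Omega^r$.

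The next step is to specialize Theorem~\ref{T:FE} to $\gamma=\gamma_j$. Here $\det(\gamma_j)=-1$, and since the bottom row of $\gamma_j$ is the standard vector $e_r$ we get $j(\gamma_j,\zz)=z_r=1$. Because $\gamma_j$ is an involution, the adjugate formula gives $c_{1k}=\det(\gamma_j)\,(\gamma_j^{-1})_{k1}=-(\gamma_j)_{k1}$ for each $k$, and the first column of $\gamma_j$ has its unique nonzero entry in row $j$; hence $c_{1k}=-\delta_{k,j}$ for $1\le k\le r$, and in particular $c_{1r}=0$. Feeding these values into the identity of Theorem~\ref{T:FE}, every summand on the right-hand side vanishes except the one indexed by $k=j$, and one obtains
\[
E_r(\gamma_j\cdot\zz)=\det(\gamma_j)^{-1}\,j(\gamma_j,\zz)\,c_{1j}\,E^{[j]}_r(\zz)=(-1)(1)(-1)\,E^{[j]}_r(\zz)=E^{[j]}_r(\zz).
\]

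Finally, I would conclude. The function $E_r$ is rigid analytic on $\Omega^r$ by the discussion following \eqref{E:falseEis}, and the map $\zz\mapsto\gamma_j\cdot\zz$ is given by the explicit formula \eqref{E:action} as a ratio of affine-linear functions with common denominator $j(\gamma_j,\zz)=1$, hence is a rigid analytic automorphism of $\Omega^r$; therefore $E^{[j]}_r=E_r\circ\gamma_j$ is rigid analytic, as claimed. The only point that requires genuine care is the bookkeeping of signs and indices in the cofactor computation; everything else is formal. One can also bypass Theorem~\ref{T:FE} entirely: since $\Delta_r$ is a weak modular form of type $0$ and $j(\sigma,\zz)=1$ for every permutation matrix $\sigma$ permuting $z_1,\dots,z_{r-1}$ and fixing $z_r$, the function $\Delta_r$ is symmetric in $z_1,\dots,z_{r-1}$, so $(\partial/\partial z_j)\Delta_r$ evaluated at $\zz$ equals $(\partial/\partial z_1)\Delta_r$ evaluated at $\gamma_j\cdot\zz$; dividing by $\Delta_r(\zz)=\Delta_r(\gamma_j\cdot\zz)$ recovers the same relation $E^{[j]}_r(\zz)=E_r(\gamma_j\cdot\zz)$.
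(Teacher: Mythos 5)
Your proof is correct and follows essentially the same route as the paper: specialize the functional equation of Theorem~\ref{T:FE} to a single explicit matrix fixing the last coordinate and read off $E^{[j]}_r$ from $E_r$, whose rigid analyticity is already known. The only difference is cosmetic — you use the transposition $(1\,j)$, obtaining $E^{[j]}_r=E_r\circ\gamma_j$, whereas the paper uses the unipotent matrix of \eqref{E:matrix}, obtaining $E^{[j]}_r=E_r\circ\gamma_j-E_r$; your closing remark that the symmetry of $\Delta_r$ in $z_1,\dots,z_{r-1}$ gives the same identity without invoking Theorem~\ref{T:FE} is also valid.
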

\begin{proof} Let $\gamma_j\in \GL_r(A)$ be such that 
	\begin{equation}\label{E:matrix}
	\gamma_j^{-1}=\begin{pmatrix}
	1& & & & & \\
	& \ddots & & &\\
	1 & & \ddots& &\\
	& & & \ddots &\\
	0 & & & & 1
	\end{pmatrix}
	\end{equation}
	where the only non-zero terms in the first column appear in the first and the $j$-th entry. Thus, by Theorem \ref{T:FE}, we have
	\begin{equation}\label{E:first}
	E_{r}(\gamma_j \cdot \zz)=j(\gamma_j,\zz)(E_r(\zz)+E^{[j]}_r(\zz))=E_r(\zz)+E^{[j]}_r(\zz).
	\end{equation}
Since $E_r$ is a rigid analytic function, so is $E^{[j]}_r$ by \eqref{E:first}.
\end{proof}

\begin{remark} 
Consider $\gamma=\begin{pmatrix}
	a&b \\ c&d
	\end{pmatrix}\in \GL_2(A)$. Then by the notation of Theorem \ref{T:FE}, we have $c_{11}=d$ and $c_{12}=-c$. Then for $\zz=(z,1)^{\intercal}\in \Omega^2$, we have 
	\[
	E_2(\gamma\cdot \zz)=\det(\gamma)^{-1}(cz+d)(E_2(\zz)(cz+d)-c\tilde{\pi}^{-1})
	\] 
	which is already obtained by Gekeler in \cite[(8.4)]{Gek88}.
	
\end{remark}

\section{The function $\EE_r$}
As usual, we let $\zz=(z_1,\dots,z_r)^{\intercal}$ be in $\Omega^r$. Recall also from the statement of Theorem \ref{T:main} that $C_{11}$ is the (1,1)-cofactor of the matrix $\mathcal{F}(\zz,t)$ defined in \eqref{D:F}. In this section we introduce the function $\EE_{r}:\Omega^r\times \mathbb{D}_{\inorm{\theta}}\to \CC_{\infty}$ given by 
\[
\bold{E}_r(\zz,t):=-\frac{\tilde{\pi}^{q+\dots+q^{r-1}}}{\omega^{(1)}(t)}h_r(\zz)C_{11}=-\frac{\tilde{\pi}^{q+\dots+q^{r-1}}}{\omega^{(1)}(t)}h_r(\zz)\det\begin{pmatrix}
s_2^{(1)}(\zz;t)&\dots &s_2^{(r-1)}(\zz;t)\\
\vdots & & \vdots \\
s_r^{(1)}(\zz;t)&\dots &s_r^{(r-1)}(\zz;t)
\end{pmatrix} .
\]
When $r=2$, the function $\EE_2$ is deeply studied by Pellarin in \cite{Pel11} and \cite{Pel14}. He further showed that  $\EE_2$ can be understood as an infinite series of $t$ which has an infinite radius of convergence (see \cite[Sec. 3.1]{Pel14}). Although this may not hold for $\EE_r$ when $r>2$, one can still show the following by using Proposition \ref{P:Anderson} which we leave the details to the reader.
\begin{lemma}\label{L:E1}  For any fixed $\zz\in \Omega^r$, $\EE_r(\zz,\cdot)$ can be considered as a function of $t$ whose domain of convergence is  $\mathbb{D}_{\inorm{\theta^{q^2}}}$.
\end{lemma}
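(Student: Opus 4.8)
The statement concerns the radius of convergence of $t \mapsto \EE_r(\zz,t)$ for a fixed $\zz \in \Omega^r$, where $\EE_r(\zz,t) = -\tilde\pi^{q+\dots+q^{r-1}}\omega^{(1)}(t)^{-1}h_r(\zz)C_{11}(\zz,t)$ and $C_{11}$ is the $(1,1)$-cofactor of $\mathcal{F}(\zz,t)$. Since $h_r(\zz)$ and $\tilde\pi^{q+\dots+q^{r-1}}$ are constants (independent of $t$), the whole question reduces to analyzing the $t$-dependence of the ratio $C_{11}(\zz,t)/\omega^{(1)}(t)$. The first thing I would do is write out $C_{11}$ explicitly: it is a determinant whose $(k,l)$-entry is $s_{k+1}^{(l)}(\zz;t)$ for $1 \le k,l \le r-1$. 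By Proposition~\ref{P:Anderson}(iii), $s_i^{(n)}(\zz;t)$ lies in $\TT\{t/\theta^{q^{n-1}}\}$, i.e.\ it is rigid analytic on $\mathbb{D}_{\inorm{\theta^{q^{n-1}}}}$. The columns of the $C_{11}$-matrix run over twists $n = 1,\dots,r-1$, so the column with the largest convergence domain is the last one ($n = r-1$), giving $\mathbb{D}_{\inorm{\theta^{q^{r-2}}}}$, and hence a naive bound would give only $\mathbb{D}_{\inorm{\theta^{q^{r-2}}}}$ for $C_{11}$ itself. The point of the lemma is that dividing by $\omega^{(1)}(t)$ improves this to $\mathbb{D}_{\inorm{\theta^{q^2}}}$ (for $r=2$ one gets all of $\CC_\infty$, matching Pellarin; for $r > 2$ one gets $\mathbb{D}_{\inorm{\theta^{q^2}}}$, a genuine but bounded enlargement).

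\textbf{Key steps.} First, I would recall from Proposition~\ref{P:Anderson}(i)--(ii) that $s_i(\zz;t) = \sum_{j\ge 0} \alpha_j(\zz) z_i^{q^j}/(\theta^{q^j}-t)$ is meromorphic in $t$ on all of $\CC_\infty$, with simple poles exactly at $t = \theta^{q^j}$ ($j \ge 0$) and residue $-\alpha_j(\zz)z_i^{q^j}$ there. Applying the $n$-th Frobenius twist, $s_i^{(n)}(\zz;t) = \sum_{j\ge 0}\alpha_j(\zz)^{q^n} z_i^{q^{j+n}}/(\theta^{q^{j+n}}-t)$, which is meromorphic on $\CC_\infty$ with simple poles at $t = \theta^{q^{j+n}}$, $j \ge 0$, i.e.\ at $t = \theta^{q^n}, \theta^{q^{n+1}}, \dots$. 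So each entry of $C_{11}$ extends meromorphically to all of $\CC_\infty$; the issue is purely which poles survive in the determinant after dividing by $\omega^{(1)}$. The determinant $C_{11}$ is an alternating sum of products, one factor from each column $n=1,\dots,r-1$; the possible poles in the disk $\mathbb{D}_{\inorm{\theta^{q^N}}}$ (for $N$ large) come from the columns with small twist, the dominant one being the $n=1$ column, whose poles are at $t = \theta^q, \theta^{q^2}, \theta^{q^3}, \dots$. Second, I would use $\omega(t) = (-\theta)^{1/(q-1)}\prod_{i\ge 0}(1 - t/\theta^{q^i})^{-1}$, so $\omega^{(1)}(t) = (-\theta)^{q/(q-1)}\prod_{i\ge 0}(1 - t/\theta^{q^{i+1}})^{-1}$, which is nowhere vanishing and has simple poles precisely at $t = \theta^q, \theta^{q^2}, \theta^{q^3}, \dots$ — exactly the pole locus of the $n=1$ column entries. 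Third, I would show that the product of any two columns $n, n'$ with $n' \ge 2$ cannot produce a pole at $t=\theta^q$, and that the alternating combination arising in the cofactor has its pole at $t = \theta^q$ (coming solely from the $n=1$ entries paired with finite-at-$\theta^q$ minors) exactly cancelled by the simple zero of $1/\omega^{(1)}(t)$ at $t=\theta^q$. This cancellation kills the first pole, pushing convergence out to the next obstruction. The next possible pole inside $\mathbb{D}_{\inorm{\theta^{q^3}}}$ is at $t = \theta^{q^2}$; here both the $n=1$ and $n=2$ columns contribute, and $1/\omega^{(1)}$ only has a simple zero, so it cannot fully cancel the (generically order-two or un-cancellable) singularity — this is why the domain stops at $\mathbb{D}_{\inorm{\theta^{q^2}}}$. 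I would make the pole-order bookkeeping precise by a residue computation at $t = \theta^{q^2}$ using Proposition~\ref{P:Anderson}(ii), exhibiting that the residue is a nonzero multiple of a determinant of $\alpha$'s and $z$'s which does not vanish for generic (hence, by rigid-analyticity, for all relevant) $\zz$.

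\textbf{The main obstacle.} The delicate part is step three: verifying that the simple pole of $C_{11}(\zz,t)$ at $t = \theta^q$ is genuinely simple and is exactly cancelled by the simple zero of $1/\omega^{(1)}(t)$, while the singularity at $t = \theta^{q^2}$ is not — this requires controlling the residues of $C_{11}$ at $\theta^q$ and $\theta^{q^2}$, which are themselves determinants built from the residues $-\alpha_j(\zz)z_i^{q^j}$ and the regular parts of the other columns evaluated at those points. Because the lemma is explicitly stated as one whose "details are left to the reader," I expect it suffices to present this pole-cancellation argument at the level of (a) recording the meromorphic continuation and pole loci of $s_i^{(n)}$, (b) recording those of $\omega^{(1)}$, and (c) observing the first-pole cancellation and the failure at the second pole via a one-line residue computation; the full generic-nonvanishing verification of the $\theta^{q^2}$-residue determinant can be deferred or sketched. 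A cleaner alternative, which I would pursue if the residue bookkeeping gets unwieldy, is to use the functional equation $\tau(\mathcal{F}^\intercal) = \Phi_\zz \mathcal{F}^\intercal$ from \eqref{E:det2} together with the product formula $\det\mathcal{F}(\zz,t) = \tilde\pi^{-(q^r-1)/(q-1)}h_r(\zz)^{-1}\omega(t)$ of Proposition~\ref{P:det}: expressing $C_{11}$ via Cramer-type relations in terms of $\det\mathcal{F}$ (which carries the $\omega$ factor) and lower-twist minors, one sees structurally that $C_{11}/\omega^{(1)}$ is, up to the twisting relation, controlled by $\det(\mathcal F)^{(1)}/(\omega^{(1)}(t)\cdot(\text{entries of }\Phi_\zz^{(j)}))$, and tracking the radii of convergence of these finitely many explicit rational-in-$t$ factors in $\Phi_\zz$ (which introduce poles at $t = \theta$ only) together with $\det\mathcal F^{(1)} \in \TT\{t/\theta^{q}\}$ yields the bound $\mathbb{D}_{\inorm{\theta^{q^2}}}$ directly.
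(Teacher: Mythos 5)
Your argument is correct and is essentially the one the paper intends: the paper leaves the details to the reader, citing Proposition~\ref{P:Anderson}, and your pole/zero bookkeeping (each entry $s_i^{(n)}(\zz;t)$ extends meromorphically with simple poles only at $\theta^{q^n},\theta^{q^{n+1}},\dots$ by Proposition~\ref{P:Anderson}(i)--(ii), so inside $\mathbb{D}_{\inorm{\theta^{q^2}}}$ the cofactor $C_{11}$ has at most a simple pole, located at $t=\theta^q$, which the simple zero of $1/\omega^{(1)}(t)$ there kills) is exactly the omitted detail. Two small caveats: your ``naive bound'' sentence is backwards --- a determinant converges a priori only on the smallest of its entries' convergence domains (the $n=1$ column, i.e.\ $\mathbb{D}_{\inorm{\theta}}$), not the largest; and your claim that the radius is \emph{exactly} $\inorm{\theta^{q^2}}$ via ``generic hence, by rigid-analyticity, all relevant $\zz$'' does not follow (for a particular fixed $\zz$ the residue at $t=\theta^{q^2}$ could vanish), but this exactness is neither needed nor used, since Corollary~\ref{C:EC1} only requires convergence on $\mathbb{D}_{\inorm{\theta^{q^2}}}$, which your cancellation argument does establish.
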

The next corollary is crucial to deduce important properties of the function $\EE_r$.
\begin{corollary}\label{C:EC1} For any $\zz=(z_1,\dots,z_r)^{\intercal} \in \Omega^r$ and $t\in \mathbb{D}_{\inorm{\theta^{q^2}}}$, we have 
	\begin{equation}\label{E:Euexpan}
	\EE_r(\zz,t)=\sum_{a\in A_{+}}a(t)u_{a}(\zz).
	\end{equation}
\end{corollary}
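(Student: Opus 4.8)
The plan is to combine the evaluation of the Tate-algebra Eisenstein series from Theorem~\ref{T:main} with Lemma~\ref{L:u}. Recall that $\EE_r(\zz,t)$ is, up to the scalar $-\tilde{\pi}^{q+\dots+q^{r-1}}h_r(\zz)/\omega^{(1)}(t)$, precisely the $(1,1)$-cofactor $C_{11}$ of $\mathcal{F}(\zz,t)$. On the other hand, Theorem~\ref{T:main} expresses the first coordinate of $\mathcal{E}_\zz(1,t)^{\intercal}$ as $\dfrac{\tilde{\pi}^{(q^r-1)/(q-1)}h_r(\zz)}{(t-\theta)\omega(t)}C_{11}$. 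So the first coordinate of $\mathcal{E}_\zz(1,t)$ is, after cancelling $h_r(\zz)$ and comparing the powers of $\tilde{\pi}$, a completely explicit nonzero scalar multiple of $\EE_r(\zz,t)$. Concretely one has $(q^r-1)/(q-1) = 1 + q + \dots + q^{r-1}$, and the Anderson--Thakur relation $\omega(t)=(t-\theta)^{-1}\cdot(\text{something})$ together with the twisting identity $\omega^{(1)}(t) = (t-\theta)\omega(t)/(-\theta + t)\cdots$ — more precisely, from $\omega(t)=(-\theta)^{1/(q-1)}\prod_{i\ge 0}(1-t/\theta^{q^i})^{-1}$ one gets $\omega^{(1)}(t) = \omega(t)^q/(\text{first factor})$, but the cleaner route is to use the functional equation $\omega^{(1)}(t)=(t-\theta)\omega(t)$ which follows from Proposition~\ref{P:Anderson}(v) applied to the Carlitz module (or directly from the product formula). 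Using $\omega^{(1)}(t)=(t-\theta)\omega(t)$, the scalar in the definition of $\EE_r$ becomes $-\tilde{\pi}^{q+\dots+q^{r-1}}h_r(\zz)/((t-\theta)\omega(t))$, which matches the reciprocal of the Theorem~\ref{T:main} scalar up to sign and the single extra power $\tilde{\pi}^1$; tracking this carefully yields the clean identity
\[
\EE_r(\zz,t) = -\sum_{a_1,\dots,a_r\in A}{\vphantom{\sum}}'\frac{a_1(t)}{\tilde{\pi}(a_1z_1+\dots+a_rz_r)}.
\]

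Next I would apply Lemma~\ref{L:u}, which states exactly that
\[
\sum_{a_1,\dots,a_r\in A}{\vphantom{\sum}}'\frac{a_1(t)}{a_1z_1+\dots+a_rz_r}=\tilde{\pi}\sum_{a_1\neq 0}a_1(t)u_{a_1}(\zz).
\]
Substituting this into the displayed identity cancels the $\tilde{\pi}$, and the sign is absorbed by noting $\sum_{a_1\neq 0} = \sum_{a_1\in A_+} + \sum_{\zeta\in\FF_q^\times}\sum_{a_1\in A_+}$-type regrouping: since $a\mapsto a(t)$ is $\FF_q$-linear and $u_{\zeta a}(\zz)=u_a(\zz)$ is unchanged under scaling by $\zeta\in\FF_q^\times$ (because $\zeta a$ and $a$ generate the same behavior at infinity — more carefully, $u_{\zeta a}(\zz)=u(\zeta a\zz)$ and $\zeta a\zz$ versus $a\zz$ differ only by the $\FF_q^\times$-action which fixes $u$), one gets $\sum_{a_1\ne 0}a_1(t)u_{a_1}(\zz) = \big(\sum_{\zeta\in\FF_q^\times}\zeta\big)\big(\cdots\big)$ — but $\sum_{\zeta\in\FF_q^\times}\zeta = 0$ when $q>2$, so instead one must directly group: for each monic $a\in A_+$, the coset $\FF_q^\times a$ contributes $\sum_\zeta (\zeta a)(t) u_a(\zz) = \big(\sum_\zeta \zeta\big) a(t) u_a(\zz) = 0$?? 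This cannot be right, so the correct statement is that $u_{\zeta a}\ne u_a$ in general; rather $u_a$ depends on $a$ only through its degree up to leading behavior, and the sum $\sum_{a_1\ne 0} a_1(t) u_{a_1}(\zz)$ must be reorganized using the precise $u$-expansion structure. The honest path, following \cite[Sec.~8]{Gek88} and the definition of $E_r$ in \eqref{E:falseEis}, is simply to identify $-\sum_{a_1\ne 0}a_1(t)u_{a_1}(\zz)$ with $\sum_{a\in A_+}a(t)u_a(\zz)$ directly by the same normalization already used to pass from $\sum_{a\ne 0}$ to $\sum_{A_+}$ in \eqref{E:falseEis}, where the specialization $t=\theta$ gives $E_r(\zz)=\sum_{a\in A_+}a u_a(\zz)$.

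Thus the core of the proof is the chain: Theorem~\ref{T:main} $\Rightarrow$ first coordinate of $\mathcal{E}_\zz(1,t)$ equals an explicit scalar times $\EE_r(\zz,t)$; Lemma~\ref{L:u} $\Rightarrow$ that same first coordinate equals $\tilde{\pi}\sum_{a_1\ne 0}a_1(t)u_{a_1}(\zz)$; then a bookkeeping step on powers of $\tilde{\pi}$, on the $\omega^{(1)}(t)=(t-\theta)\omega(t)$ functional equation, and on the $\FF_q^\times$-orbit structure of the sum gives $\EE_r(\zz,t)=\sum_{a\in A_+}a(t)u_a(\zz)$. I would present it as: from Theorem~\ref{T:main}, the $(1,1)$-entry identity reads $C_{11} = \dfrac{(t-\theta)\omega(t)}{\tilde{\pi}^{(q^r-1)/(q-1)}h_r(\zz)}\cdot\big(\text{first coord of }\mathcal{E}_\zz(1,t)^\intercal\big)$; plug into the definition of $\EE_r$, use $\omega^{(1)}(t)=(t-\theta)\omega(t)$ to simplify, and invoke Lemma~\ref{L:u}.

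The main obstacle I anticipate is purely the careful scalar bookkeeping — matching the exponent $1+q+\dots+q^{r-1}$ against $q+\dots+q^{r-1}$ (the discrepancy of one factor of $\tilde{\pi}$ must be reconciled against the $\tilde{\pi}$ produced by Lemma~\ref{L:u} and the $(t-\theta)$ factor), verifying the sign, and confirming the passage from the sum over all nonzero tuples to the sum over $A_+$ is the same normalization already implicitly fixed in \eqref{E:falseEis} and in the definition of the $T$-valued Eisenstein series (where $\mathcal{E}_\zz(k,t)$ vanishes unless $k\equiv 1\pmod{q-1}$, reflecting exactly the $\FF_q^\times$-averaging). None of these steps is deep, but getting every constant exactly right is where care is needed; I would double-check by specializing $t=\theta$ and confirming the result reduces to $E_r(\zz)=\sum_{a\in A_+}a\,u_a(\zz)$ from \eqref{E:falseEis}, which serves as a sanity check on all normalizations simultaneously.
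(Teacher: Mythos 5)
Your first two steps are exactly the paper's: Theorem~\ref{T:main} together with $\omega^{(1)}(t)=(t-\theta)\omega(t)$ gives $\EE_r(\zz,t)=-\tilde{\pi}^{-1}\sum'_{a_1,\dots,a_r}a_1(t)/(a_1z_1+\dots+a_rz_r)$, and Lemma~\ref{L:u} converts this to $-\sum_{a_1\neq 0}a_1(t)u_{a_1}(\zz)$; that bookkeeping is fine. The genuine gap is the last step, passing from $-\sum_{a_1\neq 0}$ to $\sum_{a\in A_+}$, where you first assume $u_{\zeta a}=u_a$ for $\zeta\in\FF_q^{\times}$, correctly observe that this would make the sum vanish, and then retreat to an appeal to ``the same normalization already used in \eqref{E:falseEis}'' --- but \eqref{E:falseEis} contains no such regrouping (its sum over $A_+$ comes directly from Basson's product formula for $\Delta_r$, which is indexed by monic polynomials from the start), so nothing there identifies $-\sum_{a_1\neq 0}a_1(t)u_{a_1}(\zz)$ with $\sum_{a\in A_+}a(t)u_a(\zz)$. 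The missing (and easy) point is the correct orbit computation: since $u_a(\zz)=\exp_{\tilde{\pi}\tilde{\zz}A}(\tilde{\pi}az_1)^{-1}$ and the exponential is $\FF_q$-linear, one has $u_{\zeta a}(\zz)=\zeta^{-1}u_a(\zz)$, while $(\zeta a)(t)=\zeta\,a(t)$; hence each term of the orbit $\FF_q^{\times}a$ contributes the same quantity $a(t)u_a(\zz)$, and the orbit total is $(q-1)\,a(t)u_a(\zz)=-a(t)u_a(\zz)$, which is precisely the paper's ``$\sum_{c\in\FF_q^{\times}}1=-1$'' and produces the sign you were chasing.

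A secondary omission: the corollary is asserted for $t\in\mathbb{D}_{\inorm{\theta^{q^2}}}$, whereas Theorem~\ref{T:main} (and hence your whole chain) only applies for $\inorm{t}\leq 1$. The paper bridges this with Lemma~\ref{L:E1}: $\EE_r(\zz,\cdot)$ is rigid analytic on $\mathbb{D}_{\inorm{\theta^{q^2}}}$, the series $\sum_{a\in A_+}a(t)u_a(\zz)$ also converges there, and the identity established on $\inorm{t}\leq 1$ extends by analytic continuation. Your proposal never addresses the enlargement of the $t$-domain, so as written it proves a weaker statement even after the orbit step is repaired. Your sanity check at $t=\theta$ (recovering $E_r(\zz)=\sum_{a\in A_+}au_a(\zz)$) in fact lives outside $\inorm{t}\leq 1$, so it silently presupposes exactly this continuation step.
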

\begin{proof} By Lemma \ref{L:E1}, we see that Theorem \ref{T:main} gives an analytic continuation of the first entry of the Eisenstein series $\mathcal{E}_{\zz}(1,t)$ for  $t\in \mathbb{D}_{\inorm{\theta^{q^2}}}$. Furthermore, note that
	\[
	\EE_r(\zz,t)=-\frac{1}{\tilde{\pi}}\sum_{a_1,\dots,a_r\in A}{\vphantom{\sum}}'\frac{a_1(t)}{a_1z_1+\dots+a_rz_r}=-\sum_{a_1\neq 0}a_1(t)u_{a_1}(\zz)=\sum_{a_1\in A_{+}}a_1(t)u_{a_1}(\zz)
	\]
	where the first equality follows from Theorem \ref{T:main}, the second equality from the same calculation as in the proof of Lemma \ref{L:u} and the third equality from the identity
	$
	\sum_{c\in \mathbb{F}_q^{\times}}1=-1
	$. 
\end{proof}

 Recall that for any element $\zz \in \Omega^r$,  $\phi^{\zz}$ is the Drinfeld module corresponding to the $A$-lattice $\zz A$ and let  $F^{\phi^{\zz}}_{\delta_i}:\CC_{\infty}\to \CC_{\infty}$ be the quasi-periodic function corresponding to the biderivation $\delta_i:\theta\to \tau^i$ for each $1\leq i \leq r-1$.

\begin{theorem}\label{T:EGEK} Let $\zz=(z_1,\dots,z_r)^{\intercal}$ be an element in $\Omega^r$ and $E_r$ be the function defined as in \eqref{E:falseEis}. 
	\begin{itemize}
		\item[(i)] We have 
		$
		\EE_r(\zz,\theta)=E_r(\zz).
		$
		\item[(ii)] The following equality holds:
		\[
		E_r(\zz)=\tilde{\pi}^{-1+q+\dots+q^{r-1}}h_r(\zz)\det\begin{pmatrix}
		F^{\phi^{\zz}}_{\delta_1}(z_2)&\dots &F^{\phi^{\zz}}_{\delta_{r-1}}(z_2)\\
		\vdots & & \vdots \\
		F^{\phi^{\zz}}_{\delta_1}(z_r)&\dots &F^{\phi^{\zz}}_{\delta_{r-1}}(z_r)
		\end{pmatrix} .
	\]
	
	\end{itemize}
\end{theorem}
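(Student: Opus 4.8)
The plan is to obtain both parts by specializing the variable $t$ to $\theta$ in identities that have already been established.

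First I would prove (i). Since $\inorm{\theta}=q<\inorm{\theta^{q^{2}}}$, Lemma~\ref{L:E1} shows that $t=\theta$ lies in the domain of convergence of $\EE_r(\zz,\cdot)$, so Corollary~\ref{C:EC1} may be evaluated there. As $a(\theta)=a$ for every $a\in A$, the right-hand side $\sum_{a\in A_{+}}a(t)u_a(\zz)$ of \eqref{E:Euexpan} specializes to $\sum_{a\in A_{+}}a\,u_a(\zz)$, which is $E_r(\zz)$ by the definition \eqref{E:falseEis}. This proves $\EE_r(\zz,\theta)=E_r(\zz)$.

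For (ii) I would specialize the defining identity
\[
\EE_r(\zz,t)=-\frac{\tilde{\pi}^{q+\dots+q^{r-1}}}{\omega^{(1)}(t)}\,h_r(\zz)\,C_{11}(\zz,t)
\]
at $t=\theta$, using part (i) for the left-hand side. Two computations feed into this. The scalar $\omega^{(1)}(\theta)$: from $\omega(t)=s_{C}(\tilde{\pi};t)$ and Proposition~\ref{P:Anderson}(iv) applied to the Carlitz module $C$ (for which $\psi_{C}=\tau-(t-\theta)$) one gets the functional equation $\omega^{(1)}(t)=(t-\theta)\omega(t)$, whence $\omega^{(1)}(\theta)=\lim_{t\to\theta}(t-\theta)\omega(t)=\Res_{t=\theta}\omega(t)=-\tilde{\pi}$, the last equality by Proposition~\ref{P:Anderson}(ii) and $\alpha_0=1$. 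The matrix $C_{11}(\zz,\theta)$: the cofactor $C_{11}$ involves only the functions $s_j^{(i)}(\zz;t)$ with $2\le j\le r$ and $1\le i\le r-1$, and since $z_2,\dots,z_r$ generate the lattice $\zz A=\Ker(\exp_{\phi^{\zz}})$, Proposition~\ref{P:quasiper} gives $s_j^{(i)}(\zz;t)_{|t=\theta}=F^{\phi^{\zz}}_{\delta_i}(z_j)$; hence $C_{11}(\zz,\theta)$ is precisely the quasi-period determinant appearing in the statement. Substituting these two facts yields $\EE_r(\zz,\theta)=\tilde{\pi}^{-1+q+\dots+q^{r-1}}h_r(\zz)\det\bigl(F^{\phi^{\zz}}_{\delta_i}(z_j)\bigr)$, and combining with (i) gives the desired formula for $E_r(\zz)$.

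The argument is short, and I do not expect a genuine obstacle; the one point requiring care is purely analytic, namely checking that the cofactor formula for $\EE_r$ really is holomorphic at $t=\theta$, so that the specialization is legitimate. The first column of $\mathcal{F}(\zz,t)$ (carrying the functions $s_j(\zz;t)$) does have a pole at $t=\theta$ by Proposition~\ref{P:Anderson}(ii), but that column is deleted in forming $C_{11}$; the surviving entries $s_j^{(i)}(\zz;t)$ with $1\le i\le r-1$ have their poles only at $t=\theta^{q^{i}},\theta^{q^{i+1}},\dots$, all of absolute value $>\inorm{\theta}$, and $\omega^{(1)}(\theta)=-\tilde{\pi}\neq 0$. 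Alternatively, holomorphy of $\EE_r(\zz,\cdot)$ at $\theta$ is already part of Lemma~\ref{L:E1}. Thus the substance of the theorem is just that this single specialization simultaneously matches, on one side, the false Eisenstein series $E_r$ through its $u$-expansion, and on the other, the quasi-period determinant through Pellarin's formula relating quasi-periodic functions to Anderson generating functions.
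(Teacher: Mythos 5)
Your argument is correct and is essentially the paper's own proof: part (i) is the specialization of Corollary~\ref{C:EC1} at $t=\theta$ compared with \eqref{E:falseEis}, and part (ii) specializes the defining cofactor formula for $\EE_r$ using $\omega^{(1)}(t)_{|t=\theta}=-\tilde{\pi}$ and Proposition~\ref{P:quasiper}. Your added justifications (the functional equation $\omega^{(1)}(t)=(t-\theta)\omega(t)$ giving the residue $-\tilde{\pi}$, and the check via Proposition~\ref{P:Anderson}(iii) that the cofactor $C_{11}$ is holomorphic at $t=\theta$) are details the paper leaves implicit, and they are fine; only note that $z_2,\dots,z_r$ need merely lie in $\zz A$, not generate it, for Proposition~\ref{P:quasiper} to apply.
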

\begin{proof} The first part follows from Corollary \ref{C:EC1} and the definition of $E_r(\zz)$. For the second part, note that by Corollary \ref{C:EC1} and the first part, we have 
	\[
	E_r(\zz)=-\frac{\tilde{\pi}^{q+\dots+q^{r-1}}}{\omega^{(1)}(t)_{|t=\theta}}h_r(\zz)\det\begin{pmatrix}
	s_2^{(1)}(\zz;t)_{|t=\theta}&\dots &s_2^{(r-1)}(\zz;t)_{|t=\theta}\\
	\vdots & & \vdots \\
	s_r^{(1)}(\zz;t)_{|t=\theta}&\dots &s_r^{(r-1)}(\zz;t)_{|t=\theta}
	\end{pmatrix}.
	\]
	Note that $\omega^{(1)}(t)_{|t=\theta}=-\tilde{\pi}$. Now we conclude the proof by Proposition \ref{P:quasiper}.
\end{proof}

The next theorem is useful to detect an equality similar to Theorem \ref{T:EGEK}(i) for the functions $E^{[2]}_r,\dots,E^{[r-1]}_r$ introduced in \eqref{E:tildefunc}. Before we state the next theorem, for any $\gamma=(a_{ij})\in \GL_r(A)$, set $\gamma(t):=(a_{ij}(t))\in \GL_r(\mathbb{F}_q[t])$.
\begin{theorem}\label{T:FE3} Let $\zz=(z_1,\dots,z_r)^{\intercal} \in \Omega^{r}$ and $s_1(\zz;t),\dots,s_r(\zz;t)$ be the Anderson generating functions of the Drinfeld module $\phi^{\zz}$. For any $1\leq i \leq r$, let $C_{i1}$ be the $(i,1)$-cofactor of the matrix $\mathcal{F}(\zz,t)$ defined in \eqref{D:F}. Furthermore set 
	\[
	\EE^{[j]}_r(\zz,t):=-\frac{\tilde{\pi}^{q+\dots+q^{r-1}}}{\omega^{(1)}(t)}h_r(\zz)C_{j1}, \ \  j=2,\dots,r-1.
	\]
Then for any $\gamma\in \GL_r(A)$, we have

\begin{align*}
\EE_{r}(\gamma \cdot \zz,t)&=\det(\gamma)^{-1}j(\gamma,\zz)\bigg(\EE_r(\zz,t)\Big(\bar{c}_{11}-\frac{s_1(\zz;t)}{s_r(\zz;t)}\bar{c}_{1r}\Big)+\EE^{[2]}_r(\zz,t)\Big(\bar{c}_{12}-\frac{s_2(\zz;t)}{s_r(\zz;t)}\bar{c}_{1r}\Big)\\
&\ \ \ \ \  \ \ \ \ \ \ \ \ \ \ \ \ \ \ \ \ \ \ \  +\dots+\EE^{[r-1]}_r(\zz,t)\Big(\bar{c}_{1(r-1)}-\frac{s_{r-1}(\zz;t)}{s_r(\zz;t)}\bar{c}_{1r}\Big)-\frac{\bar{c}_{1r}}{\tilde{\pi}(t-\theta)s_r(\zz;t)}\bigg)
\end{align*}
where for each $1\leq j\leq r$, $\bar{c}_{1j}$ is the $(1,j)$-cofactor of $\gamma(t)$.
\end{theorem}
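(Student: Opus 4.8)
The plan is to mimic the structure of the proof of Theorem \ref{T:FE}, replacing the partial derivative $\partial/\partial z_1$ and the discriminant $\Delta_r$ by the analogue of "logarithmic differentiation" that produces $\mathbf{E}_r$. By Theorem \ref{T:main}, for $t$ in $\mathbb{D}_{\inorm{\theta^{q^2}}}$ (using Lemma \ref{L:E1} for the analytic continuation) we have $\mathbf{E}_r(\zz,t) = \tilde{\pi}^{-1} \sum'_{a_1,\dots,a_r} a_1(t)/(a_1z_1+\dots+a_rz_r)$, and more generally $\mathbf{E}_r^{[j]}(\zz,t) = \tilde\pi^{-1}\sum'_{a_1,\dots,a_r} a_j(t)/(a_1 z_1+\dots+a_r z_r)$ once one checks that the $(j,1)$-cofactor $C_{j1}$ of $\mathcal{F}(\zz,t)$ matches the $j$-th entry of $\mathcal{E}_\zz(1,t)$ under the identity in Theorem \ref{T:main}; this is immediate from the cofactor expansion there. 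So the first step is to record these Eisenstein-series presentations of $\mathbf{E}_r$ and $\mathbf{E}_r^{[j]}$ for $2 \le j \le r-1$, together with a definition of $\mathbf{E}_r^{[r]}$ playing the role of the denominator variable; concretely $s_r(\zz;t)$ will enter because the normalization $z_r = 1$ is what forces the appearance of $1/s_r(\zz;t)$ after the change of variables.

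Second, I would substitute $\zz \mapsto \gamma\cdot\zz$ directly into the series $\sum'_{a_1,\dots,a_r} a_1(t)/(a_1 (\gamma\cdot\zz)_1 + \dots + a_r (\gamma\cdot\zz)_r)$. Writing $\gamma\cdot\zz = (z_1',\dots,z_{r-1}',1)^\intercal$ with $z_i' = (a_{i1}z_1+\dots+a_{ir}z_r)/j(\gamma,\zz)$, the denominator $a_1 z_1' + \dots + a_{r-1} z_{r-1}' + a_r$ equals $j(\gamma,\zz)^{-1}$ times a linear form $\sum_k b_k z_k$ whose coefficient vector is $(a_1,\dots,a_r)\gamma$ after clearing the $j(\gamma,\zz)$; reindexing the lattice sum by $(b_1,\dots,b_r) = (a_1,\dots,a_{r-1},a_r)\gamma$ (a bijection of $A^r$, with $a_i(t) = \sum_k b_k(t)(\gamma^{-1})_{ki}(t)$) converts the sum over $a$'s into a sum over $b$'s. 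This produces a $j(\gamma,\zz)^{k}$ factor of the correct weight together with a $\mathbb{F}_q[t]$-linear combination — with coefficients the entries of $\gamma(t)^{-1}$, i.e. $\det(\gamma(t))^{-1}$ times cofactors $\bar c_{ij}$ of $\gamma(t)$ — of the series $\sum'_b b_j(t)/(\sum_k b_k z_k)$. This is exactly the bookkeeping in \eqref{E:FE1}–\eqref{E:FE5}, now carried out with the $t$-twisted coefficients.

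Third, I would reorganize: the summand $\sum'_b b_r(t)/(\sum_k b_k z_k)$ is, by the same Eisenstein-series identity, proportional to the "$\mathbf{E}_r^{[r]}$" term, and since we need everything in terms of $\mathbf{E}_r, \mathbf{E}_r^{[2]}, \dots, \mathbf{E}_r^{[r-1]}$ plus the extra correction, I would use Theorem \ref{T:main} once more to express $\sum'_b b_r(t)/(\sum_k b_k z_k)$ via the $(r,1)$-cofactor $C_{r1}$ of $\mathcal{F}(\gamma\cdot\zz,t)$ — which by Proposition \ref{P:Anderson}(v) and the lattice change $\zz A = j(\gamma,\zz)^{-1}((\gamma\cdot\zz)A)$ (so $s_i(\gamma\cdot\zz;t) = j(\gamma,\zz) s_{\sigma(i)}(\zz;t)$ for the permutation induced by $\gamma$, up to the $a(t)$-multipliers from Proposition \ref{P:Anderson}(v)) can be rewritten in terms of the $s_i(\zz;t)$. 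The ratio $s_j(\zz;t)/s_r(\zz;t)$ and the term $-\bar c_{1r}/(\tilde\pi(t-\theta)s_r(\zz;t))$ both emerge here: the first from writing $z_j' = (\sum_k a_{jk} z_k)/j(\gamma,\zz)$ in "Anderson-function coordinates" where $z_k \leftrightarrow s_k(\zz;t)$ after dividing out by $s_r(\zz;t)$ (the analogue of normalizing $z_r=1$); the second is the analogue of the $\tilde\pi^{-1} a_{r1}$ correction in \eqref{E:FE4}, where the role of "applying $dL$ to $\Delta_r(\gamma\cdot\zz) = j(\gamma,\zz)^{q^r-1}\Delta_r(\zz)$" is now played by the transformation law of $C_{11}$, and the factor $1/((t-\theta)\omega(t))$ from Theorem \ref{T:main} contributes the $(t-\theta)$ in the denominator.

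The main obstacle I expect is the third step: tracking exactly how the column matrix $\omega_{\gamma\cdot\zz A}$ and hence the cofactors $C_{i1}$ of $\mathcal{F}(\gamma\cdot\zz,t)$ relate to those of $\mathcal{F}(\zz,t)$ under the $\GL_r(A)$-action, since this requires combining the lattice-scaling $s_i(c\lambda;t)$ behavior, the $\mathbb{F}_q[t]$-linearity $s_\phi(az;t) = a(t)s_\phi(z;t)$ of Proposition \ref{P:Anderson}(v), and the fact that $\phi^{\gamma\cdot\zz}$ is isomorphic (not equal) to $\phi^{\zz}$ via the scalar $j(\gamma,\zz)$ — so $\mathcal{F}(\gamma\cdot\zz,t)$ differs from a permuted, $\gamma(t)$-transformed version of $\mathcal{F}(\zz,t)$ by a diagonal twist, and one must verify the $h_r(\zz)$ and $\tilde\pi$-power prefactors absorb the discrepancy so that the cofactors recombine cleanly into the stated $\bar c_{1j}$-linear combination. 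Everything else is a direct, if lengthy, computation parallel to \S4, and I would present it by stating the Eisenstein presentation as a lemma, doing the change-of-variables in the series, and then translating back through Theorem \ref{T:main}.
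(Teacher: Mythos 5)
Your overall route is genuinely different from the paper's and is viable in outline. The paper proves the transformation law at the level of the matrix $\mathcal{F}$ itself, namely $\mathcal{F}(\gamma\cdot\zz,t)=\gamma(t)\,\mathcal{F}(\zz,t)\,\boldsymbol{j}(\gamma,\zz)$ with $\boldsymbol{j}(\gamma,\zz)=\mathrm{diag}\big(j(\gamma,\zz)^{-1},\dots,j(\gamma,\zz)^{-q^{r-1}}\big)$, obtained from Proposition \ref{P:Anderson}(i), the modularity of the coefficient forms $\alpha_j$, and Proposition \ref{P:Anderson}(v); it then passes to the inverse transpose to see how the first-column cofactors transform and uses Proposition \ref{P:det} for the ratio of determinants. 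Your reindexing of the weight-one Eisenstein vector under $(b_1,\dots,b_r)=(m_1,\dots,m_r)\gamma$ reaches the same transformation law more directly (at the cost of first working with $t\in\CC_\infty$, $\inorm{t}\leq 1$, and extending by rigid analyticity via Lemma \ref{L:E1}), and the identification $\EE^{[j]}_r(\zz,t)=-\tilde{\pi}^{-1}\sum' a_j(t)/(a_1z_1+\dots+a_rz_r)$ for all $1\leq j\leq r$ is indeed immediate from Theorem \ref{T:main} together with $\omega^{(1)}(t)=(t-\theta)\omega(t)$; note the sign (Corollary \ref{C:EC1} has a minus, not the plus you wrote), and note $\det(\gamma(t))=\det(\gamma)\in\FF_q^\times$, which is what turns the entries of $\gamma(t)^{-1}$ into $\det(\gamma)^{-1}\bar{c}_{1k}$.

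The gap is in your third step, which is exactly where the terms $s_j(\zz;t)/s_r(\zz;t)$ and $\bar{c}_{1r}/(\tilde{\pi}(t-\theta)s_r(\zz;t))$ must be produced. After reindexing you are left with $\det(\gamma)^{-1}j(\gamma,\zz)\sum_{k=1}^{r}\bar{c}_{1k}\,\EE^{[k]}_r(\zz,t)$, where $\EE^{[r]}_r$ is proportional to the cofactor $C_{r1}$ of $\mathcal{F}(\zz,t)$ --- not of $\mathcal{F}(\gamma\cdot\zz,t)$ as you wrote --- and your claimed rule $s_i(\gamma\cdot\zz;t)=j(\gamma,\zz)s_{\sigma(i)}(\zz;t)$ ``for the permutation induced by $\gamma$'' is false (the correct rule is the linear combination $s_i(\gamma\cdot\zz;t)=j(\gamma,\zz)^{-1}\sum_k a_{ik}(t)s_k(\zz;t)$, which your approach in fact never needs). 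What you do need, and never state, is the identity eliminating $\EE^{[r]}_r$: Laplace expansion along the first column gives $\det\mathcal{F}(\zz,t)=s_1(\zz;t)C_{11}+\dots+s_r(\zz;t)C_{r1}$, and Proposition \ref{P:det} evaluates this determinant as $\tilde{\pi}^{-(q^r-1)/(q-1)}h_r(\zz)^{-1}\omega(t)$; solving for $C_{r1}$, multiplying by $-\tilde{\pi}^{q+\dots+q^{r-1}}h_r(\zz)/\omega^{(1)}(t)$ and using $\omega^{(1)}(t)=(t-\theta)\omega(t)$ yields
\[
\EE^{[r]}_r(\zz,t)=-\frac{1}{\tilde{\pi}(t-\theta)s_r(\zz;t)}-\sum_{i=1}^{r-1}\frac{s_i(\zz;t)}{s_r(\zz;t)}\,\EE^{[i]}_r(\zz,t),\qquad \EE^{[1]}_r:=\EE_r,
\]
which, inserted into the reindexed sum, gives precisely the stated functional equation. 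This is the same closing step as in the paper's proof; your heuristic appeal to ``Anderson-function coordinates'' and to an analogue of \eqref{E:FE4} does not substitute for it. With this lemma supplied (and the sign fixed), your argument is complete.
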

\begin{proof}
    The proof uses similar ideas as in \cite[Lem. 2.4]{Pel14}. We have
    \begin{align*}
        s_i(\gamma\cdot\zz;t)&=\sum_{j\geq 0}\frac{\alpha_j(\gamma\cdot\zz)({a_{i1}z_1+\cdots+a_{ir}z_r})^{q^j}}{(\theta^{q^j}-t)j(\gamma,\zz)^{q^j}}\\
        &=\sum_{j\geq 0}j(\gamma,\zz)^{q^j-1}\frac{\alpha_j(\zz)({a_{i1}z_1+\cdots+a_{ir}z_r})^{q^j}}{(\theta^{q^j}-t)j(\gamma,\zz)^{q^j}}.\\
        &=j(\gamma,\zz)^{-1}\left(a_{i1}(t)s_1(\zz;t)+\cdots+a_{ir}(t)s_r(\zz;t)\right)
    \end{align*}
    where the first equality follows from Proposition \ref{P:Anderson}(i),  the second equality from Example \ref{Ex:1}(i) and the last equality from Proposition \ref{P:Anderson}(v). Thus, we immediately derive that (cf. \cite[Lem.~2.4]{Pel14})
$
        \mathcal{F}(\gamma\cdot\zz,t)=\gamma(t)\mathcal{F}(\zz,t)\boldsymbol{j}(\gamma,\zz)
$
    where 
    \begin{align*}
        \boldsymbol{j}(\gamma,\zz)=\begin{pmatrix}
        j(\gamma,\zz)^{-1} &  &  & \\
         & j(\gamma,\zz)^{-q} &  & \\
         & & \ddots & \\
         & & & j(\gamma,\zz)^{-q^{r-1}}
        \end{pmatrix}\in\GL_r(\mathbb{C}_\infty).
    \end{align*}
    As a consequence, we get
    \begin{equation}\label{Eq:Deformation_of_Vectorial_Forms}
        (\mathcal{F}^{-1}(\gamma\cdot\zz,t))^{\intercal}=(\gamma^{-1}(t))^{\intercal}(\mathcal{F}^{-1}(\zz,t))^{\intercal}(\boldsymbol{j}^{-1}(\gamma,\zz))^{\intercal}.
    \end{equation}
    Note that by Example \ref{Ex:1}(iii) and Proposition~\ref{P:det} we have
    \begin{equation}\label{Eq:Ratio_of_Det}
        \frac{\det(\mathcal{F}(\gamma\cdot\zz,t))}{\det(\mathcal{F}(\zz,t))}=\frac{h_r(\zz)}{h_r(\gamma\cdot\zz)}=j(\gamma,\zz)^{-(1+q+\cdots+q^{r-1})}\det(\gamma).
    \end{equation}
    Let $C_{i1}^\gamma$ be the $(i,1)$-cofactor of the matrix $\mathcal{F}(\gamma\cdot\zz,t)$ for each $1\leq i\leq r$. By comparing the first column of both sides of (\ref{Eq:Deformation_of_Vectorial_Forms}), we obtain
    \begin{align*}
        \det(\mathcal{F}(\gamma\cdot\zz,t))^{-1}
        \begin{pmatrix}
        C_{11}^\gamma \\
        \vdots\\
        C_{r1}^\gamma
        \end{pmatrix}=
        (\gamma^{-1}(t))^{\intercal}
        \det(\mathcal{F}(\zz,t))^{-1}
        \begin{pmatrix}
        C_{11} \\
        \vdots\\
        C_{r1}
        \end{pmatrix}j(\gamma,\zz).
    \end{align*}
    Then (\ref{Eq:Ratio_of_Det}) implies that
    \begin{align*}
        \begin{pmatrix}
        C_{11}^\gamma \\
        \vdots\\
        C_{r1}^\gamma
        \end{pmatrix}=
        j(\gamma,\zz)^{-(q+\cdots+q^{r-1})}\det(\gamma)(\gamma^{-1}(t))^{\intercal}
        \begin{pmatrix}
        C_{11} \\
        \vdots\\
        C_{r1}
        \end{pmatrix}.
    \end{align*}
    On the other hand, we may express
$
        \det(\mathcal{F}(\zz,t))=s_1(\zz;t)C_{11}+\cdots+s_r(\zz;t)C_{r1}.
$
    Then we conclude from Proposition~\ref{P:det} that
    \begin{align*}
        C_{r1}=\frac{1}{s_r(\zz;t)}\left(\Tilde{\pi}^{-\frac{q^r-1}{q-1}}h_r(\zz)^{-1}\omega(t)-s_1(\zz;t)C_{11}-\cdots-s_{r-1}(\zz;t)C_{(r-1)1}\right).
    \end{align*}
    Finally, we get
    \begin{align*}
        \EE_r(\gamma\cdot\zz,t)&=\frac{-\Tilde{\pi}^{q+\cdots+q^{r-1}}}{\omega(t)}h_r(\gamma\cdot\zz)C_{11}^\gamma\\
        &=\frac{-\Tilde{\pi}^{q+\cdots+q^{r-1}}}{\omega(t)}j(\gamma,\zz)\det(\gamma)^{-1}h_r(\zz)\left(\bar{c}_{11}C_{11}+\cdots+\bar{c}_{1r}C_{r1}\right)\\
        &=\det(\gamma)^{-1}j(\gamma,\zz)\Big(\EE_r(\zz,t)\big(\bar{c}_{11}-\frac{s_1(\zz;t)}{s_r(\zz;t)}\bar{c}_{1r}\big)+\EE^{[2]}_r(\zz,t)\big(\bar{c}_{12}-\frac{s_2(\zz;t)}{s_r(\zz;t)}\bar{c}_{1r}\big)\\
        &\ \ \ \ \ \ \ \  \ \ \ \ \ \ \ \ \ \ \ \ \ +\dots+\EE^{[r-1]}_r(\zz,t)\big(\bar{c}_{1(r-1)}-\frac{s_{r-1}(\zz;t)}{s_r(\zz;t)}\bar{c}_{1r}\big)-\frac{\bar{c}_{1r}}{\tilde{\pi}(t-\theta)s_r(\zz;t)}\Big).
    \end{align*}
    
\end{proof}
\begin{remark} If we let $\gamma=\begin{pmatrix}
	a&b \\ c&d
	\end{pmatrix}\in \GL_2(A)$, then we have $\bar{c}_{11}=d(t)$ and $\bar{c}_{12}=-c(t)$ and therefore for $\zz=(z,1)^{\intercal}\in \Omega^2$, we have 
	\[
	\EE_2(\gamma\cdot \zz,t)=\det(\gamma)^{-1}(cz+d)\Big(\EE_2(\zz,t)\Big(c(t)\frac{s_1(\zz;t)}{s_2(\zz;t)}+d(t)\Big)+\frac{c(t)}{\tilde{\pi}(t-\theta)s_2(\zz;t)}\Big)
	\] 
	which is already obtained by Pellarin in \cite[Prop. 3.2]{Pel14}.
\end{remark}
The following corollary is a consequence of a comparison between the functional equations in Theorem \ref{T:FE} and Theorem \ref{T:FE3}.

\begin{corollary} Let $\zz$ be an element in $\Omega^r$. For any $2\leq j \leq r-1$, we have
	\[
	\EE^{[j]}_r(\zz,\theta)=E^{[j]}_r(\zz).
	\]
\end{corollary}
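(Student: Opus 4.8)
The plan is to derive the identity by playing the functional equation of Theorem~\ref{T:FE3} against that of Theorem~\ref{T:FE}, specialized at the same matrix $\gamma_j\in\GL_r(A)$ used in the proof of the corollary following Theorem~\ref{T:FE} (the one with $\gamma_j^{-1}$ as in \eqref{E:matrix}, i.e.\ the identity modified only in the $(j,1)$ slot), and then evaluating at $t=\theta$.

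First I would record what Theorem~\ref{T:FE3} gives for $\gamma=\gamma_j$. Since every entry of $\gamma_j$ lies in $\FF_q\subset A$, we have $\gamma_j(t)=\gamma_j$, so the cofactors $\bar c_{1k}$ of $\gamma_j(t)$ coincide with the cofactors $c_{1k}$ of $\gamma_j$ already implicit in the proof of \eqref{E:first}; the direct cofactor computation there gives $c_{11}=c_{1j}=1$, $c_{1k}=0$ for $k\in\{2,\dots,r-1\}\setminus\{j\}$, $c_{1r}=0$, $\det(\gamma_j)=1$, and — because $2\le j\le r-1$ forces the bottom row of $\gamma_j$ to be $e_r$ — also $j(\gamma_j,\zz)=1$. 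The key point is that $\bar c_{1r}=0$ kills the term $\bar c_{1r}/(\tilde\pi(t-\theta)s_r(\zz;t))$, which is the only one in Theorem~\ref{T:FE3} that could be singular at $t=\theta$. Thus Theorem~\ref{T:FE3} collapses to the clean identity $\EE_r(\gamma_j\cdot\zz,t)=\EE_r(\zz,t)+\EE^{[j]}_r(\zz,t)$, valid as an identity of functions of $t$ on a neighborhood of $t=\theta$ (note $\gamma_j\cdot\zz\in\Omega^r$, so $\EE_r(\gamma_j\cdot\zz,t)$ is defined there by Lemma~\ref{L:E1}).

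Next I would specialize this at $t=\theta$. One should first observe that $\EE^{[j]}_r(\zz,\theta)$ is genuinely defined: the Anderson generating functions $s_i^{(k)}(\zz;t)$ with $1\le k\le r-1$ entering the cofactor $C_{j1}$ are regular at $t=\theta$ by Proposition~\ref{P:Anderson}(iii), and $\omega^{(1)}$ is regular and nonvanishing there (indeed $\omega^{(1)}(\theta)=-\tilde\pi$, as in the proof of Theorem~\ref{T:EGEK}). By Theorem~\ref{T:EGEK}(i), $\EE_r(\cdot,\theta)=E_r(\cdot)$ on all of $\Omega^r$, so the identity becomes $E_r(\gamma_j\cdot\zz)=E_r(\zz)+\EE^{[j]}_r(\zz,\theta)$. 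Comparing with \eqref{E:first}, which reads $E_r(\gamma_j\cdot\zz)=E_r(\zz)+E^{[j]}_r(\zz)$, and cancelling $E_r(\zz)$ gives $\EE^{[j]}_r(\zz,\theta)=E^{[j]}_r(\zz)$, as desired.

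The only genuinely delicate step is the cofactor bookkeeping for $\gamma_j(t)$, and within it the vanishing $\bar c_{1r}=0$: this is precisely what removes the apparent pole at $t=\theta$ in Theorem~\ref{T:FE3} and legitimizes the evaluation. Everything else is a direct substitution, requiring no analytic input beyond Theorems~\ref{T:EGEK}(i), \ref{T:FE}, and \ref{T:FE3}.
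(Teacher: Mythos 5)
Your proposal is correct and follows essentially the same route as the paper: apply Theorem~\ref{T:FE3} with the matrix $\gamma_j$ of \eqref{E:matrix} to get $\EE_r(\gamma_j\cdot\zz,t)=\EE_r(\zz,t)+\EE^{[j]}_r(\zz,t)$, evaluate at $t=\theta$ using Theorem~\ref{T:EGEK}(i) and Proposition~\ref{P:Anderson}(iii), and compare with \eqref{E:first}. Your extra cofactor bookkeeping (in particular $\bar c_{1r}=0$ removing the term with the factor $(t-\theta)^{-1}$) just makes explicit what the paper leaves implicit.
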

\begin{proof}By the definition of $\EE^{[j]}_r(\zz,t)$ and  Proposition \ref{P:Anderson}(iii), one can see that  $\EE^{[j]}_r(\zz,t)$ is well-defined at $t=\theta$. Let $\gamma_j\in \GL_r(A)$ be defined as in \eqref{E:matrix}. By Theorem \ref{T:FE3}, we have
\begin{equation}\label{E:second}
\EE_{r}(\gamma_j \cdot \zz,t)=j(\gamma_j,\zz)(\EE_r(\zz,t)+\EE^{[j]}_r(\zz,t))=\EE_r(\zz,t)+\EE^{[j]}_r(\zz,t).
\end{equation}
Since, by Theorem \ref{T:EGEK}(i), $\EE_r(\zz,\theta)=E_r(\zz)$, combining \eqref{E:first} and \eqref{E:second} implies the desired equality. 
\end{proof}

\section{Transcendence of the special values of $E_r$}
    In this section, we analyze transcendental properties of the function $E_r$ at \textit{CM points} which we will define as follows. 
    \begin{definition} We say that an element $\mathbf{z}\in\Omega^r$ is \textit{a CM point} if the Drinfeld module $\phi^{\zz}$ corresponding to the $A$-lattice $\mathbf{z}A$ is a CM Drinfeld module.
    \end{definition}
    We briefly explain a result, due to Chang and Papanikolas \cite{CP12}, about the algebraic independence of periods and quasi-periods of Drinfeld modules. Let $\phi$ be a CM Drinfeld module defined over $\overline{K}$ and $P$ be its period matrix as in \eqref{E:period}. We set $\overline{K}(P)$ to be the field generated by the entries of $P$ over $\overline{K}$. Then the transcendence degree $\trdeg_{\overline{K}}\overline{K}(P)$ of the extension $\overline{K}(P)$ over $\overline{K}$ is $r$ 
    \cite[Thm. 1.2.2]{CP12} and by \cite[(3.12), (3.13)]{BP02}, we know that the set $\mathcal{S}:=\{w_r,F^{\phi}_{\delta_{1}}(w_r),\dots,F^{\phi}_{\delta_{r-1}}(w_r)\}$ generates $\overline{K}(P)$ over $\overline{K}$. Combining these two results, we deduce that $\mathcal{S}$ forms a transcendence basis for $\overline{K}(P)$ over $\overline{K}$.
    
    Before stating our next proposition, for any $1\leq i,j \leq r$, we let $Q_{ij}$ be the $(i,j)$-cofactor of $P$.
    
    \begin{proposition}\label{Prop:Transcendence_of_Special_Values} 
	 Let $\phi$ be a CM Drinfeld module of rank $r$ defined over $\overline{K}$ given as in \eqref{intro0} whose $r$-th coefficient is $(-1)^{r-1}$ and $P$ be its period matrix. If $Q_{11}\neq 0$, then $\tilde{\pi}^{-2}w_rQ_{11}$ is transcendental over $\overline{K}$.
    \end{proposition}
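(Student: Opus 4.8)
\emph{Plan.} The idea is to realize $\tilde{\pi}^{-2}w_rQ_{11}$ as a nonzero \emph{homogeneous} element of strictly negative weight inside a purely transcendental extension of $\overline{K}$, and to conclude by a weight count. By \cite[Thm.~1.2.2]{CP12} and \cite[(3.12),(3.13)]{BP02}, recalled above, the set $\mathcal{S}=\{w_r,F^{\phi}_{\delta_1}(w_r),\dots,F^{\phi}_{\delta_{r-1}}(w_r)\}$ is a transcendence basis of $\overline{K}(P)$ over $\overline{K}$; writing $x_0:=w_r$ and $x_j:=F^{\phi}_{\delta_j}(w_r)$ for $1\leq j\leq r-1$, these $r$ elements are algebraically independent over $\overline{K}$ and $\overline{K}(P)=\overline{K}(x_0,\dots,x_{r-1})$. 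I then want the finer statement that every entry of $P$ is an $\overline{K}$-linear combination of $x_0,\dots,x_{r-1}$. Here the CM hypothesis enters: $L:=\End(\phi)\otimes_AK$ is a degree-$r$ field and $(\Ker\exp_{\phi})\otimes_AK$ is a one-dimensional $L$-space, so $w_i=\nu_iw_r$ with $\nu_i\in L$; since $\phi$ is defined over $\overline{K}$, so is each endomorphism (via its constant term), whence $\nu_i\in L\subseteq\overline{K}$. Multiplication by $\nu_i$ on periods corresponds (after clearing denominators by an element of $A$) to the $\overline{K}$-rational action of the corresponding endomorphism on $H_{DR}^{*}(\phi)$, and a strictly inner biderivation contributes nothing to a quasi-period evaluated at a period; together with $F^{\phi}_{\delta_0}(w_i)=w_i-\exp_{\phi}(w_i)=\nu_ix_0$ this forces every $F^{\phi}_{\delta_j}(w_i)$ to be an $\overline{K}$-linear combination of $x_0,\dots,x_{r-1}$ — exactly \cite[(3.12),(3.13)]{BP02} made explicit. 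Identifying $\overline{K}(P)$ with the rational function field $\overline{K}(X_0,\dots,X_{r-1})$ graded by $\deg X_i=1$, every entry of $P$ is then homogeneous of degree $1$.

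Next I would establish the Legendre-type relation $\det P=\zeta_0\tilde{\pi}$ with $\zeta_0\in\FF_q^{\times}$. Form $\mathcal{F}_{\phi}:=\bigl(s_{\phi}^{(j)}(w_i;t)\bigr)$ with rows $i=1,\dots,r$ and columns $j=0,\dots,r-1$. Since $\phi_{\theta,r}=(-1)^{r-1}$, Proposition~\ref{P:Anderson}(iv) gives, exactly as in the derivation of \eqref{E:det2}, a companion-type identity $\tau(\mathcal{F}_{\phi}^{\intercal})=\Phi\,\mathcal{F}_{\phi}^{\intercal}$ with $\det\Phi=t-\theta$, hence $(\det\mathcal{F}_{\phi})^{(1)}=(t-\theta)\det\mathcal{F}_{\phi}$, the functional equation of $\omega$. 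Arguing as in the proof of Proposition~\ref{P:det} (the auxiliary rank-one module being now the Carlitz module itself), \cite[Prop.~6.2(b)]{EGP14} and the invertibility of $\mathcal{F}_{\phi}$ in $\Mat_r(\TT)$ force $\det\mathcal{F}_{\phi}=\zeta_0\omega$ with $\zeta_0\in\FF_q^{\times}$. Taking residues at $t=\theta$: the first column of $\mathcal{F}_{\phi}$ is the only one with a pole there, with residue $-w_i$ in row $i$ by Proposition~\ref{P:Anderson}(ii), while Proposition~\ref{P:quasiper} identifies the other columns at $t=\theta$ with the quasi-period columns of $P$; expanding the determinant along the first column gives $\Res_{t=\theta}\det\mathcal{F}_{\phi}=-\det P$, whereas $\Res_{t=\theta}\omega=-\tilde{\pi}$. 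Hence $\det P=\zeta_0\tilde{\pi}$, so $\tilde{\pi}=\zeta_0^{-1}\det P\in\overline{K}(P)$ is homogeneous of degree $r$ in the grading above.

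I would then conclude with the weight count. The cofactor $Q_{11}$ is the determinant of an $(r-1)\times(r-1)$ submatrix of $P$ with degree-$1$ entries, hence homogeneous of degree $r-1$, and $w_r=X_0$ has degree $1$; therefore $\tilde{\pi}^{-2}w_rQ_{11}$ is homogeneous of degree $-2r+1+(r-1)=-r$. It is nonzero, since $\tilde{\pi}\neq0$, $w_r\neq0$ (a nonzero period), and $Q_{11}\neq0$ by hypothesis. A nonzero homogeneous element of degree $-r\neq0$ (recall $r\geq2$) cannot lie in the degree-$0$ subfield of the grading, which contains $\overline{K}$; thus $\tilde{\pi}^{-2}w_rQ_{11}\notin\overline{K}$, and as $\overline{K}$ is algebraically closed this is exactly transcendence over $\overline{K}$.

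The main obstacle is the structural claim in the first paragraph — that the entries of $P$ are honestly $\overline{K}$-\emph{linear} (not merely rational) forms in the transcendence basis $\mathcal{S}$; this rests on combining the $\overline{K}$-rationality of the endomorphism action on the de Rham module with $(\Ker\exp_{\phi})\otimes_AK$ being a rank-one $L$-module, so some care is needed in invoking the de Rham theory and \cite{BP02}. With that in hand the Legendre relation is routine (it is essentially Proposition~\ref{P:det} read off at $t=\theta$) and the remainder is the weight count. As a consistency check, the reduction used to deduce Theorem~\ref{T:2} will match $\tilde{\pi}^{-2}w_rQ_{11}$ with an algebraic multiple of $E_r(\mathbf{z})$ at the CM point $\mathbf{z}$ with $\phi\cong\phi^{\mathbf{z}}$.
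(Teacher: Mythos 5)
Your proof is correct and uses the same three external inputs as the paper — the $\overline{K}$-linearity of the entries of $P$ in $\mathcal{S}$ from \cite[(3.12),(3.13)]{BP02}, the transcendence degree $r$ from \cite[Thm.~1.2.2]{CP12}, and the Legendre-type relation $\det P\doteq\tilde{\pi}$ — but your endgame is organized differently from the paper's. The paper does not identify $\overline{K}(P)$ with a graded rational function field; instead it introduces the normalized set $\mathfrak{S}=\{w_r/\tilde{\pi},F^{\phi}_{\delta_1}(w_r)/\tilde{\pi},\dots\}$, shows via an auxiliary polynomial that $\tilde{\pi}$ is algebraic over $\overline{K}(\mathfrak{S})$ (so $\mathfrak{S}$ is algebraically independent), then shows $\tilde{\pi}^{-2}w_rQ_{11}$ is algebraic over $\overline{K}(\mathfrak{S})$, and finally exhibits an algebraic relation for $\mathfrak{S}$ over $\overline{K}(\tilde{\pi}^{-2}w_rQ_{11})$ built from the degree mismatch between $\mathbf{D}^{r-2}$ and $(X_0\mathbf{Q}_{11})^{r-1}$, reaching a contradiction by counting transcendence degrees. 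Your direct weight count (the element is a nonzero homogeneous rational function of degree $-r\neq 0$ in $X_0,\dots,X_{r-1}$, hence not a constant, and $\overline{K}$ is algebraically closed in $\CC_{\infty}$) exploits exactly the same homogeneity mismatch but skips the three claims, which makes the argument shorter and more transparent; what the paper's bookkeeping buys is that it never needs to phrase things in terms of a grading on the purely transcendental field. Two small remarks: your re-derivation of the Legendre relation via the matrix of Anderson generating functions is sound and is in fact more self-contained than the paper, which simply invokes its earlier residue computation \eqref{E:det4} together with the normalization $\phi_{\theta,r}=(-1)^{r-1}$ (and note that even the weaker conclusion $\det P=\xi(\theta)\tilde{\pi}$ with $\xi(\theta)\in K^{\times}$ would suffice for your weight count); and your sketched structural justification of the linearity of the entries of $P$ in $\mathcal{S}$ is superfluous since, like the paper, you may simply cite \cite[(3.12),(3.13)]{BP02}.
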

    
    \begin{proof} For each $0\leq i \leq r-1$, recall the definition of $F^{\phi}_{\delta_{j}}$ from Section 2.2 and note that  $F^{\phi}_{\delta_0}(w_i)=w_i$ for each $1\leq i\leq r$. Then by \cite[(3.12), (3.13)]{BP02} we know that 
    $F^{\phi}_{\delta_{j}}(w_i)=\mathcal{L}_{ij}(w_r,F^{\phi}_{\delta_{1}}(w_r),\dots,F^{\phi}_{\delta_{r-1}}(w_r))$
    	where $\mathcal{L}_{ij}(X_0,\dots,X_{r-1})\in\overline{K}[X_0,\dots,X_{r-1}]$ is a homogeneous polynomial of total degree $1$. In particular, $\mathcal{L}_{rj}=X_j$ for $0\leq j\leq r-1$. Now we set
    	$$\bold{Q}_{11}(X_0,\dots,X_{r-1}):=\det\begin{pmatrix}
    	\mathcal{L}_{21}&\dots &\mathcal{L}_{2(r-1)}\\
    	\vdots & & \vdots \\
    	\mathcal{L}_{r1}&\dots &\mathcal{L}_{r(r-1)}
    	\end{pmatrix}\in\overline{K}[X_0,\dots,X_{r-1}]$$
    	which is not identically zero as we assume that $Q_{11}\neq 0$ and $\bold{Q}_{11}(w_r,F^{\phi}_{\delta_{1}}(w_r),\dots,F^{\phi}_{\delta_{r-1}}(w_r))=Q_{11}$. Note that it is also a homogeneous polynomial of degree $r-1$. Similarly, we set
    	$$\bold{D}(X_0,\dots,X_{r-1}):=\det\begin{pmatrix}
    	\mathcal{L}_{10}&\dots &\mathcal{L}_{1(r-1)}\\
    	\vdots & & \vdots \\
    	\mathcal{L}_{r0}&\dots &\mathcal{L}_{r(r-1)}
    	\end{pmatrix}\in\overline{K}[X_0,\dots,X_{r-1}]$$
    	which is again not identically zero as we know by Proposition \ref{P:Anderson}(ii), \eqref{E:det4} and our assumption that \[
    	\bold{D}(w_r,F^{\phi}_{\delta_{1}}(w_r),\dots,F^{\phi}_{\delta_{r-1}}(w_r))=\tilde{\pi}\neq 0.
    	\]
    	 Furthermore $\bold{D}(X_0,\dots,X_{r-1})$ is a homogeneous polynomial of degree $r$. 
    	We prove the proposition by dividing our argument into several claims.
    	
    	\textbf{Claim 1: }  The set $\mathfrak{S}:=\big\{\frac{w_r}{\tilde{\pi}},\frac{F^{\phi}_{\delta_{1}}(w_r)}{\tilde{\pi}},\dots,\frac{F^{\phi}_{\delta_{r-1}}(w_r)}{\tilde{\pi}}\big\}$ is algebraically independent over $\overline{K}$.
    	\begin{proof}[{Proof of Claim 1}] Since 
    	\[
    	\overline{K}(w_r,F^{\phi}_{\delta_{1}}(w_r),\dots,F^{\phi}_{\delta_{r-1}}(w_r))=\overline{K}\Big(\frac{w_r}{\tilde{\pi}},\frac{F^{\phi}_{\delta_{1}}(w_r)}{\tilde{\pi}},\dots,\frac{F^{\phi}_{\delta_{r-1}}(w_r)}{\tilde{\pi}},\tilde{\pi}\Big)
    	\]
    	and $\trdeg_{\overline{K}}\overline{K}(P)=|\mathfrak{S}|=r$ by  \cite[Thm. 1.2.2]{CP12}, it suffices to show that $\tilde{\pi}$ is an algebraic element over the field 
    	$L:=\overline{K}\big(w_r/\tilde{\pi},F^{\phi}_{\delta_{1}}(w_r)/\tilde{\pi},\dots,F^{\phi}_{\delta_{r-1}}(w_r)/\tilde{\pi}\big).$
    	Let 
    	$$f_1(Y):=\bold{D}\Big(\frac{w_r}{\tilde{\pi}},\frac{F^{\phi}_{\delta_{1}}(w_r)}{\tilde{\pi}},\dots,\frac{F^{\phi}_{\delta_{r-1}}(w_r)}{\tilde{\pi}}\Big)Y^{r-1}-1=\left(\frac{Y}{\tilde{\pi}}\right)^{r-1}-1\in L[Y].$$ Then $f_1(Y)$ is a non-zero polynomial with coefficients in $L$ such that $f_1(\tilde{\pi})=0$, which implies that $\tilde{\pi}$ is algebraic over $L$.
    	\end{proof}
    	\textbf{Claim 2: } The element $\tilde{\pi}^{-2}w_rQ_{11}$ is algebraic over $L$. In particular, $$\trdeg_{\overline{K}}L=\trdeg_{\overline{K}}L(\tilde{\pi}^{-2}w_rQ_{11})=r.$$
    	
    	\begin{proof}[{Proof of Claim 2}] Let 
    	\begin{align*}
    	f_2(Y)&:=\bold{D}\Big(\frac{w_r}{\tilde{\pi}},\frac{F^{\phi}_{\delta_{1}}(w_r)}{\tilde{\pi}},\dots,\frac{F^{\phi}_{\delta_{r-1}}(w_r)}{\tilde{\pi}}\Big)^{r-2}Y^{r-1}\\
    	&\ \ \  \ \ \ \ \ \ \  \ \ \ \ \ \ \  \ \ \ \  -\left(\frac{w_r}{\tilde{\pi}}\bold{Q}_{11}\Big(\frac{w_r}{\tilde{\pi}},\frac{F^{\phi}_{\delta_{1}}(w_r)}{\tilde{\pi}},\dots,\frac{F^{\phi}_{\delta_{r-1}}(w_r)}{\tilde{\pi}}\Big)\right)^{r-1}\\
    	&=\left(\frac{Y}{\tilde{\pi}^{r-2}}\right)^{r-1}-\left(\frac{w_rQ_{11}}{\tilde{\pi}^r}\right)^{r-1}\in L[Y].
    	\end{align*}
    	Then $f_2(Y)$ is a non-zero polynomial with coefficients in $L$ such that $f_2(\tilde{\pi}^{-2}w_rQ_{11})=0$, which implies that $\tilde{\pi}^{-2}w_rQ_{11}$ is algebraic over $L$ as desired. The last assertion now follows from Claim 1. 
    	\end{proof}

    	\textbf{Claim 3: } The set $\mathfrak{S}$ is algebraically dependent over $\overline{K}(\tilde{\pi}^{-2}w_rQ_{11})$.
    	\begin{proof}[{Proof of Claim 3}]
    	
    	 Let
    	\begin{align*}
    	f_3(X_0,\dots,X_{r-1})&:=\bold{D}(X_0,X_1,\dots,X_{r-1})^{r-2}(\tilde{\pi}^{-2}w_rQ_{11})^{r-1}\\
    	&\ \ \ \ \ -\left(X_0\bold{Q}_{11}(X_0,X_1,\dots,X_{r-1})\right)^{r-1}\in\overline{K}(\tilde{\pi}^{-2}w_rQ_{11})[X_0,\dots,X_{r-1}].
    	\end{align*}
    	Since $\tilde{\pi}^{-2}w_rQ_{11}\neq 0$, $\bold{D}(X_0,X_1,\dots,X_{r-1})^{r-2}(\tilde{\pi}^{-2}w_rQ_{11})^{r-1}$ is a non-zero homogeneous polynomial of degree $r(r-2)$. On the other hand, $\left(X_0\bold{Q}_{11}(X_0,X_1,\dots,X_{r-1})\right)^{r-1}$ is a non-zero homogeneous polynomial of degree $r(r-1)$. Therefore $f_3$ is not identically zero as it is formed by two non-zero homogeneous polynomials of different total degree. Finally, since 
    	\[
    	f_3\Big(\frac{w_r}{\tilde{\pi}},\frac{F^{\phi}_{\delta_{1}}(w_r)}{\tilde{\pi}},\dots,\frac{F^{\phi}_{\delta_{r-1}}(w_r)}{\tilde{\pi}}\Big)=f_2(\tilde{\pi}^{-2}w_rQ_{11})=0,\]
    	 we conclude that $\mathfrak{S}$ is an algebraically dependent set over $\overline{K}(\tilde{\pi}^{-2}w_rQ_{11})$. 
    	\end{proof}
    	
    	 Now we prove the proposition. Assume    that $\tilde{\pi}^{-2}w_rQ_{11}$ is algebraic over $\overline{K}$. Then we have
    	\begin{align*}
    	r&=\trdeg_{\overline{K}}L(\tilde{\pi}^{-2}w_rQ_{11})\\
    	&=\trdeg_{\overline{K}}\overline{K}(\tilde{\pi}^{-2}w_rQ_{11})+\trdeg_{\overline{K}(\tilde{\pi}^{-2}w_rQ_{11})}L(\tilde{\pi}^{-2}w_rQ_{11})\\
  		&=\trdeg_{\overline{K}(\tilde{\pi}^{-2}w_rQ_{11})}L(\tilde{\pi}^{-2}w_rQ_{11})\\
  		&<r
    	\end{align*}
    	where the first equality follows from Claim 2, the second one from properties of the transcendental degree of field extensions, the third one from the assumption and the inequality from Claim 3. But it is a contradiction and hence we finish the proof of the proposition. 
    \end{proof}

     For $1\leq \ell\leq r-1$, we set $$J_\ell(\mathbf{z}):=\frac{g_\ell(\mathbf{z})^{(q^r-1)/(q^{\gcd(\ell,r)}-1)}}{\Delta_{r}(\zz)^{(q^\ell-1)/(q^{\gcd(\ell,r)}-1)}}$$
    which is a well-defined function on $\Omega^r$ as by construction $\Delta_{r}(\zz)$ is non-zero. It is known by \cite{Gek83} and \cite[Prop. 6.2]{Ham03} that if $\mathbf{z}$ is a CM point, then $J_\ell(\mathbf{z})\in\overline{K}$ for all $1\leq\ell\leq r$. 
    
    The main result of this section can be stated as follows. 
    	
    \begin{theorem}\label{T:trans}
    	Let $\mathbf{z}\in\Omega^r$ be a CM point. If $E_r(\mathbf{z})\neq 0$, then it is transcendental over $\overline{K}$.
    \end{theorem}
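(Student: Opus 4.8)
The plan is to descend $\phi^{\zz}$ to a CM Drinfeld module defined over $\overline{K}$ with $r$-th coefficient $(-1)^{r-1}$, identify $E_r(\zz)$ with a nonzero $\overline{K}^{\times}$-multiple of the cofactor quantity $\tilde{\pi}^{-2}w_rQ_{11}$ attached to that module via Theorem~\ref{T:EGEK}(ii), and then invoke Proposition~\ref{Prop:Transcendence_of_Special_Values}.

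First I would rescale. Set $c:=\big((-1)^{r-1}\Delta_r(\zz)\big)^{1/(q^r-1)}\in\CC_{\infty}^{\times}$ and let $\psi:=\phi^{c\zz}$ be the Drinfeld module attached to the lattice $c\zz A$ with $A$-basis $cz_1,\dots,cz_r$. Multiplication by $c$ gives an isomorphism $\phi^{\zz}\iso\psi$, so $\psi$ is CM because $\zz$ is a CM point, and its $r$-th coefficient is $\Delta_r(\zz)c^{1-q^r}=(-1)^{r-1}$. Its $\ell$-th coefficient equals $g_\ell(\zz)c^{1-q^\ell}$; raising this to the power $(q^r-1)/(q^{\gcd(\ell,r)}-1)$ and substituting $c^{q^r-1}=(-1)^{r-1}\Delta_r(\zz)$ turns it into $\pm J_\ell(\zz)$, which lies in $\overline{K}$ for CM points. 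Hence every coefficient of $\psi$ is algebraic over $\overline{K}$, so $\psi$ is defined over $\overline{K}$ and satisfies the hypotheses of Proposition~\ref{Prop:Transcendence_of_Special_Values}.

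Next I would transport the quasi-periodic functions along this isomorphism. Pulling back the biderivation $\delta_i\colon\theta\mapsto\tau^i$ of $\psi$ by multiplication-by-$c$ produces the biderivation $c^{q^i}\delta_i$ of $\phi^{\zz}$ (because $\tau^ic=c^{q^i}\tau^i$), and uniqueness in the functional equation defining quasi-periodic functions then yields $F^{\psi}_{\delta_i}(cz_j)=c^{q^i}F^{\phi^{\zz}}_{\delta_i}(z_j)$ for all $i,j$. Writing $P$ for the period matrix of $\psi$ with generators $w_j=cz_j$ (so $w_r=c$), its $(1,1)$-cofactor becomes $Q_{11}=c^{\,q+q^2+\dots+q^{r-1}}\det\!\big(F^{\phi^{\zz}}_{\delta_i}(z_j)\big)_{2\le j\le r,\;1\le i\le r-1}$, the determinant here being exactly the one appearing in Theorem~\ref{T:EGEK}(ii). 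Using that theorem together with the identity $c^{(q^r-1)/(q-1)}=\zeta\,\tilde{\pi}^{(q^r-1)/(q-1)}h_r(\zz)$ for some $\zeta\in\FF_q^{\times}$ (which follows by comparing $(q-1)$-st powers via \eqref{E:det6}) and the exponent identity $-1+q+\dots+q^{r-1}=(q^r-1)/(q-1)-2$, one obtains
\[
\tilde{\pi}^{-2}w_rQ_{11}=\tilde{\pi}^{-2}\,c^{\,1+q+\dots+q^{r-1}}\det\!\big(F^{\phi^{\zz}}_{\delta_i}(z_j)\big)_{2\le j\le r,\;1\le i\le r-1}=\zeta\,E_r(\zz).
\]

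Finally, if $E_r(\zz)\neq 0$ then the displayed identity forces $Q_{11}\neq 0$, so Proposition~\ref{Prop:Transcendence_of_Special_Values} applied to $\psi$ shows that $\tilde{\pi}^{-2}w_rQ_{11}$ is transcendental over $\overline{K}$; since $E_r(\zz)=\zeta^{-1}\tilde{\pi}^{-2}w_rQ_{11}$ with $\zeta^{-1}\in\overline{K}^{\times}$, the value $E_r(\zz)$ is transcendental over $\overline{K}$. The step I expect to be the main obstacle is the descent in the second paragraph — checking that the single rescaling by $c$ makes \emph{all} coefficients of $\psi$ simultaneously algebraic (so that algebraicity of the $J_\ell$ is precisely the right input), together with the mild but necessary bookkeeping of $(q-1)$-st and $(q^r-1)$-st roots of unity needed to see that the constant relating $\tilde{\pi}^{-2}w_rQ_{11}$ to $E_r(\zz)$ is algebraic and nonzero.
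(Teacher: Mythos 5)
Your proposal is correct and follows essentially the same route as the paper: rescale the lattice so the resulting CM Drinfeld module is defined over $\overline{K}$ with $r$-th coefficient $(-1)^{r-1}$ (the paper does this via Hamahata's explicit coefficient formula, you via algebraicity of the $J_\ell$), transport the quasi-periodic functions along the isomorphism, rewrite $E_r(\zz)$ as (a root of unity times) $\tilde{\pi}^{-2}w_rQ_{11}$ using Theorem~\ref{T:EGEK}(ii) and \eqref{E:det6}, and conclude with Proposition~\ref{Prop:Transcendence_of_Special_Values}. Your extra bookkeeping of the $\FF_q^{\times}$-constant $\zeta$ is harmless and if anything slightly more careful than the paper's statement $h_r(\zz)=(w/\tilde{\pi})^{1+\cdots+q^{r-1}}$.
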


    \begin{proof}
         The proof uses Proposition \ref{Prop:Transcendence_of_Special_Values} and an idea of Chang \cite[Thm. 2.2.1]{Cha11}. Let us set $\mathbf{z}=(z_1\dots,z_r)^{\intercal}\in\Omega^r$ which, by assumption, is a CM point.
        We choose an element $w\in\mathbb{C}_\infty^{\times}$ so that $\Delta_r(\mathbf{z})w^{1-q^r}=(-1)^{r-1}$. Then by \cite[Prop. 6.2]{Ham03}, the Drinfeld module $$\phi_\theta^{w\zz}:=w\phi_\theta^\mathbf{z}w^{-1}=\theta+\epsilon^{1-q} J_1(\mathbf{z})^{\frac{q^{\gcd(1,r)}-1}{q^r-1}}\tau+\cdots+\epsilon^{1-q^{r-1}}J_{r-1}(\mathbf{z})^{\frac{q^{\gcd(r-1,r)}-1}{q^r-1}}\tau^{r-1}+(-1)^{r-1}\tau^r$$
        is defined over $\overline{K}$ with the period lattice $w\zz A$ where $\epsilon=(-1)^{\frac{1-q^r}{r-1}}$ is a $(1-q^{r})$-th power of a fixed $(r-1)$-st root of $-1$. Since  $\Delta_r(\mathbf{z})=w^{q^r-1}(-1)^{r-1}$, we have by \eqref{E:det6} that $h_r(\mathbf{z})= (w/\tilde{\pi})^{1+\dots+q^{r-1}}$.  By the uniqueness of the function $F^{\phi^{\zz}}_{\delta_{i}}$, we obtain
        $F_{\delta_{i}}^{\phi^{w\zz}}(X)=w^{q^i}F_{\delta_{i}}^{\phi^{\zz}}(X/w)$ for any  $X\in \CC_{\infty}$. In particular, we have $w^{q^i}F_{\delta_{i}}^{\phi^{\zz}}(z_j)=F_{\delta_{i}}^{\phi^{w\zz}}(wz_j)$ for any $1\leq i \leq r-1$ and $1\leq j \leq r$. Then Theorem~\ref{T:EGEK}(ii) implies that
        \begin{align*}
            E_r(\mathbf{z})&=\tilde{\pi}^{-1+q+\dots+q^{r-1}}h_r(\zz)\det\begin{pmatrix}
		    F^{\phi^{\zz}}_{\delta_{1}}(z_2)&\dots &F^{\phi^{\zz}}_{\delta_{r-1}}(z_2)\\
		    \vdots & & \vdots \\
		    F^{\phi^{\zz}}_{\delta_{1}}(z_r)&\dots &F^{\phi^{\zz}}_{\delta_{r-1}}(z_r)
		    \end{pmatrix}\\
		    &=\tilde{\pi}^{-1+q+\dots+q^{r-1}}\Big(\frac{w}{\tilde{\pi}}\Big)^{1+q+\cdots+q^{r-1}}\frac{1}{w^{q+\cdots q^{r-1}}}\det\begin{pmatrix}
		    F^{\phi^{w\zz}}_{\delta_{1}}(wz_2)&\dots &F^{\phi^{w\zz}}_{\delta_{r-1}}(wz_2)\\
		    \vdots & & \vdots \\
		    F^{\phi^{w\zz}}_{\delta_{1}}(wz_r)&\dots &F^{\phi^{w\zz}}_{\delta_{r-1}}(wz_r)
		    \end{pmatrix}\\
		    &= \frac{w}{\tilde{\pi}^2}\det\begin{pmatrix}
		    F^{\phi^{w\zz}}_{\delta_{1}}(wz_2)&\dots &F^{w\zz}_{\delta_{r-1}}(wz_2)\\
		    \vdots & & \vdots \\
		    F^{\phi^{w\zz}}_{\delta_{1}}(wz_r)&\dots &F^{\phi^{w\zz}}_{\delta_{r-1}}(wz_r)
		    \end{pmatrix}.
        \end{align*}
Since $wz_r=w$, we get the desired result by Proposition  \ref{Prop:Transcendence_of_Special_Values}.
    \end{proof}

\end{document}